\theoremstyle{plain}
\newtheorem{theorem}{Theorem}[section]
\newtheorem{lemma}{Lemma}[section]
\newtheorem{corollary}{Corollary}[section]
\newtheorem{proposition}{Proposition}[section]
\theoremstyle{definition}
\newtheorem{definition}{Definition}[section]
\newtheorem{example}{Example}[section]
\theoremstyle{remark}
\newtheorem{remark}{Remark}[section]
\numberwithin{equation}{section}
\crefname{pluralequation}{Eqs.}{Eqs.}
\Crefname{pluralequation}{Eqs.}{Eqs.}
\newcommand{\lb}{[\![}
\newcommand{\rb}{]\!]}
\newcommand{\Cour}[1]{\lb #1\rb}
\newcommand{\la}{\langle}
\newcommand{\ra}{\rangle}
\newcommand{\mf}{\mathfrak}
\newcommand{\mbb}{\mathbb}
\newcommand{\mbf}{\mathbf}
\newcommand{\mc}{\mathcal}
\newcommand{\on}{\mathsf}
\newcommand{\ol}{\overline}
\newcommand{\Lied}{\mathcal{L}}
\newcommand{\ad}{\mathbf{ad}}
\newcommand{\Ad}{\mathbf{Ad}}
\newcommand{\g}{\mathfrak{g}}
\newcommand{\h}{\mathfrak{h}}
\newcommand{\gr}{\on{gr}}
\newcommand{\ann}{\on{ann}}
\newcommand{\sect}{\mbf{\Gamma}}
\renewcommand{\d}{{\mbox{d}}}
\newcommand{\Hom}{\operatorname{Hom}}
\newcommand{\mmat}[2][3em]{\matrix (#2) [matrix of math nodes, row sep=#1,
  column sep=#1, text height=1.5ex, text depth=0.25ex]}
\tikzset{node distance=2cm, auto}
\title[Symplectic and Poisson Geometry of Moduli Spaces]{Symplectic and Poisson geometry of the moduli spaces of flat connections over quilted surfaces.}
\author{David Li-Bland}
\author{Pavol \v{S}evera}
\address{Department of Mathematics, University of California, Berkeley}
\email{libland@math.berkeley.edu}
\address{Department of Mathematics, Universit\'{e} de Gen\`{e}ve, Geneva, Switzerland, on leave
from Dept. of Theoretical Physics, FMFI UK, Bratislava, Slovakia}
\email{pavol.severa@gmail.com}
\thanks{D.L-B. was supported by the National Science Foundation under Award No. DMS-1204779.
\\
P.\v S. was partially supported by the Swiss National Science Foundation (grants 140985 and
141329).}
\begin{document}
\begin{abstract}
In this paper we study the symplectic and Poisson geometry of moduli spaces of flat connections over \emph{quilted surfaces}. These are surfaces where the structure group varies from region to region in the surface, and where a reduction (or relation) of structure occurs along the boundaries of the regions. Our main theoretical tool is a new form moment-map reduction in the context of Dirac geometry. This reduction framework allows us to extend the results of \cite{LiBland:2012vo,Severa:2011ug} to allow more general relations of structure groups, and to investigate both the symplectic and Poisson geometry of the resulting moduli spaces from a unified perspective.

The moduli spaces we construct in this way include a number of important examples, including Poisson Lie groups and their Homogeneous spaces, moduli spaces for meromorphic connections over Riemann surfaces (following the work of Philip Boalch), and various symplectic groupoids. Realizing these examples as moduli spaces for \emph{quilted surfaces} provides new insights into their geometry.
 
\end{abstract}
\maketitle

\tableofcontents

\section{Introduction and Summary of Results}
Suppose that $G$ is a Lie group whose Lie algebra, $\g$, is endowed with a $G$-invariant inner product, $\la\cdot,\cdot\ra$. Suppose that $\Sigma$ is a closed oriented surface, and $P\to \Sigma$ is a principal $G$-bundle. Let $\mc{A}_{flat}(P\to\Sigma)$ denote the space of flat connections on $P$.
Atiyah and Bott \cite{Atiyah:1983dt} showed that the moduli space
$$\mc{M}(P\to\Sigma):=\mc{A}_{flat}(P\to\Sigma)/\text{Aut}(P)$$
 of flat connections on $P$ carries a symplectic structure. Their construction involves infinite dimensional symplectic reduction. 
 Somewhat later, Alekseev, Malkin, and Meinrenken introduced quasi-Hamiltonian geometry \cite{Alekseev97}, equipping it with a toolkit of fusion and reduction operations, in order to provide a finite dimensional construction of this moduli space. 
 Boalch \cite{Boalch:2009tn} enlarged the quasi-Hamiltonian toolkit, introducing the fission operation, which enables a finite dimensional construction of the moduli space of flat connections with prescribed irregular singularities. Interestingly, this new fission operation also allowed Boalch to associate Poisson/sympletic/quasi-Hamiltonian spaces of connections to surfaces with different structure groups in different regions. Moreover, these techniques enabled Boalch to interpret additional Poisson spaces, including examples of Poisson Lie groups \cite{Boalch:2001fw,Boalch:2001gw,Boalch:2009tn,Boalch:2011vt,Boalch:2007ty,Boalch:2001gw} and Lu-Weinstein double symplectic groupoids \cite{Boalch:2009tn,Boalch:2011vt,Boalch:2007ty}, as moduli spaces for connections.
 
 In this paper we expand the quasi-Hamiltonian toolkit further. First we introduce a slight generalization of group-valued moment maps, so that the moduli space on a surface with several marked points on every boundary component is equipped with such a moment map.
 
 Next, we subsume the quasi-Hamiltonian toolkit, consisting of reduction, fusion, and fission, into a single broad generalization of reduction. In particular, the moduli space for a triangulated surface is obtained via reduction from the moduli spaces for the triangles.
 
 Consequently, we are able to construct symplectic structures on moduli spaces for: 
 \begin{itemize}
 \item surfaces with boundary, where segments of the boundary are labelled by coisotropic subalgebras of $\g$ (generalizing some results found in \cite{Severa:2011ug}),
 \item surfaces with domains labelled by distinct structure groups and domain walls labelled by coisotropic relations between the structure groups - also called \emph{quilted surfaces} (generalizing some results found in \cite{Boalch:2009tn,Boalch:2007ty,Boalch:2011vt}),
 \item branched surfaces, where the branch locus is labelled by a coisotropic interaction between the branches (generalizing some results found in \cite{Boalch:2009tn,Boalch:2007ty,Boalch:2011vt}).
 \end{itemize}
 Even more generally, our techniques may be used to produce Poisson structures, and a natural generalization of quasi-Hamiltonian and quasi-Poisson structures.
 
 
 As a result, we are able to construct of a number of well known spaces including: Lu's symplectic double groupoid integrating a Poisson Lie group \cite{thesis-3}, Boalch's Fission spaces \cite{Boalch:2009tn,Boalch:2011vt}, Poisson Lie groups \cite{Drinfeld83,Semenov-Tian-Shansky85}, and Poisson homogeneous spaces \cite{Lu06}, among others. Our approach builds upon the results and ideas of various authors including Fock and Rosly, Boalch, and the second author \cite{Fock:1999wz,Boalch:1999wk,Boalch:2001gw,Boalch:2001fw,Boalch:2011vt,Boalch:2007ty,Boalch:2009tn,Severa:2011ug,Severa98,Severa:2005vla}.

Some of these results appeared in \cite{LiBland:2012vo}, where the (quasi-)Poisson structures on moduli spaces are constructed in terms of an intersection pairing. Here we present the reduction theorems in full generality (unifying both the twists and reductions found in (quasi-)Poisson geometry)  and with an emphasis on symplectic structures. We also formulate the results in more natural way, as morphisms of Manin pairs. Among the morphisms of Manin pairs, we introduce the class of \emph{exact morphisms}, corresponding to (quasi-)symplectic structures.

 \subsection{Notation and terminology}\label{sec:Notation}
 At this point, we would like to introduce some notation. 
Suppose $V_i$ is a family of vector spaces (or manifolds) indexed by a set $I$ and $f:J\to I$ is a map. We use the notation 
\begin{align*}
f^!:\prod_{i\in I}V_i&\to \prod_{j\in J}V_{f(j)}\\
\{v_i\}_{i\in I}&\mapsto \{v_{f(j)}\}_{j\in J}
\end{align*} for the induced pull-back map.

 
 
 For any oriented graph $\Gamma$, we let $E_\Gamma$ denote the set of edges, $V_\Gamma$ the set of vertices and $\on{in},\on{out}:E_\Gamma\to V_\Gamma$ the incidence maps.   $\Gamma$ is called a \emph{permutation graph}\footnote{Such graphs are also called \emph{directed cycle graphs} in the literature.} if both $\on{in}$ and $\on{out}$ are bijections.

A \emph{quadratic Lie algebra} is a Lie algebra endowed with an invariant non-degenerate symmetric pairing.

 To simplify our presentation, we will assume that the Lie group $G$ is connected throughout this paper. The generalization to disconnected Lie groups is straightforward. 
 \subsection{The construction}\label{sec:ConstIntro}
\subsubsection{Motivating example: The symplectic form from a triangulation}

Let $\Sigma$ be a closed oriented surface and let
$$\mathcal M_\Sigma(G)=\Hom(\pi_1(\Sigma),G)/\Ad(G)$$
be the moduli space of flat connections. Let us recall how to compute the Atiyah-Bott symplectic form $\omega$ on $\mathcal M_\Sigma(G)$ in terms of a triangulation of $\Sigma$.

Let $\mathcal T$ be a triangulation of $\Sigma$. Let $\mathcal T_0$ denote the set of its vertices, $\mathcal{T}_1$ the set of (unoriented) edges and $\mathcal T_2$ the set of triangles. We let $\tilde{\mc{T}_1}$ denote the set of oriented edges (we thus have a 2-to-1 map $\tilde{\mc T}_1\to\mc T_1$)  we let $$(e\to \bar e):\tilde{\mc{T}_1}\to \tilde{\mc{T}_1}$$ denote the map which reverses the orientation of the edges.

 \begin{figure}
\begin{center}
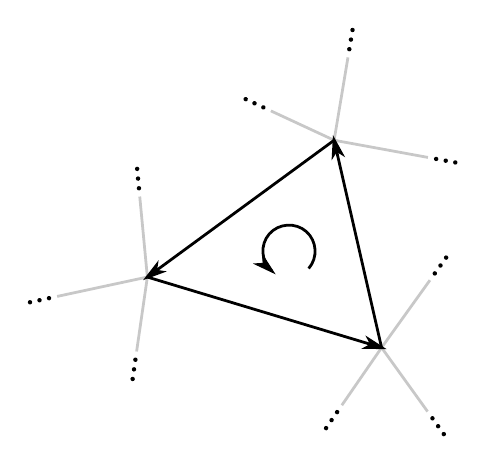
\caption{\label{fig:TriInc} In the figure, $t\in\mc{T}_2$ is a triangle, $e_1,e_2,e_3\in\tilde{\mc{T}_1}$ are oriented edges, and $v_1,v_2,v_3\in\mc{T}_0$ are vertices.
We have $\partial t=\{e_1,e_2,e_3\}$, and $v_2=\on{in}(e_1)$ and $v_1=\on{out}(e_1)$.}
\end{center}
\end{figure}

 Let $\mathcal A_{flat}(\mathcal T)$ be the space of ``combinatorial flat connections'' on $\Sigma$:
$$\mc{A}_{flat}(\mc{T})=\{ g\in G^{\tilde{\mc{T}_1}}\mid g_{\bar e}=g_{e}^{-1}\text{ for all }e\in\tilde{\mc{T}}_1,\text{ and }\prod_{e\in\partial t} g_e=1 \text{ for all }t\in\mc{T}_2\},$$
here $\partial t\subset\tilde{\mc{T}_1}$ denotes the oriented boundary and
 the product is taken in the natural (cyclic) order (cf. \cref{fig:TriInc}). We have an action of $G^{\mc T^0}$ on $\mc{A}_{flat}(\mc{T})$ by ``gauge transformations''
\begin{equation}\label{eq:ResGTransIntro}(g'\cdot g)_e=g'_{\on{in}(e)}g_e(g'_{\on{out}(e)})^{-1},\quad g'\in G^{\mc{T}_0},\quad g\in G^{\tilde{\mc{T}_1}}\end{equation}
and 
$$\mathcal M_\Sigma(G)=\mc{A}_{flat}(\mc{T})/G^{\mc T^0}.$$

If $t$ is an oriented triangle with edges $e_1,e_2,e_3$ (in their cyclic order), let
\begin{equation}\label{eq:MtIntro}
\mc M_t(G)=\{(g_{e_1},g_{e_2},g_{e_3})\in G\times G\times G\mid g_{e_1}g_{e_2}g_{e_3}=1\}.
\end{equation}
We have an inclusion
$$i:\mc{A}_{flat}(\mc{T})\subset \prod_{t\in{\mc T_2}}\mc M_t(G),$$
where the subset $\mc{A}_{flat}(\mc{T})$ is given by the condition $g_e=g_{\bar e}^{-1}$.

Let
$$\omega_t=\frac{1}{2}\langle g_{e_2}^{-1}\d g_{e_2}^{\vphantom{-1}},\d g_{e_1}^{\vphantom{-1}}\,g_{e_1}^{-1}\rangle\in\Omega^2(\mc M_t(G)).$$
The 2-form $\omega_t$ is invariant under cyclic permutations of the edges. 

The symplectic form $\omega$ on $\mc M_\Sigma(G)$ is given by
\begin{equation}\label{eq:omegaFromTriangIntro}
p^*\omega=i^*\sum_{t\in\mc T_2}\omega_t,
\end{equation}
where $p:\mc{A}_{flat}(\mc{T})\to\mathcal M_\Sigma(G)$ is the projection \cite{Weinstein:1995uf}.

We shall interpret Equation \eqref{eq:omegaFromTriangIntro} in the following way: $\mathcal M_\Sigma(G)$ is obtained from $\prod_{t\in{\mc T_2}}\mc M_t(G)$ by a variant of Hamiltonian reduction. The subset $\mc{A}_{flat}(\mc{T})\subset \prod_{t\in{\mc T_2}}\mc M_t(G)$ is given by a moment map condition, and then we need to take the quotient by the residual group $G^{\mc T^0}$ to get a symplectic manifold. To do it, we need to explain this (quasi-)Hamiltonian reduction and the (quasi-)Hamiltonian structure on $\mc M_t(G)$.

\subsubsection{Quasi-Hamiltonian reduction} Let $\mf d$ be a quadratic Lie algebra and $\h\subset\mf d$ a Lagrangian subalgebra (i.e.\ $\h^\perp=\h$). In other words, $(\mf d,\h)$ is a Manin pair.

 Suppose that $\mf d$ acts on a manifold $N$ so that all the stabilizers are coisotropic Lie subalgebras of $\mf d$. We shall recall below the following notions (introduced by Alekseev, Malkin and Meinrenken in \cite{Alekseev97} and by Alekseev, Kosmann-Schwarzbach and Meinrenken in \cite{Alekseev00}, slightly generalized in this paper):
\begin{itemize}
\item A \emph{quasi-Hamiltonian $(\mf d,\h)\times N$-manifold} (or quasi-Hamiltonian $\h$-mani\-fold, if $\mf d$ and $N$ are clear from the context) is a manifold $M$ with an action of $\h$, an $\h$-equivariant map $\mu:M\to N$ (\emph{moment map}), and a bivector field  $\pi$ on $M$, satisfying certain conditions.\footnote{Strictly speaking the bivector field $\pi$ depends in an inessential way on a choice of a vector space complement $\mf{k}\subset\mf{d}$ to $\h$, as in \cite{Alekseev99}. Similarly, in the exact case, the 2-form depends in an inessential way on some other choice. These choices can be made canonically in our cases of interest, and so we will ignore this subtlety until \cref{sec:Q-HamMflds}.}
\item Among the moment maps there are \emph{exact moment maps}. In this case the bivector field $\pi$ can be replaced by a 2-form ($M$ is ``quasi-symplectic'').
\end{itemize}

One of our main results is the following reduction theorem:

\begin{theorem}\label{thm:reductionIntro}
Let $M$ be a quasi-Hamiltonian $(\mf d,\h)\times N$-manifold, $\mf l\subset\mf d$ a Lagrangian Lie subalgebra, and $S\subset N$ an $\mf l$-invariant submanifold.
\begin{enumerate}
\item There is a natural Poisson bracket on the algebra $C^\infty(M)^{\mf l\cap\mf h}\subset C^\infty(M)$ of $\mf l\cap\mf h$-invariant functions. In particular, if $M/(\mf l\cap\mf h)$ is a manifold, it is a Poisson manifold.
\item The ideal $I\subset C^\infty(M)^{\mf l\cap\mf h}$ of functions vanishing on $\mu^{-1}(S)$ is a Poisson ideal. In particular, $\mu^{-1}(S)/(\mf l\cap\mf h)$ is a Poisson manifold, provided it is a manifold.
\item If the moment map $\mu$ is exact and $S$ is an $\mf l$-orbit then the Poisson manifold $\mu^{-1}(S)/(\mf l\cap\mf h)$ is symplectic.
\end{enumerate}
\end{theorem}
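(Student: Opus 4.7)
My plan is to exploit the freedom in the quasi-Hamiltonian structure to choose a vector-space complement $\mf k$ of $\mf h$ in $\mf d$ that is well-adapted to $\mf l$. Since $\mf l\cap \mf h$ is isotropic, I can select a Lagrangian complement $\mf k\subset \mf d$ to $\mf h$ which contains a vector-space complement $\mf n$ of $\mf l\cap \mf h$ inside $\mf l$; in particular $\mf l = (\mf l\cap \mf h)\oplus \mf n$ with $\mf n\subset \mf k$. The bivector $\pi$ specified by this choice is then aligned with the geometry of $\mf l$, and each of the three assertions follows by tracking how the Lagrangian condition on $\mf l$ kills the various quasi-Poisson defects.

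For Part 1, I would set $\{f,g\} := \pi(df,dg)$ for $f,g\in C^\infty(M)^{\mf l\cap \mf h}$. Closure under the bracket uses the $\mf h$-equivariance of $\pi$ together with invariance of $f,g$. The Jacobi identity reduces to the vanishing of $\phi_M(df, dg, dh)$ for invariant $f,g,h$, where $\tfrac{1}{2}[\pi,\pi] = \phi_M$ and $\phi \in \wedge^3(\mf d/\mf h) \simeq \wedge^3 \mf k$ is the canonical element induced by the Cartan 3-form of $\mf d$. For $(\mf l\cap \mf h)$-invariant functions, the differentials annihilate the $(\mf l\cap \mf h)$-action directions, so only the $\wedge^3 \mf n$-component of $\phi_M$ contributes; this piece is the image of the Cartan 3-form restricted to $\wedge^3 \mf l$, which vanishes because $\mf l$ is a Lagrangian subalgebra (the pairing $\langle [x,y], z\rangle$ is zero for $x,y,z\in\mf l$).

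For Part 2, I would show that the Hamiltonian vector field $X_g = \pi^\sharp(dg)$ of an $(\mf l\cap \mf h)$-invariant function $g$ is tangent to $\mu^{-1}(S)$. The moment-map axiom identifies $d\mu \circ \pi^\sharp$ with the infinitesimal $\mf d$-action on $N$, twisted by the chosen complement $\mf k$; the invariance of $g$ together with the adapted choice of $\mf k$ forces the corresponding element of $\mf d$ to lie in $\mf l$, so its image in $TN$ is tangent to $S$ since $S$ is $\mf l$-invariant. Combined with Part 1 this shows that the ideal of functions vanishing on $\mu^{-1}(S)$ is a Poisson ideal. For Part 3, in the exact case $\pi$ is replaced by a 2-form $\omega$ satisfying $d\omega = -\mu^*\eta$ for a specific 3-form $\eta$ on $N$; the pullback of $\eta$ to any $\mf l$-orbit vanishes (again using that $\mf l$ is Lagrangian), so $\omega|_{\mu^{-1}(S)}$ is closed and descends to a closed 2-form on the quotient.

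The hardest step is non-degeneracy in Part 3: showing that the kernel of $\omega|_{\mu^{-1}(S)}$ coincides with the tangent distribution of the $(\mf l\cap \mf h)$-action. This is a dimension count balancing the coisotropy condition on the $\mf d$-stabilizers in $N$, the Lagrangian properties of both $\mf h$ and $\mf l$, and the assumption that $S$ is a single $\mf l$-orbit rather than merely $\mf l$-invariant. Tracing through the pairings to confirm that these constraints exactly cut out the $(\mf l\cap \mf h)$-directions is the crux of the argument and is where every hypothesis of the theorem is used in an essential way.
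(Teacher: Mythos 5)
Your Parts (1) and (2) are essentially sound, and they take a genuinely different route from the paper: the paper deduces the whole theorem from reduction of morphisms of Manin pairs (\cref{thm:PartRed,thm:ExactPartRed}, proved in the appendix by composing Courant relations), whereas you work directly with the quasi-Poisson bivector attached to a Lagrangian complement. Your key device does exist: an isotropic complement $\mf{n}$ of $\mf l\cap\h$ in $\mf l$ meets $\h$ trivially, and can be enlarged to a Lagrangian complement $\mf{k}\supseteq\mf{n}$ of $\h$ by passing to $\mf{n}^\perp/\mf{n}$. With this choice the component of $\tfrac12[\pi,\pi]$ seen by $(\mf l\cap\h)$-invariant functions is governed by $\la[\cdot,\cdot],\cdot\ra$ restricted to $\mf{n}\subseteq\mf l$, which vanishes because $\mf l$ is a Lagrangian subalgebra; the tangency argument in (2) is correct and is essentially the computation underlying \cref{thm:BivPartRed}. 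One point to state more carefully: $\pi$ is not $\h$-invariant ($\Lied_{\rho(\xi)}\pi$ is the action of the cobracket of $\xi$), so closure of the bracket on invariant functions is not "equivariance plus invariance" but requires the same mechanism — the relevant component of the cobracket of $\xi\in\mf l\cap\h$ is again a pairing of elements of $\mf l$ and vanishes. What the paper's route buys is that (3) then comes for free from preservation of exactness; what yours buys is an explicit bivector formula on the quotient.

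Part (3), however, has a genuine gap. You assert that the curvature $3$-form pulls back to zero on any $\mf l$-orbit $S$ because $\mf l$ is Lagrangian, so that $\omega|_{\mu^{-1}(S)}$ is closed and descends. This is false in general: the computation of $\la\Cour{s(X),s(Y)},s(Z)\ra$ only gives zero when the splitting satisfies $s(TS)\subseteq\mf l$, which is a special property of $S$ (it is exactly the condition isolated in \cref{rem:SymmetricOrbits}, where it follows from symmetry of the subalgebra), not a consequence of $\mf l$ being Lagrangian. Already for $\mf d=\bar\g\oplus\g$ acting on $N=G$ with $\mf l=\g_\Delta$ and $S$ a conjugacy class, the Cartan $3$-form does not restrict to zero on $S$, and the paper's own example on conjugacy classes computes the required correction term $\tfrac12\mu^*\la\theta,s\circ\mbf{a}(\theta)\ra$ to be (proportional to) the nonzero AMM $2$-form on $S$. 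The correct reduced $2$-form is $i^*\omega-\tfrac12\mu^*\la\theta,s\circ\mbf{a}(\theta)\ra$ for a connection $\theta\in\Omega^1(S,\mf l)$ on $S\to S/\mf l$ (\cref{rem:cLag2Form}); without this correction the restricted form is in general neither closed nor basic. Finally, the nondegeneracy of the reduced form — which you correctly identify as the crux — is only announced as a dimension count and never carried out; in the paper this is precisely the content of \cref{thm:ExactPartRed} (exactness of the reduced Courant algebroid plus surjectivity of the anchor on $R_{\mf c,S}$, with the kernel controlled by \cref{lem:EstMMP}). As written, Part (3) is not proved.
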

More generally, if in place of the Lagrangian subalgebra $\mf l$ we use a coisotropic subalgebra, we have a similar result, where the reduced manifold is still quasi-Hamiltonian. This result is contained in \cref{thm:ExactPartRed,thm:PartRed}, expressed in the more appropriate language of morphisms of Manin pairs.

\subsubsection{The quasi-Hamiltonian structure on moduli spaces}\label{sec:qHamIntro}
Let $e$ be an (abstract) oriented edge, let $N_e=G$ and ${\mf d_e}=\bar\g\oplus\g$, where $\bar \g$ is $\g$ with the inner product negated. The corresponding group $D_e=G\times G$ acts on $N_e=G$ via
\begin{equation}\label{eq:resGaugTransEdgeIntro}
(g_1,g_2)\cdot g=g_1\,g\,g_2^{-1}.
\end{equation}
$N_e=G$ should be imagined as the space of possible holonomies along $e$, and the action of $D_e=G\times G$ as gauge transformations at the endpoints of $e$.

Let $\Sigma$ be a compact oriented surface and $V\subset\partial\Sigma$ a finite subset such that every component of both $\Sigma$ and $\partial\Sigma$ intersects $V$ non-trivially. We shall call $(\Sigma,V)$ a \emph{marked surface}. The boundary circles  of $\Sigma$  are cut into a sequence of oriented edges with endpoints in $V$. Together these edges and vertices form a permutation graph $\Gamma$, the \emph{boundary graph} of $(\Sigma,V)$ (cf. \cref{fig:SurfWBoundInt}).
 Let $\Pi_1(\Sigma,V)$ denote the fundamental groupoid of $\Sigma$ with the base set $V$. Let
$$\mathcal M_{\Sigma,V}(G)=\Hom(\Pi_1(\Sigma,V),G)$$
be the moduli space of flat connections on $G$-bundles over $\Sigma$ trivialized at $V$. This moduli space is quasi-Hamiltonian in the following way:

\begin{figure}
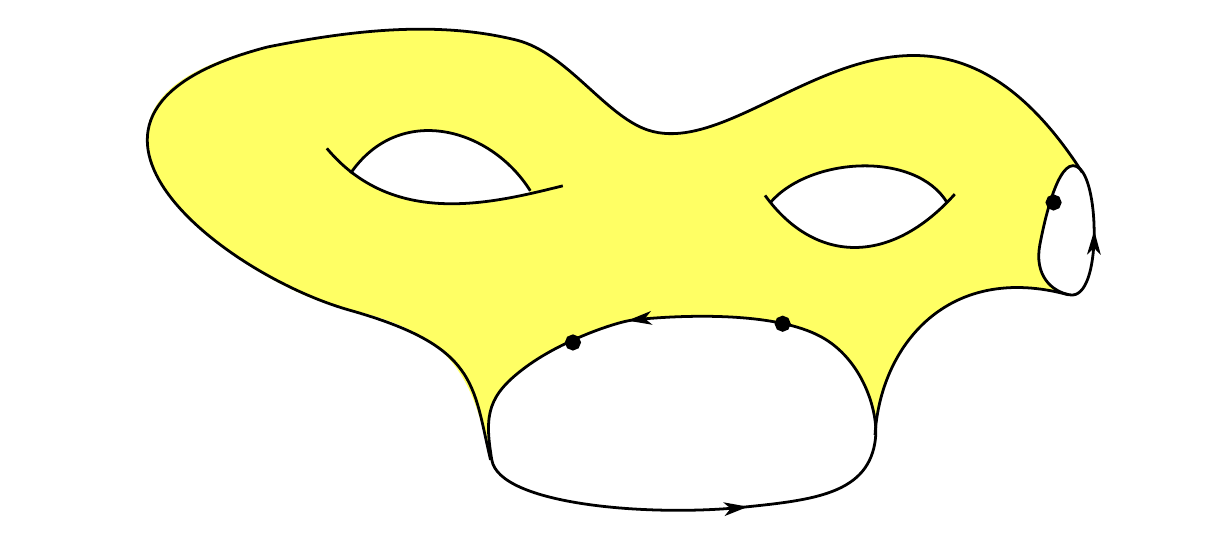
\caption{
\label{fig:SurfWBoundInt} The marked surface $(\Sigma,V)$, with $V=\{v_1,v_2, v_3\}$. The boundary graph $\Gamma$ has edges $E_\Gamma=\{e_1,e_2,e_3\}$ and vertices $V_\Gamma=V$.}
\end{figure}

 We have an  action of the group $H=G^V$ on $\mathcal M_{\Sigma,V}(G)$ by (residual) gauge transformations,
 $$(h\cdot f)(e)=h^{\phantom{-1}}_{\on{in}(e)}f(e)h_{\on{out}(e)}^{-1},$$ for  $h\in G^V,$  $f\in \mc{M}_{\Sigma,V}(G),$ and $ e\in\Pi_1(\Sigma, V).$
  We also have a map
$$\mu:\mathcal M_{\Sigma,V}(G)\to N:=\prod_{e\in E_\Gamma} N_e.$$
where the components of $\mu$ are given by $$\mu(f)_e=f(e)$$ (in other words, $\mu$ is the list of holonomies along the boundary arcs). Notice that the map $\mu$ is $H$-equivariant, where $H=G^V$ embeds as a subgroup
$$G^V\subseteq D:=\prod_{e\in E_\Gamma} D_e=\prod_{e\in E_\Gamma}(G\times G).$$
Here $g\in G^V$ is included as the element $\prod_{e\in E_\Gamma}(g_{\on{in}(e)},g_{\on{out}(e)})$. Letting $\mf{d}$ and $\h$ denote the Lie algebras of $D$ and $H$, we have:
\begin{theorem}\label{thm:qhamIntro}
There is a natural $(\mf d,\mf h)\times N$-quasi-Hamiltonian structure on $\mathcal M_{\Sigma,V}(G)$ with the moment map $\mu$. The moment map is exact and the quasi-symplectic form $\omega$ on $\mathcal M_{\Sigma,V}(G)$ is given by the formula \eqref{eq:omegaFromTriangIntro}, where $\mc T$ is any triangulation of $\Sigma$ such that $\mc T_0\cap\partial\Sigma=V$.
\end{theorem}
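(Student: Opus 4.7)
The plan is to construct the structure on $\mc M_{\Sigma,V}(G)$ by applying the partial-reduction extension of \cref{thm:reductionIntro} (announced in the paragraph immediately following it) to the product of triangle spaces associated to a triangulation $\mc T$ with $\mc T_0\cap\partial\Sigma=V$. First I would verify directly that for each oriented triangle $t$, the space $\mc M_t(G)$ of \eqref{eq:MtIntro} carries an exact $(\mf d_t,\mf h_t)\times N_t$-quasi-Hamiltonian structure with $N_t=G^{\partial t}$, $\mf d_t=(\bar\g\oplus\g)^{\partial t}$, $\mf h_t=\g^{V(t)}$ acting by \eqref{eq:resGaugTransEdgeIntro} at the three corners, moment map equal to the triple of edge holonomies, and 2-form $\omega_t$. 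All quasi-Hamiltonian axioms reduce to Maurer--Cartan identities on $G$; cyclic symmetry of $\omega_t$ follows using $g_{e_1}g_{e_2}g_{e_3}=1$ together with invariance of $\la\cdot,\cdot\ra$.

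\textbf{Gluing via reduction.} The product $\prod_{t\in\mc T_2}\mc M_t(G)$ is then quasi-Hamiltonian with 2-form $\sum_t\omega_t$, ambient algebra $\mf d_{\mc T}=\prod_{t,e\in\partial t}(\bar\g\oplus\g)$, symmetry algebra $\mf h_{\mc T}=\g^{\text{corners}}$, and target $N_{\mc T}=\prod_{t,e\in\partial t}G$. Next I would apply partial reduction along a coisotropic subalgebra $\mf c\subset\mf d_{\mc T}$ which, on each interior unoriented edge of $\mc T$, identifies the two adjacent summands of $\bar\g\oplus\g$ via $(X,Y)\sim(-Y,-X)$, and at each interior vertex $v\in\mc T_0\setminus V$ restricts to the diagonal $\g\hookrightarrow\g^{\{\text{corners at }v\}}$; the associated submanifold $S\subset N_{\mc T}$ cuts out the matching conditions $g_{\bar e}=g_e^{-1}$ along interior edges. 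On the boundary edges and at marked points $\mf c$ is trivial, so the ambient data there survives the reduction as $(\mf d,\mf h,N)=(\prod_{e\in E_\Gamma}(\bar\g\oplus\g),\g^V,\prod_{e\in E_\Gamma}G)$ — precisely the data appearing in the theorem.

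\textbf{Identification.} A point of the reduced fibre corresponds to a system of corner holonomies satisfying $g_{\bar e}=g_e^{-1}$ on every edge and $\prod_{e\in\partial t}g_e=1$ on every triangle, modulo interior gauge at $\mc T_0\setminus V$ — i.e.\ an element of $\Hom(\Pi_1(\Sigma,V),G)=\mc M_{\Sigma,V}(G)$, using the standard presentation of the fundamental groupoid from a triangulation. Under this identification the residual action of $\mf h=\g^V$ is gauge at marked points, the moment map is $\mu(f)=(f(e))_{e\in E_\Gamma}$, exactness is inherited from the triangle, and the reduced 2-form is the descent of $i^*\sum_t\omega_t$, reproducing \eqref{eq:omegaFromTriangIntro}. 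Independence of the resulting structure from the choice of $\mc T$ (subject to $\mc T_0\cap\partial\Sigma=V$) is then verified via the 2-2 edge flip and 1-3 vertex insertion Pachner moves relative to $V$, each reducing to a local identity between elementary reductions of triangle products.

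\textbf{Main obstacle.} The most delicate step is pinning down the coisotropic pair $(\mf c,S)$ precisely enough to be sure that partial reduction reproduces \emph{exactly} the $(\mf d,\mf h)\times N$-quasi-Hamiltonian structure of the theorem, with no spurious residual factors or quotients. Once this bookkeeping is settled, the explicit formula for $\omega$ and the exactness of $\mu$ follow from the corresponding statements on the triangle by functoriality of the reduction, and Pachner-move independence is a localized version of the same computation.
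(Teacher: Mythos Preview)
Your approach is essentially the paper's: build the structure on $\mc M_{\Sigma,V}(G)$ by reducing the product of triangles of a triangulation $\mc T$, and then argue Pachner invariance. But two points are worth flagging.

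First, the paper avoids both your direct verification of the triangle axioms \emph{and} your local Pachner computations by a single observation (\cref{prop:unPolyg}): for any disjoint union of polygons the moment map $\mu$ is an embedding, so by \cref{ex:EInvSubMMP} there is a \emph{unique} exact morphism of Manin pairs over $\mu$. This immediately gives the quasi-Hamiltonian structure on each $\mc M_t(G)$ without checking Maurer--Cartan identities, and it reduces Pachner invariance to a tautology: a $2$--$2$ flip or $1$--$3$ move happens inside a single square or triangle, and by uniqueness the two ways of sewing it must agree. This is genuinely cleaner than the explicit local identities you propose.

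Second, your description of the coisotropic $\mf c$ is slightly off. The ambient algebra $\mf d_{\mc T}$ is indexed by half-edges, not by vertices, so there is no place in $\mf c\subset\mf d_{\mc T}$ to impose ``the diagonal $\g\hookrightarrow\g^{\{\text{corners at }v\}}$'' at an interior vertex. In the paper's setup $\mf c$ encodes \emph{only} the edge-matching Lagrangians $\mf l_{sew}$ on interior edges (and is the full $\bar\g\oplus\g$ on boundary edges, not ``trivial''); the quotient by interior vertex gauge then comes out automatically as $\mf h\cap\mf c^\perp$, cf.\ \cref{lem:CorectDirStr}. Your ``main obstacle'' paragraph correctly identifies that this bookkeeping is the delicate point; once $\mf c$ is set up as pure edge-matching data, the residual $(\mf d,\mf h,N)$ you want drops out of \cref{thm:PartRed} with no spurious factors, and the 2-form formula \eqref{eq:omegaFromTriangIntro} follows from \cref{thm:PartRedSplEx} because $\mf l_{sew}$ is symmetric (\cref{rem:SymmetricOrbits}).
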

We prove this theorem in \cref{sec:SymplStrMMP}.

\begin{remark}
In the case where every boundary component of $\Sigma$ contains exactly one element of $V$, the theorem (except for the triangulation part) was proved by Alekseev, Malkin and Meinrenken in \cite{Alekseev97}, and became the motivation for quasi-Hamiltonian structures.
\end{remark}

\subsubsection{Reduction applied to moduli spaces}
We can combine Theorems \ref{thm:reductionIntro} and \ref{thm:qhamIntro} to produce Poisson and symplectic manifolds: we choose a collection $(\Sigma_i,V_i)$ of marked surfaces with boundary graphs $\Gamma_i$, and a collection $G_i$ of Lie groups with quadratic Lie algebras. The manifold 
$$\mc M:=\prod_i\mc M_{\Sigma_i,V_i}(G_i)$$
is quasi-Hamiltonian, with the moment map $\mu:\mc M\to N=\prod_i N_i$. We choose a Lagrangian Lie subalgebra $\mf l\subset\mf d$ and a $\mf l$-invariant submanifold $S\subset N$. Then by Theorem \ref{thm:reductionIntro}, if the transversality conditions are satisfied, the manifold
$$\mc M_{red}=\mu^{-1}(S)/(\mf l\cap\mf h)$$
is symplectic or Poisson.

The reduced manifold $\mc M_{red}$ can be again seen as a moduli space of flat connections, with certain boundary (or sewing) conditions. Below we shall give various examples for simple choices of $\mf l$ and $S$.

\begin{example}
As the first example, let $\Sigma$ be a closed surface with a triangulation $\mc T$. Let $(\Sigma',V')$ be the disjoint union of the triangles, with $V'$ consisting of the vertices, and let
$$\mc M=\mc M_{\Sigma',V'}(G)=\prod_{t\in\mc T_2}M_t(G).$$
in the notation of Eq.\ \eqref{eq:MtIntro}.

Let us now identify our data on a picture (showing just two triangles, with the parallel edges identified in $\Sigma$):
$$
\begingroup%
  \makeatletter%
  \providecommand\color[2][]{%
    \errmessage{(Inkscape) Color is used for the text in Inkscape, but the package 'color.sty' is not loaded}%
    \renewcommand\color[2][]{}%
  }%
  \providecommand\transparent[1]{%
    \errmessage{(Inkscape) Transparency is used (non-zero) for the text in Inkscape, but the package 'transparent.sty' is not loaded}%
    \renewcommand\transparent[1]{}%
  }%
  \providecommand\rotatebox[2]{#2}%
  \ifx\svgwidth\undefined%
    \setlength{\unitlength}{119.3527832bp}%
    \ifx\svgscale\undefined%
      \relax%
    \else%
      \setlength{\unitlength}{\unitlength * \real{\svgscale}}%
    \fi%
  \else%
    \setlength{\unitlength}{\svgwidth}%
  \fi%
  \global\let\svgwidth\undefined%
  \global\let\svgscale\undefined%
  \makeatother%
  \begin{picture}(1,1.49233148)%
    \put(0,0){\includegraphics[width=\unitlength]{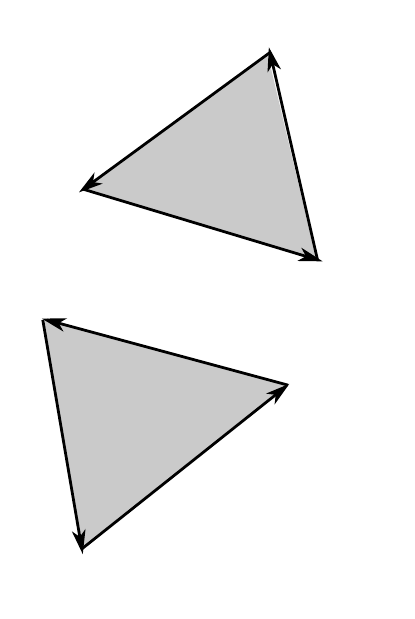}}%
    \put(0.39773979,1.21897215){\color[rgb]{0,0,0}\rotatebox{1.43928041}{\makebox(0,0)[b]{\smash{$G$}}}}%
    \put(0.53903506,1.08153094){\color[rgb]{0,0,0}\makebox(0,0)[b]{\smash{$t_1$}}}%
    \put(0.34731884,0.47793117){\color[rgb]{0,0,0}\rotatebox{-0.91143266}{\makebox(0,0)[b]{\smash{$t_2$}}}}%
    \put(0.21730555,0.9388681){\color[rgb]{0,0,0}\rotatebox{-0.43259527}{\makebox(0,0)[b]{\smash{$\mathfrak g$}}}}%
    \put(0.77093569,1.10339685){\color[rgb]{0,0,0}\rotatebox{1.43928041}{\makebox(0,0)[b]{\smash{$G$}}}}%
    \put(0.45887667,0.86087265){\color[rgb]{0,0,0}\rotatebox{1.43928041}{\makebox(0,0)[b]{\smash{$G$}}}}%
    \put(0.39951764,0.68788344){\color[rgb]{0,0,0}\rotatebox{1.43928041}{\makebox(0,0)[b]{\smash{$G$}}}}%
    \put(0.4860123,0.29272162){\color[rgb]{0,0,0}\rotatebox{1.43928041}{\makebox(0,0)[b]{\smash{$G$}}}}%
    \put(0.08576263,0.39448008){\color[rgb]{0,0,0}\rotatebox{1.43928041}{\makebox(0,0)[b]{\smash{$G$}}}}%
    \put(0.67013045,0.60483338){\color[rgb]{0,0,0}\rotatebox{-0.43259527}{\makebox(0,0)[b]{\smash{$\mathfrak g$}}}}%
    \put(0.2919284,0.13165685){\color[rgb]{0,0,0}\rotatebox{-0.43259527}{\makebox(0,0)[b]{\smash{$\mathfrak g$}}}}%
    \put(0.04431631,0.62857702){\color[rgb]{0,0,0}\rotatebox{-0.43259527}{\makebox(0,0)[b]{\smash{$\mathfrak g$}}}}%
    \put(0.81259214,0.89993275){\color[rgb]{0,0,0}\rotatebox{-0.43259527}{\makebox(0,0)[b]{\smash{$\mathfrak g$}}}}%
    \put(0.56328413,1.35784552){\color[rgb]{0,0,0}\rotatebox{-0.43259527}{\makebox(0,0)[b]{\smash{$\mathfrak g$}}}}%
    \put(0.68200216,0.80834029){\color[rgb]{0,0,0}\rotatebox{-0.43259527}{\makebox(0,0)[b]{\smash{$\bar{\mathfrak g}$}}}}%
    \put(0.73288139,1.30017252){\color[rgb]{0,0,0}\rotatebox{-0.43259527}{\makebox(0,0)[b]{\smash{$\bar{\mathfrak g}$}}}}%
    \put(0.19525789,1.08647991){\color[rgb]{0,0,0}\rotatebox{-0.43259527}{\makebox(0,0)[b]{\smash{$\bar{\mathfrak g}$}}}}%
    \put(0.18169014,0.74558931){\color[rgb]{0,0,0}\rotatebox{-0.43259527}{\makebox(0,0)[b]{\smash{$\bar{\mathfrak g}$}}}}%
    \put(0.12741898,0.19948597){\color[rgb]{0,0,0}\rotatebox{-0.43259527}{\makebox(0,0)[b]{\smash{$\bar{\mathfrak g}$}}}}%
    \put(0.68200219,0.46744968){\color[rgb]{0,0,0}\rotatebox{-0.43259527}{\makebox(0,0)[b]{\smash{$\bar{\mathfrak g}$}}}}%
  \end{picture}%
\endgroup%

$$
The Lie algebra $\mf d$ is the direct sum of all the $\g$'s and $\bar\g$'s, situated at the half-edges of the triangles. $N$ is the product of all $G$'s. The Lie algebra $\h\subset\mf d$ is the direct sum of all the diagonal Lie subalgebras, $\g_\Delta\subset\g\oplus\bar\g$, situated at the vertices of the triangles. Let the Lie algebra $\mf l\subset\mf d$ be the direct sum of all the diagonals $\g_\Delta\subset\g\oplus\bar\g$ situated at the pairs of half-edges that are identified in $\Sigma$. Notice that $\h\cap\mf l=\g^{\mc T_0}$.
\begin{center}
\begin{tabular}{p{.45\linewidth}p{.45\linewidth}}
\begin{center}
\begingroup%
  \makeatletter%
  \providecommand\color[2][]{%
    \errmessage{(Inkscape) Color is used for the text in Inkscape, but the package 'color.sty' is not loaded}%
    \renewcommand\color[2][]{}%
  }%
  \providecommand\transparent[1]{%
    \errmessage{(Inkscape) Transparency is used (non-zero) for the text in Inkscape, but the package 'transparent.sty' is not loaded}%
    \renewcommand\transparent[1]{}%
  }%
  \providecommand\rotatebox[2]{#2}%
  \ifx\svgwidth\undefined%
    \setlength{\unitlength}{119.3527832bp}%
    \ifx\svgscale\undefined%
      \relax%
    \else%
      \setlength{\unitlength}{\unitlength * \real{\svgscale}}%
    \fi%
  \else%
    \setlength{\unitlength}{\svgwidth}%
  \fi%
  \global\let\svgwidth\undefined%
  \global\let\svgscale\undefined%
  \makeatother%
  \begin{picture}(1,1.49233148)%
    \put(0,0){\includegraphics[width=\unitlength]{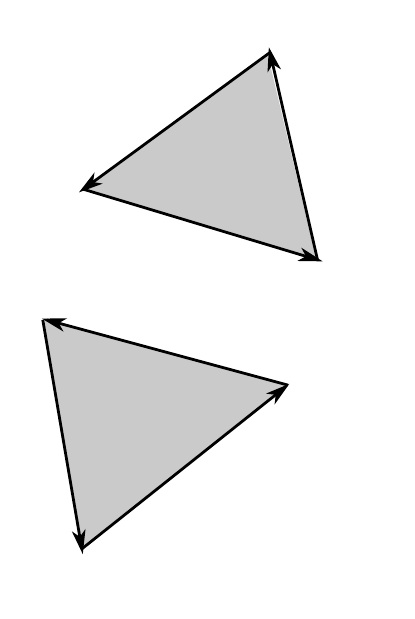}}%
    \put(0.53903506,1.08153094){\color[rgb]{0,0,0}\makebox(0,0)[b]{\smash{$t_1$}}}%
    \put(0.34731884,0.47793117){\color[rgb]{0,0,0}\rotatebox{-0.91143266}{\makebox(0,0)[b]{\smash{$t_2$}}}}%
    \put(0.21730555,0.9388681){\color[rgb]{0,0,0}\rotatebox{-0.43259527}{\makebox(0,0)[b]{\smash{$\xi_1'$}}}}%
    \put(0.68353602,0.61823903){\color[rgb]{0,0,0}\rotatebox{-0.43259527}{\makebox(0,0)[b]{\smash{$\xi_2''$}}}}%
    \put(0.2919284,0.13165685){\color[rgb]{0,0,0}\rotatebox{-0.43259527}{\makebox(0,0)[b]{\smash{$\xi_2'$}}}}%
    \put(0.04431631,0.62857702){\color[rgb]{0,0,0}\rotatebox{-0.43259527}{\makebox(0,0)[b]{\smash{$\xi_2$}}}}%
    \put(0.82599775,0.89993274){\color[rgb]{0,0,0}\rotatebox{-0.43259527}{\makebox(0,0)[b]{\smash{$\xi_1''$}}}}%
    \put(0.56328413,1.35784552){\color[rgb]{0,0,0}\rotatebox{-0.43259527}{\makebox(0,0)[b]{\smash{$\xi_1$}}}}%
    \put(0.66859659,0.79493464){\color[rgb]{0,0,0}\rotatebox{-0.43259527}{\makebox(0,0)[b]{\smash{$\xi_1''$}}}}%
    \put(0.73288139,1.30017252){\color[rgb]{0,0,0}\rotatebox{-0.43259527}{\makebox(0,0)[b]{\smash{$\xi_1$}}}}%
    \put(0.1952579,1.09988552){\color[rgb]{0,0,0}\rotatebox{-0.43259527}{\makebox(0,0)[b]{\smash{$\xi_1'$}}}}%
    \put(0.18169015,0.75899492){\color[rgb]{0,0,0}\rotatebox{-0.43259527}{\makebox(0,0)[b]{\smash{$\xi_2$}}}}%
    \put(0.12741898,0.19948597){\color[rgb]{0,0,0}\rotatebox{-0.43259527}{\makebox(0,0)[b]{\smash{$\xi_2'$}}}}%
    \put(0.69540784,0.45404411){\color[rgb]{0,0,0}\rotatebox{-0.43259527}{\makebox(0,0)[b]{\smash{$\xi_2''$}}}}%
  \end{picture}%
\endgroup%

\end{center}

Elements of $\h$ lie in the direct sum of all the diagonals $\g_\Delta\subset\g\oplus\bar\g$ at the vertices of the triangles. 
&
\begin{center}
\begingroup%
  \makeatletter%
  \providecommand\color[2][]{%
    \errmessage{(Inkscape) Color is used for the text in Inkscape, but the package 'color.sty' is not loaded}%
    \renewcommand\color[2][]{}%
  }%
  \providecommand\transparent[1]{%
    \errmessage{(Inkscape) Transparency is used (non-zero) for the text in Inkscape, but the package 'transparent.sty' is not loaded}%
    \renewcommand\transparent[1]{}%
  }%
  \providecommand\rotatebox[2]{#2}%
  \ifx\svgwidth\undefined%
    \setlength{\unitlength}{119.3527832bp}%
    \ifx\svgscale\undefined%
      \relax%
    \else%
      \setlength{\unitlength}{\unitlength * \real{\svgscale}}%
    \fi%
  \else%
    \setlength{\unitlength}{\svgwidth}%
  \fi%
  \global\let\svgwidth\undefined%
  \global\let\svgscale\undefined%
  \makeatother%
  \begin{picture}(1,1.49233148)%
    \put(0,0){\includegraphics[width=\unitlength]{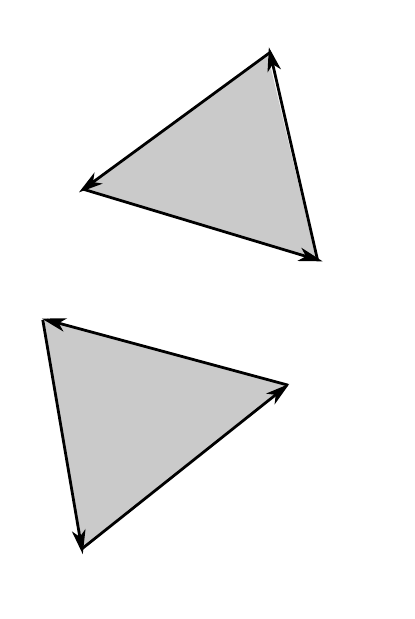}}%
    \put(0.53903506,1.08153094){\color[rgb]{0,0,0}\makebox(0,0)[b]{\smash{$t_1$}}}%
    \put(0.34731884,0.47793117){\color[rgb]{0,0,0}\rotatebox{-0.91143266}{\makebox(0,0)[b]{\smash{$t_2$}}}}%
    \put(0.21730555,0.9388681){\color[rgb]{0,0,0}\rotatebox{-0.43259527}{\makebox(0,0)[b]{\smash{$\xi$}}}}%
    \put(0.45887669,0.84746698){\color[rgb]{0,0,0}\rotatebox{1.43928041}{\makebox(0,0)[b]{\smash{$g^{-1}$}}}}%
    \put(0.39951764,0.68788344){\color[rgb]{0,0,0}\rotatebox{1.43928041}{\makebox(0,0)[b]{\smash{$g$}}}}%
    \put(0.67013045,0.60483338){\color[rgb]{0,0,0}\rotatebox{-0.43259527}{\makebox(0,0)[b]{\smash{$\eta$}}}}%
    \put(0.68200216,0.80834029){\color[rgb]{0,0,0}\rotatebox{-0.43259527}{\makebox(0,0)[b]{\smash{$\eta$}}}}%
    \put(0.18169014,0.74558931){\color[rgb]{0,0,0}\rotatebox{-0.43259527}{\makebox(0,0)[b]{\smash{$\xi$}}}}%
  \end{picture}%
\endgroup%

\end{center}

Elements of $\mf{l}$ lie in the direct sum of all the diagonals $\g_\Delta\subset\g\oplus\bar\g$ at the pairs of half-edges that are identified in $\Sigma$. The $\mf{l}$-orbit, $S$, consists of those elements $g\in \prod_{e\in \tilde{\mc{T}_1}}$ such that $g_e=g_{\bar e}^{-1}$.
\end{tabular}
\end{center}

For the $\mf l$-orbit  $S\subset N$ we take the subset given by the conditions $g_{\bar e}=g^{-1}_e$ for any pair of edges $e,\bar e$ that are identified in $\Sigma$. We have
$$\mc M_\Sigma(G)=\mc M_{red}:=\mu^{-1}(S)/(\h\cap\mf l).$$
Thus we are able to obtain $\mc M_\Sigma(G)$ by quasi-Hamiltonian reduction from triangles.
\end{example}

So far we have not explicitly described the symplectic or Poisson structure on $\mc M_{red}$. In a special case it is very simple. Let $$\mu_i:\mc M_{\Sigma_i, V_i}(G_i)\to N_i$$
denote the (exact) moment map, and let $\omega_i$ be the quasi-symplectic 2-form on $M_{\Sigma_i, V_i}(G_i)$ (given explicitly in Theorem \ref{thm:qhamIntro}).
For every boundary arc $e$ of $\Sigma_i$ we have the involution of $\mf d_e=\bar\g_i\oplus\g_i$ given by 
$$(\xi,\eta)\mapsto(\eta,\xi).$$
If we apply the involution simultaneously at all the boundary arcs, we get an involution of
$$\mf d=\bigoplus_i\bigoplus_{e\subset\partial\Sigma_i}\bar\g_i\oplus\g_i.$$
We shall say that a subalgebra $\mf l\subset\mf d$ is \emph{symmetric} if it is invariant with respect to this involution.

\begin{theorem}\label{thm:MainThmIntro}
If $\mf l\subset\mf d$ is a symmetric Lagrangian subalgebra and $S\subset N$ is the $\mf l$-orbit through the identity element $$1\in N=\prod_i G_i^{E_{\Gamma_i}},$$ then the symplectic form $\omega_{red}$ on
$$\mc M_{red}=\mu^{-1}(S)/{\mf l\cap\mf h}$$
is given by
$$p^*\omega_{red}=\left.\sum_{i}\omega_{i}\right\rvert_{\mu^{-1}(S)}$$
where $p:\mu^{-1}(S)\to\mc M_{red}$ is the projection.
\end{theorem}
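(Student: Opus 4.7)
The plan is to derive this explicit formula from the abstract reduction construction of \cref{thm:reductionIntro} (more precisely, its refinements \cref{thm:ExactPartRed,thm:PartRed}). By \cref{thm:qhamIntro} each $\mc M_{\Sigma_i,V_i}(G_i)$ is quasi-Hamiltonian with \emph{exact} moment map $\mu_i$, so the product $\mc M$ carries the moment map $\mu$ and the 2-form $\Omega:=\sum_i \omega_i$, which satisfies $d\Omega=\mu^*\eta$ for the appropriate Cartan 3-form $\eta$ on $N$. The task is to check that, in this special case, $\Omega|_{\mu^{-1}(S)}$ already is the pullback $p^*\omega_{red}$ of the symplectic form produced by the reduction theorem.

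First I would show that $\Omega|_{\mu^{-1}(S)}$ descends to $\mc M_{red}$. For $\xi\in\mf l\cap\mf h$ generating a vector field $X_\xi$ on $\mc M$, the moment map identity gives $\iota_{X_\xi}\Omega=\mu^*\alpha_\xi$, where the 1-form $\alpha_\xi$ is built from the pairing on $\mf d$. At a point $s\in S=L\cdot 1$, tangent vectors to $S$ are (left or right translates of) elements of $\mf l$, so the pairing evaluates as $\langle\xi,\mf l\rangle$; since $\mf l$ is Lagrangian this vanishes. Hence $\iota_{X_\xi}\Omega=0$ along $\mu^{-1}(S)$, so $\Omega|_{\mu^{-1}(S)}$ is basic.

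Next I would verify closedness of the descended form. Because $d\Omega|_{\mu^{-1}(S)}=\mu^*\eta|_{\mu^{-1}(S)}$, it suffices to check that $\eta|_S=0$. On the identity $\mf l$-orbit, tangent vectors are given by elements of $\mf l$ via the Maurer-Cartan form, and $\eta$ is the Cartan 3-form $\tfrac{1}{12}\langle[\cdot,\cdot],\cdot\rangle$ associated to the pairing on $\mf d$; on three elements of a Lagrangian subalgebra this vanishes identically. Thus $\Omega|_{\mu^{-1}(S)}$ descends to a closed 2-form on $\mc M_{red}$, which by \cref{thm:reductionIntro}(3) lies on the same manifold that carries the reduced symplectic form.

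The last step, and in my view the main obstacle, is matching this descended 2-form with the canonical $\omega_{red}$ produced by the abstract reduction. In the Dirac/Manin-pair framework, the reduced 2-form on $\mu^{-1}(S)/(\mf l\cap\mf h)$ is constructed from the composition of morphisms of Manin pairs, and the representative 2-form generally depends on an auxiliary splitting (as noted in the footnote of \cref{sec:Q-HamMflds}). I would argue that the \emph{symmetric} hypothesis on $\mf l$ picks out a splitting of $\mf d$ compatible with the edge-reversal involution, and that $S$ being the orbit through $1$ means there is no ``twist'' contribution to the reduced form. Together these make the canonical correction term vanish, so that the reduced form is exactly $\Omega|_{\mu^{-1}(S)}$ rather than $\Omega|_{\mu^{-1}(S)}+\mu^*\gamma$ for some $\mf l$-equivariant 2-form $\gamma$ on $S$. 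Once this identification of representatives is carried out (reducing to a linear computation on $\mf d$ at the identity, by $\mf l$-equivariance), Steps 1 and 2 complete the proof.
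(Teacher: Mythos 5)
Your overall route is the one the paper itself takes: \cref{thm:MainThmIntro} is obtained there as a corollary of \cref{thm:PartRedSplEx} via \cref{rem:SymmetricOrbits}, i.e.\ by showing that the splitting-dependent correction term $\mu^*\la\theta,\vartheta_s-\frac{1}{2}s\circ\mbf{a}(\theta)\ra$ in the reduced $2$-form $q_M^*\tilde\omega_\theta=i_M^*\omega-\mu^*\la\theta,\vartheta_s-\frac{1}{2}s\circ\mbf{a}(\theta)\ra$ vanishes. The problem is that the decisive point is only asserted in your Step 3, and your Steps 1 and 2 quietly presuppose it. The fact you actually need is that the canonical isotropic splitting $s$ of the Cartan Courant algebroid (\cref{ex:CartCourSplit}) satisfies $s(TS)\subseteq\mf l$. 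This is what kills the correction term, and it is also what your Steps 1 and 2 secretly use: the $1$-form $\iota_{X_\xi}\Omega$ evaluated on a tangent vector $\mbf{a}(\zeta)\in TS$ with $\zeta\in\mf l$ equals $\la\xi,s(\mbf{a}(\zeta))\ra$, \emph{not} $\la\xi,\zeta\ra$, and likewise the curvature $3$-form restricted to $S$ is $\la\Cour{s(\cdot),s(\cdot)},s(\cdot)\ra$; Lagrangianity of $\mf l$ makes these vanish only once you know $s(\mbf{a}(\zeta))\in\mf l$. Identifying $TS$ with a subspace of $\mf d$ is exactly the role of the splitting, and this identification does not land in $\mf l$ for a general Lagrangian subalgebra: \cref{rem:cLag2Form} shows that without the symmetry hypothesis the reduced form differs from $i_M^*\Omega$ by $\frac12\mu^*\la\theta,s\circ\mbf{a}(\theta)\ra$, so both your claimed formula and even the basicness of $\Omega|_{\mu^{-1}(S)}$ can fail. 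Any argument for Steps 1--2 that uses only Lagrangianity is therefore incomplete.

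The missing verification is short and is precisely the content of \cref{rem:SymmetricOrbits}. At $1\in N$ the splitting on each edge factor reads $s(\mbf{a}(\xi,\eta))=\tfrac12(\xi-\eta,\eta-\xi)$ for $(\xi,\eta)\in\bar\g_i\oplus\g_i$; if $\mf l$ is symmetric then $(\xi,\eta)\in\mf l$ implies $(\eta,\xi)\in\mf l$, hence $(\xi-\eta,\eta-\xi)\in\mf l$, so $s(T_1S)\subseteq\mf l$; since $s$ is $\mf d$-invariant and $S$ is the $\mf l$-orbit of $1$, this propagates to every point of $S$. With $s(TS)\subseteq\mf l$ in hand, \cref{thm:PartRedSplEx} gives $q_M^*\tilde\omega_\theta=i_M^*\sum_i\omega_i$ directly, which is the claim; your Steps 1 and 2 then become redundant, as basicness and closedness of the reduced form are already built into \cref{thm:ExactPartRed,thm:PartRedSplEx}. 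So: right strategy, but the one computation that carries the theorem is the one you left out.
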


As explained in \cref{rem:SymmetricOrbits}, \cref{thm:MainThmIntro} will follow as a corollary to \cref{thm:PartRedSplEx}.
\subsection{Colouring Edges}\label{sec:ColBndSurf}

Suppose that $\mf{c}\subseteq\g$ is a coisotropic subalgebra (i.e. $\mf{c}^\perp\subseteq\mf{c}$). Then the subalgebra
$$\mf{l}_{\mf{c}}:=\{(\xi,\eta)\in(\ol{\g}\oplus\g)\mid \xi,\eta\in\mf{c}\text{ and }\xi-\eta\in\mf{c}^\perp\}$$
is both Lagrangian and symmetric. The orbit of $\mf{l}_{\mf{c}}$ through the identity of $G$, with respect to the action \cref{eq:resGaugTransEdgeIntro}, can be identified with the simply connected Lie group $C^\perp$ integrating the Lie algebra $\mf{c}^\perp$.

Let $(\Sigma, V)$ be a marked surface. For every boundary arc  $e$ (i.e.\ for every edge of the permutation graph $\Gamma_{\Sigma,V}$ with the vertex set $V$), let $\mf{c}_e\in\g$ be a coisotropic subalgebra, and consider the Lie subalgebra
$$\mf{l}:=\bigoplus_e \mf l_{\mf c_e}\subset\bigoplus_e\bar\g\oplus\g=\mf d.$$
It is clear that $\mf{l}$ is both Lagrangian and symmetric. Let $S\subset N=\prod_e G$ be the $\mf l$-orbit passing through $1\in\prod_e G$. \cref{thm:MainThmIntro} implies that if the quotient space
$$\mc M_{red}=\mu^{-1}(S)/\mf{l}\cap\g^V$$
is a manifold, it is symplectic. 

Concretely,
\begin{equation}\label{eq:ColBndLevSet}
\mc M_{red}=\{f:\Pi_1(\Sigma,V)\to G \mid f(e)\in C^\perp_e\text{ for every }e\}/\mf{l}\cap\g^V,
\end{equation}
and 
$\mf{l}\cap\g^V\subset\g^V$
is given by the conditions
\begin{subequations}\label{eq:ColBndResGT}
\begin{gather}
\xi_v\in\mf c_{e_1}\cap\mf c_{e_2}\text{ where }v=\on{in}(e_1)=\on{out}(e_2)\\
\xi_{\on{in}(e)}-\xi_{\on{out}(e)}\in\mf c^\perp_e.
\end{gather}
\end{subequations}
Notice that if $\mf c_e$'s are Lagrangian then the first condition implies the second one. If, moreover, $\mf c_{e_1}\cap\mf c_{e_2}=0$ for any pair of consecutive boundary arcs then $\mf{l}\cap\g^V=0$. Under these conditions the moduli space $\mc M_{red}$ was considered in \cite{Severa:2011ug}.

\begin{figure}
\begin{center}
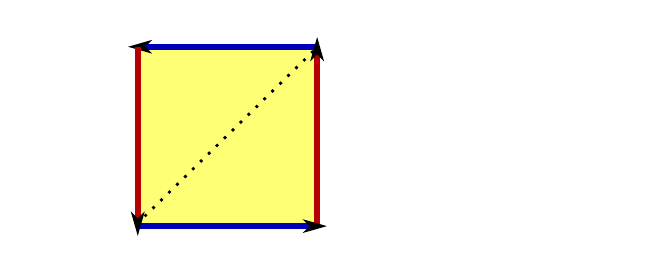
\caption{\label{fig:DoublSympGroupoid} The symplectic double groupoid integrating the Lie-Poisson structures on $E$ and $F$.}
\end{center}
\end{figure}
\begin{example}[\!\!\cite{Severa:2005vla,Severa:2011ug,Boalch:2011vt,Boalch:2009tn,Boalch:2007ty}]\label{ex:LuWeinSympDbl}
Suppose that $\mf{e},\mf{f}\subseteq\g$ are transverse Lagrangian subalgebras, and let $E,F\subset G$ denote the corresponding  connected Lie groups. We may colour alternate edges of a rectangle with $\mf{e}$ and $\mf{f}$, as in \cref{fig:DoublSympGroupoid}. From \cref{eq:ColBndLevSet} we see that
$$\mc M_{red}=\{(e_1,e_2,f_1,f_2)\in E^2\times F^2\mid e_1f_1e_2f_2=1\}.$$
By \cref{thm:MainThmIntro} the moduli space $\mc M_{red}$
carries the symplectic form
$$\omega=\frac{1}{2}\la e_1^{-1}\d e_1,\d f_1 \:f_1^{-1}\ra+\frac{1}{2}\la e_2^{-1}\d e_2,\d f_2\: f_2^{-1}\ra.$$
Here, the upper-left triangle in \cref{fig:DoublSympGroupoid} contributed the term $\frac{1}{2}\la e_1^{-1}\d e_1,\d f_1 f_1^{-1}\ra$ to this expression while the bottom-right triangle in \cref{fig:DoublSympGroupoid} contributed the term $\frac{1}{2}\la e_2^{-1}\d e_2,\d f_2 f_2^{-1}\ra$.

As explained in \cite{Severa:2005vla,Severa:2011ug}, the symplectic manifold $(\mc{M},\omega)$ is the Lu-Weinstein symplectic double groupoid integrating the Lie-Poisson structures on $E$ and $F$ \cite{Lu:1989vb}. 


\end{example}

\begin{figure}
\begin{center}
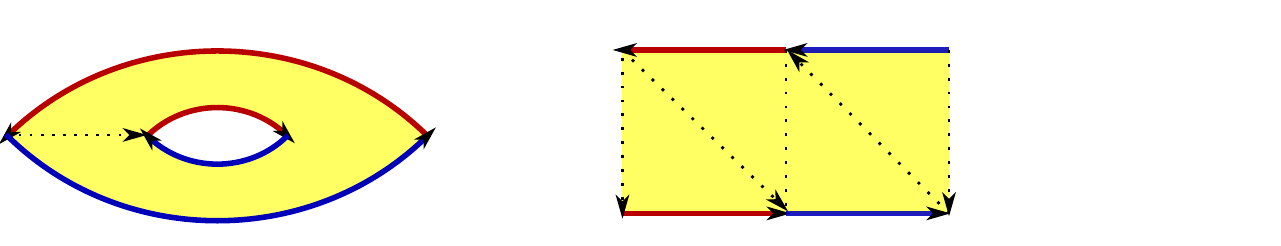
\caption{\label{fig:DoublSympGroupoidDbl} The symplectic double groupoid integrating the Lie-Poisson structure on $G$.}
\end{center}
\end{figure}
\begin{example}[\!\!\cite{Severa:2011ug,Severa98}]
Let  $\mf{e},\mf{f}\subseteq\g$ be as above. Divide each boundary component of the annulus into two segments and colour alternate edges with $\mf{e}$ and $\mf{f}$, as in \cref{fig:DoublSympGroupoidDbl}. From \cref{eq:ColBndLevSet} we see that
$$\mc M_{red}=\{(e_1,e_2,f_1,f_2,g)\in E^2\times F^2\times G\mid ge_1f_1g^{-1}f_2e_2=1\}.$$
The moduli space $\mc M_{red}$
carries the symplectic form
\begin{multline*}\omega=\frac{1}{2}\la e_2^{-1}\d e_2,\d g\: g^{-1}\ra+\frac{1}{2}\la (e_2g)^{-1}\d(e_2g),\d e_1\:e_1^{-1}\ra\\
+\frac{1}{2}\la gf_1^{-1}\d (f_1g^{-1}),gf_2\:f_2^{-1}\ra-\frac{1}{2}\la \d g\:g^{-1},\d f_1 \:f_1^{-1}\ra,\end{multline*}
which can be computed from the triangulation pictured in \cref{fig:DoublSympGroupoidDbl}.

As explained in \cite{Severa:2011ug,Severa98}, the symplectic manifold $(\mc{M},\omega)$ is the symplectic double groupoid integrating the Lie-Poisson structure on $G$.
\end{example}

\begin{figure}
\begin{center}
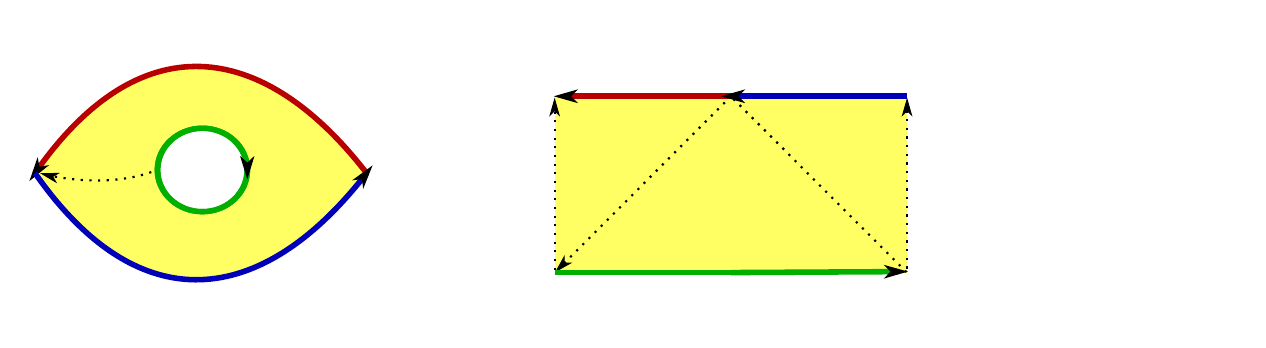
\caption{\label{fig:LuYakimov} The symplectic groupoid integrating the Lu-Yakimov Poisson structure on $G/C$.}
\end{center}
\end{figure}
\begin{example}
Suppose that $\mf{e},\mf{f}\subseteq\g$ are transverse Lagrangian subalgebras and $\mf{c}\subseteq\g$ is a coisotropic subalgebra. Let $E,F,C,C^\perp\subset G$ denote the corresponding  connected Lie subgroups, and suppose that  $C\subset G$  is closed. Consider the annulus whose outer boundary is divided into two segments. Colour the outer boundary by the two Lie subalgebras $\mf{e}$ and $\mf{f}$ and the inner boundary by the inner boundary by the Lie subalgebra $\mf{c}$, as in \cref{fig:LuYakimov}. We have
$$\mu^{-1}(S)=\{(e,f,g,c)\in E\times F\times G\times C^\perp\mid efgcg^{-1}=1\}.$$
Meanwhile, \cref{eq:ColBndResGT} yields 
$$\mf{l}\cap\g^V=\{\xi\in\g^V\mid \xi_{v_0}\in\mf{c}\text{ and }\xi_v=0 \text{ for all }v\neq v_0\},$$
where $v_0$ is the vertex labelled in \cref{fig:LuYakimov}. Thus the Lie group of residual gauge transformations is $C$, acting as 
$$c'\cdot(e,f,g,c)=(e,f,gc'^{-1},c'cc'^{-1}),\quad c'\in C,\quad (e,f,g,c)\in E\times F\times G\times C^\perp.$$
Since, by assumption this acts freely and properly on $\mu^{-1}(N)$, \cref{thm:MainThmIntro} implies that the moduli space
$$\mc{M}_{red}=\{(e,f,g,c)\in E\times F\times G\times C^\perp\mid efgcg^{-1}=1\}/C$$
carries the symplectic form
$$\omega=-\frac{1}{2}\la\d g\:g^{-1},\d e\:e^{-1}\ra+\frac{1}{2}\la c^{-1}\d c,\d(g^{-1} e)\:e^{-1}g\ra+\frac{1}{2}\la f^{-1}\d f,\d g\:g^{-1}\ra.$$

The symplectic manifold $(\mc{M}_{red},\omega)$ is the symplectic groupoid integrating the Lu-Yakimov Poisson structure on the homogeneous space $G/C$ \cite{Lu06}. The source and target maps are 
$$\on{s}(e,f,g,c)=g,\quad \on{t}(e,f,g,c)=fg,$$
and the multiplication is
$$(e',f',g',c')\cdot(e,f,g,c)=(ee',f'f,g,c'c),\quad g'=fg.$$
\end{example}


 
\subsection{Domain walls and branched surfaces}\label{sec:DomWallBrnch}
Let $(\Sigma_i, V_i)$ be a finite collection of marked surfaces with boundary graphs $\Gamma_i$, and $G_i$ a collection of Lie groups with quadratic Lie algebras $\g_i$. As we observed above, the space
$$\mc M=\prod _i\mc M_{\Sigma_i,V_i}(G_i)$$
is a $(\mf d,\h)\times N$-quasi-Hamiltonian for appropriate $(\mf d,\h,N)$, and if we choose a Lagrangian Lie subalgebra $\mf l\subset\mf d$ and a $\mf l$-orbit $S\subset N$, then
$$\mc M_{red}=\mu^{-1}(S)/\mf l\cap\mf h$$
is symplectic. If the subalgebra $\mf l\subset\mf d$ is symmetric then \cref{thm:MainThmIntro} gives us a simple formula for the symplectic form on $\mc M_{red}$.

Let us now choose a symmetric $\mf l\subset\mf d$ in the following way. We first glue the boundary arcs of $(\Sigma_i, V_i)$ in an arbitrary way. More precisely, let $\mathsf W$ be a finite collection of (disjoint) unit intervals called \emph{domain walls}, let
$$\kappa: \sqcup_i E_{\Gamma_i}\to \mathsf W$$
be a surjective map assigning to every edge of every boundary graph $\Gamma_i$ a domain wall, and let 
$$\phi_e:e\to\kappa(e)$$
be a homeomorphism for every boundary edge $e$ (not required to preserve the orientation). Let $\Sigma$ be the topological space obtained from $\Sigma_i$'s and the domain walls after we identify every boundary arc $e$ with $\kappa(e)$ via the map $\phi_e$.

For every boundary arc $e\in E_{\Gamma_i}$ let $i( e)=i$, and
$$
\on{sign}(e)=
\begin{cases}
+1 &\text{if $\phi_e$ is orientation-preserving}\\
-1 &\text{otherwise.}
\end{cases}
$$
For every domain wall $w\in\mathsf W$, let
$$\g_w=\bigoplus_{\substack{e\in \kappa^{-1}(w)\\ \on{sign}(e)=+1}}\g_{i(e)}\ \oplus\bigoplus_{\substack{e\in \kappa^{-1}(w)\\ \on{sign}( e)=-1}}\bar\g_{i(e)}$$
and
$${\mf d_w}=\bar\g_w\oplus\g_w.$$
Notice that
$$\mf d:=\bigoplus_{w\in \mathsf W}\bar\g_w\oplus\g_w=\bigoplus_i(\bar\g_i\oplus\g_i)^{E_{\Gamma_i}}.$$

For every domain wall $w\in \mathsf W$ we now choose a coisotropic Lie subalgebra
$$\mf c_w\subset\g_w.$$
Using $\mf c_w$ we construct the symmetric Lagrangian Lie subalgebra
$\mf l_w\subset\mf d_w,$
$$\mf l_w:=\{(\xi,\eta)\in\bar\g_w\oplus\g_w \mid \xi,\eta\in\mf c_w,\,\xi-\eta\in\mf c_w^\perp\}.$$
Finally we set
$$\mf l=\bigoplus_{w\in\mathsf W}\mf l_w.$$

For every domain wall $w\in \mathsf W$, let $C^\perp_w\subset G_w$ denote the  connected Lie subgroup with Lie algebra $\mf{c}^\perp_w$,
and $$C^\perp=\prod_{w\in \mathsf W}C^\perp_w\subseteq \prod_i G_i^{E_{\Gamma_i}}.$$
 Then $S:=C^\perp\subset N$ is the $\mf l$-orbit passing through $1\in \prod_i G_i^{E_{\Gamma_i}}=N$. 
As before, we have
\begin{equation}\label{eq:BrnchLevSet}
\mu^{-1}(S)=\bigl\{\{f_i:\Pi_1(\Sigma_i,V_i)\to G_i\}_i\mid \prod_i\{f_i(e)^{\on{sign}(e)}\}_{e\in E_{\Gamma_i}}\in C^\perp\bigr
\}
\end{equation}
and
$$\mc M_{red}=\mu^{-1}(S)/\mf{l}\cap\bigoplus_i\g_i^{V_i}.$$

\begin{example}[Oriented surfaces with coloured boundaries]
If we have just one domain and the gluing map $\kappa$ is injective,   then we are in the case described in \cref{sec:ColBndSurf}. 
\end{example}

\subsubsection{Domain walls}
\begin{figure}
\begin{center}
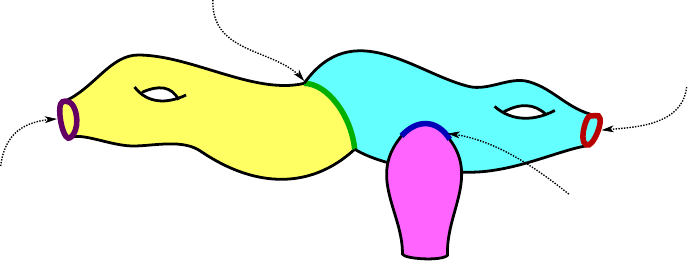
\caption{\label{fig:DomColSurf} Our surface is divided into domains with distinct structure groups, and the domain walls are coloured by coisotropic relations between the structure groups. As before, coisotropic boundary conditions are also chosen.}
\end{center}
\end{figure}

Suppose that the glued topological space $\Sigma$ is still a (not necessarily oriented) surface. Equivalently,  every domain wall $w\in\mathsf W$ borders  either one or two domains (i.e. the preimage $\kappa^{-1}(w)$ has cardinality one or two). The resulting surface $\Sigma$ was called a \emph{quilted surface} in \cite{LiBland:2012vo} (following \cite{Wehrheim:2010fa}).
  \begin{remark}
  Quantizations of these moduli spaces have been studied in the physics community \cite{Kapustin:2010we,Kapustin:2011cn,Fuchs:2012tq} for abelian structure groups and Lagrangian relations on the domain walls.
  \end{remark}

\begin{figure}
\begin{center}
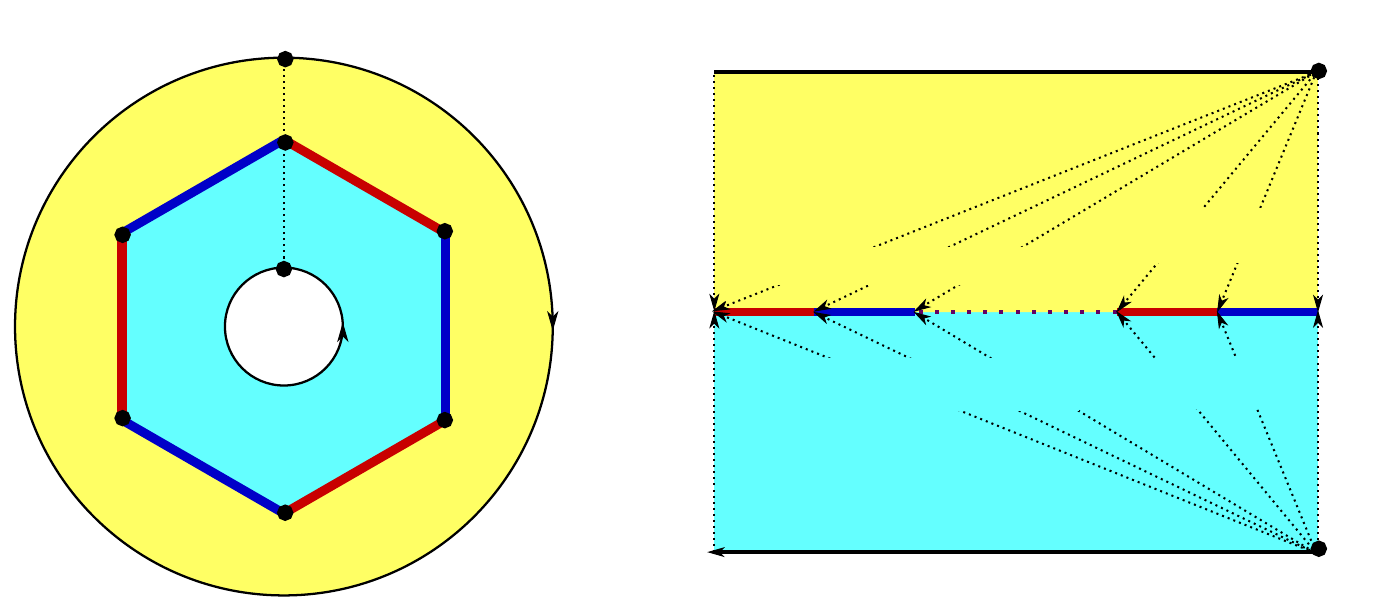
\caption{\label{fig:PBoalchDrawCut} On the surface pictured above, the structure group in the yellow domain is $G$ while the structure group in the blue domain is $H$. Along the boundary of the two domains, blue edges are coloured with $\mf{c}_+$ while the red edges are coloured with $\mf{c}_-$. Cutting along the dotted line in the first picture yields yields the second picture. Acting by $H$ at the vertices $v_1,\dots, v_{2r}$ allows us to set the holonomies $h_0,\dots,h_{2r-1}$ to the identity.}
\end{center}
\end{figure}
\begin{example}[\!\!\cite{Boalch:2011vt}]\label{ex:FissionSpace}
Suppose that $\g=\mf{u}_+\oplus\h\oplus\mf{u}_-$ as a vector space (but not as a Lie algebra), where $\mf{p}_\pm:=\h\oplus\mf{u}_\pm\subseteq \g$ are coisotropic subalgebras satisfying $\mf{p}_\pm^\perp=\mf{u}_\pm$. Suppose further that the Lie subalgebras $\mf{u}_\pm,\mf{p}_\pm,\h$ all integrate to closed subgroups $U_\pm,P_\pm,H\subseteq G$ such that $H=P_+\cap P_-$. The metric on $\g$ descends to a non-degenerate invariant metric on $\h\subseteq\g$, and
\begin{equation}\label{eq:BoalchCois}\mf{c}_\pm:=\{(\xi;\xi+\mu)\in\h\oplus\g\mid \xi\in\h\text{ and }\mu\in \mf{u}_\pm\}\end{equation}
is a coisotropic subalgebra (in fact, it is Lagrangian).

As in \cref{fig:PBoalchDrawCut}, let $\Sigma$ denote the annulus, and let $\gamma\subset(\Sigma\setminus\partial\Sigma)$ be a simple closed curve representing the  generator of the fundamental group. Cutting $\Sigma$ along $\gamma$ yields two annuli, $\Sigma_G,\Sigma_H\subset\Sigma$, which we label with the structure groups $G$ and $H$, respectively. We divide $\gamma$ into $2r$ segments with endpoints labelled $v_1,\dots v_{2r}$, and colour alternating segments with the coisotropic Lie subalgebras $\mf{c}_+$ and $\mf{c}_-$. Finally, we mark the respective components of $\partial \Sigma$ with points $x_G$ and $x_H$. 

 The points $x_G,X_H,v_1,\dots, v_{2r}$ form the vertices of a triangulation of $\Sigma$, as pictured in $\cref{fig:PBoalchDrawCut}$. Now the orbit of $\mf{l}_\pm$ through the identity is $P_\pm$.
 Thus, from \cref{eq:BrnchLevSet}, we see that
 \begin{multline*}\mu^{-1}S=\{(h,h_0,\dots, h_{2r-1};C_0,C_1,\dots, C_{2r})\in H^{2r+1}\times G^{2r+1}\\\mid h_{2i+1}^{-1}C_{2i+1} C_{2i}^{-1}h_{2i}\in U_+\text{ and }h_{2i}^{-1}C_{2i} C_{2i-1}^{-1}h_{2i-1}\in U_-,\},\end{multline*}
 where the elements $h,h_0,\dots, h_{2r-1}\in H$ and $C_0,C_1,\dots, C_{2r}\in G$ denote the appropriately labelled holonomies in \cref{fig:PBoalchDrawCut}.

 On the other hand,
$$\mf{l}\cap\bigoplus_i\g_i^{V_i}\cong \prod_{v_1,\dots, v_{2r}}\h,$$
acting at the appropriate vertices. Thus, up to a gauge transformation, we may assume that $h_0=h_1=\dots=h_{2r-1}=1$. Setting $S_i=C_{i} C_{i-1}^{-1}$, we see that
 that the quotient space, 
$\on{hol}^{-1}(\mf{l}\cdot1)/\big(\mf{l}\cap\prod_{t\in\mc{T}_2}(\g_t)_{\Gamma_{P_3}}\big),$ 
can be identified with 
$$_G\mc{A}_H^r:=\{(h;S_{2r},\dots,S_1;C_0)\in H\times (U_-\times U_+)^r\times G\}.$$

We compute the two form to be
\begin{multline*}\omega=-\frac{1}{2}\big(\la\d(hC_{2r})\:(hC_{2r})^{-1},\d C_0\:C^{-1}_0\ra+\la (h C_{2r})^{-1}\d(h C_{2r}),C_{2r-1}^{-1}\d C_{2r-1}\ra\\+\sum_{i=1}^{2r-1}\la C_i^{-1}\d C_i,C_{i-1}^{-1}\d C_{2i-1}\ra\big).\end{multline*}
Substituting $bC_0=hC_{2r}$ in the first term and simplifying yields
\begin{multline*}\omega=\frac{1}{2}\big(\la \d C_0\:C_0^{-1},\Ad_b\d C_0\:C_0^{-1}\ra+\la \d C_0\:C_0^{-1},\d b\:b^{-1}\ra+\la\d C_{2r}\:C_{2r}^{-1},h^{-1}\d h\ra\\ +\sum_{i=1}^{2r}\la C_i^{-1}\d C_i,C_{i-1}^{-1}\d C_{2i-1}\ra\big),
\end{multline*}
(here we have used the fact that $\la \d S_{2r}\: S_{2r}^{-1},h^{-1}\d h\ra=0$).
Now, we haven't coloured the boundary of $\partial \Sigma$, so \cref{thm:MainThmIntro} does not imply that $\omega$ is symplectic. Nevertheless, as we shall see later, \cref{thm:ExactPartRed} implies that $\omega$ defines a quasi-Hamiltonian $G\times H$ structure on $_G\mc{A}_H^r$, where the moment map $_G\mc{A}_H^r\to G\times H$ is given by the holonomy along the (oriented) boundary components:
$$(h;S_{2r},\dots,S_1;C_0)\to (C_0^{-1}hS_{2r}\cdots S_1C_0,h^{-1}),$$
and the $G$ and  $H$ actions on $_G\mc{A}_H^r$ are precisely the residual gauge transformations. These act by $G$ at $x_G$ and by $H$ at $x_H$:
$$(g,k)\cdot(h;S_{2r},\dots,S_1;C_0)=(khk^{-1},kS_{2r}k^{-1},\dots, kS_{1}k^{-1},kC_0g^{-1}),\quad g\in G,\quad k\in H.$$

  \begin{remark}
This quasi-Hamiltonian $G\times H$-space was first discovered by Boalch \cite{Boalch:2011vt,Boalch:2007ty,Boalch:2009tn}, who used it to study  meromorphic connections on Riemann surfaces. 
  \end{remark}

%
%

\end{example}

%
%
%

\subsubsection{Branched Surfaces}
\begin{figure}
\begin{center}
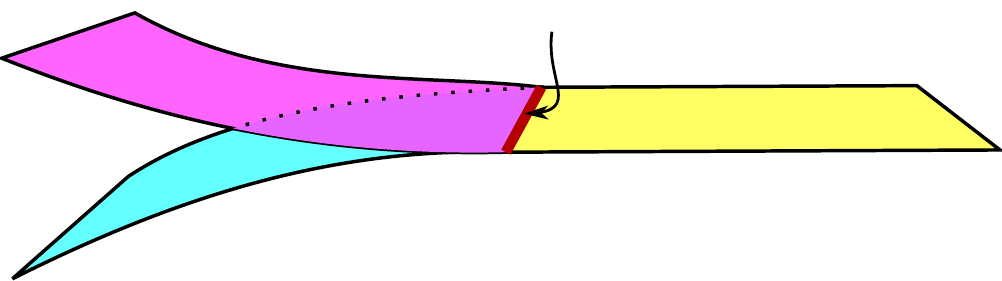
\caption{\label{fig:ColouredBranched} Pictured above are three domains with structure Lie algebras $\g_1$, $\g_2$ and $\g_3$. The three domains intersect at a branch locus, which we must colour by a coisotropic subalgebra $\mf{c}\subseteq \oplus_{i=1}^3\g_i$.}
\end{center}
\end{figure}
We can now consider examples where $\Sigma$ is not a topological surface, i.e.\ where the domain walls may border more than two domains.
\begin{remark}
Since our gauge fields (connections on $\Sigma$) are constrained to lie in $\mf{c}_w\subseteq\bigoplus_{e\in \kappa^{-1}(w)}\g_e$ along the domain wall $w\in W$, 
one may interpret $\mf{c}_w$ as a ``conservation law'' for an interaction between the structure groups of the various domains glued to the domain wall $w$. 
\end{remark}

\begin{figure}
\begin{center}
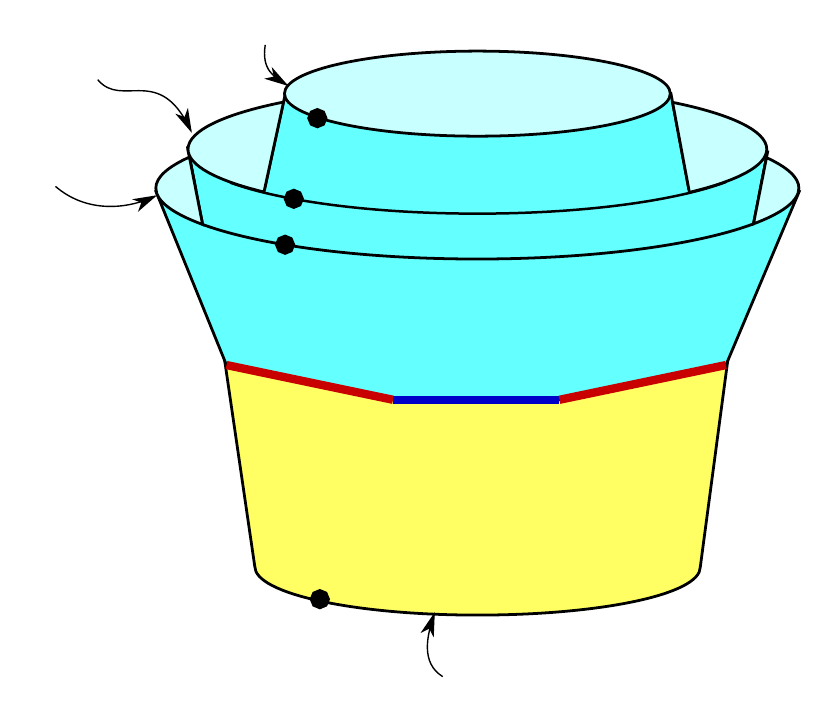
\caption{\label{fig:BoalchFission} A branched surface which arises in the work of Philip Boalch (see \cite[Pg. 2675]{Boalch:2009tn} and \cite[Fig. 2]{Boalch:2011vt}).}
\end{center}
\end{figure}
\begin{example}[\!\!\cite{Boalch:2011vt,Boalch:2007ty,Boalch:2009tn}]
Let $V=\oplus_{i=1}^n V_i$ be a direct sum decomposition of a finite dimensional vector space, $G=\on{Gl}(V)$,
 and $P_+\subseteq G$ the stabilizer for the flag 
 $$F_1\subset F_2\subset\cdots \subset F_n=V,$$
  where $F_k=\oplus_{i=1}^k V_i$. Similarly, let $P_-\subseteq G$ be the stabilizer for the flag
 $$\tilde F_n\subset\cdots \subset \tilde F_2 \subset \tilde F_1=V,$$
  where $\tilde F_k=\oplus_{i=k}^n V_i$. Finally, let $H_i=\on{Gl}(V_i)$ so that $\prod_{i=1}^nH_i=P_+\cap P_-$. 
  Let $H=\prod_{i=1}^nH_i$, let $U_\pm$ denote the unipotent radicals of $P_\pm$, and let $\g,\mf{p}_\pm,\mf{u}_\pm,\h,\h_i$ denote the Lie algebras corresponding to the various Lie groups. 
  
  Now consider the moduli space
  $$_G\mc{A}_H^r:=(\prod_{i=1}^nH_i)\times (U_-\times U_+)^r\times G$$
  described in \cref{ex:FissionSpace}. The coisotropic Lie algebra
  $$\mf{c}_\pm:=\{(\sum_{i=1}^n\xi_i;\sum_{i=1}^n\xi_i+\mu)\mid \xi_i\in\h_i\text{ and }\mu\in \mf{u}_\pm\}\subset(\oplus_{i=1}^n\h_i)\oplus\g$$
  defined in \cref{eq:BoalchCois} can be used to colour the branch locus of $n+1$ domains with the structure groups $H_1,\dots,H_n$ and $G$.
  Thus we may interpret $_G\mc{A}_H^r$ as the moduli space of flat connections for the branched surface pictured in \cref{fig:BoalchFission}. 
  \begin{remark}
The quasi-Hamiltonian space $_G\mc{A}_H^r$ first appeared in the work of Philip Boalch \cite{Boalch:2011vt,Boalch:2007ty,Boalch:2009tn}. 
  \end{remark}

\end{example}

\begin{figure}
\begin{center}
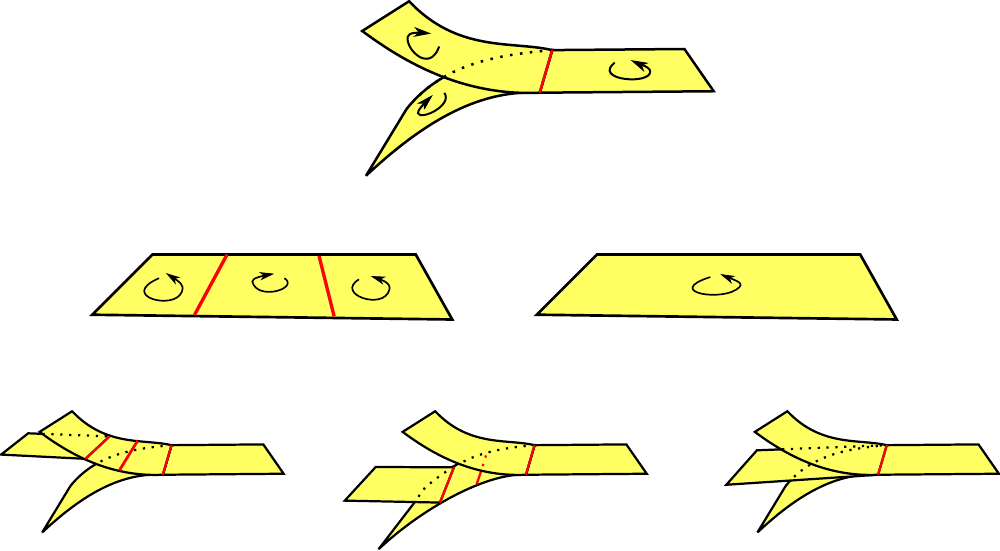
\caption{\label{fig:Branched} Each of the edges above are coloured by $\mf{c}_n$, where $n$ is the number of domains branching off the given edge. As we cross a branch locus, the orientation of the domains reverses.
 As depicted, we may move branch loci past each other. }
\end{center}
\end{figure}
\begin{example}[quasi-triangular structures]\label{rem:quasiTriStr}
Let $\g$ be a quasi-triangular Lie quasi-bialgebra, i.e.\ $\g$ is a Lie algebra with a chosen element $s\in(S^2\g)^\g$. Let $\mf d$ be the Drinfel'd double of $\g$. This means that $\mf d$ is a quadratic Lie algebra, $\g\subset\mf d $ is a Lagrangian subalgebra, and $\mf p\subset\mf d$ is an ideal such that $\mf d=\g\oplus\mf p$ as a vector space. Additionally, the restriction of the quadratic form on $\mf d$ to $\mf p \cong \g^*$ is $s$.\footnote{These properties uniquely define $\mf{d}$. In particular, the Lie bracket is given by $$[\xi+\alpha,\eta+\beta]=[\xi,\eta]+\ad_\xi^*\alpha-\ad_{\eta}^*\beta,\quad\xi,\eta\in\g,\quad\alpha\in\mf p,\beta\in\mf{p}^\perp$$ where $\ad^*$ denotes the contragredient representation of $\g$.}

 There is a natural groupoid structure on $\mf d$, where $\g$ is the space of objects and composition is defined by
$$(\xi+\alpha)(\xi+\beta)=\xi+\alpha+\beta\quad \forall \xi\in\g,\alpha\in\mf p,\beta\in\mf p^\perp,$$
and the source and target maps are $$\on{s}(\xi+\alpha)=\xi,\quad\on{t}(\xi+\alpha)=\xi+s(\alpha,\cdot),\quad \xi\in \g,\alpha\in \mf{p}$$

The graph of multiplication,
\begin{equation}\label{eq:CAgrpdOvPtMult}\gr(\on{Mult})=\{(\xi \eta,\xi,\eta)\mid \xi,\eta\in\mf{d}\text{ are composable}\}\subseteq\mf{d}\oplus{\bar{\mf{d}}\oplus\bar{\mf{d}}},\end{equation}
is a Lagrangian Lie subalgebra. 
See \cite{Drinfeld:1989tu} and \cite{LiBland:2011vqa} for more details.

Similarly, the graph of iterated multiplication
$$\mf{c}_n:=\{(\xi_1,\dots,\xi_n)\mid\xi_1\xi_2\cdots\xi_n\in\g\}\subseteq\mf{d}^n$$
is also a Lagrangian Lie subalgebra. As such it can be used to colour the branch locus of $n$ domains each with structure group $D$ (a connected Lie group with the Lie algebra $\mf d$). Note that crossing such a branch locus reverses the orientation of the domain.

The associativity of multiplication on $\mf{d}$ plays out as follows: paying attention to the orientations, if (as in \cref{fig:Branched}) we
\begin{itemize}
\item move two branch loci past each other, or 
\item break a $\mf{c}_{m+n-2}$-coloured branch locus into two separate $\mf{c}_m$ and $\mf{c}_n$ coloured branch loci (or vice-versa),
\end{itemize} 
the resulting moduli spaces are canonically symplectomorphic. 

\begin{figure}
\begin{center}
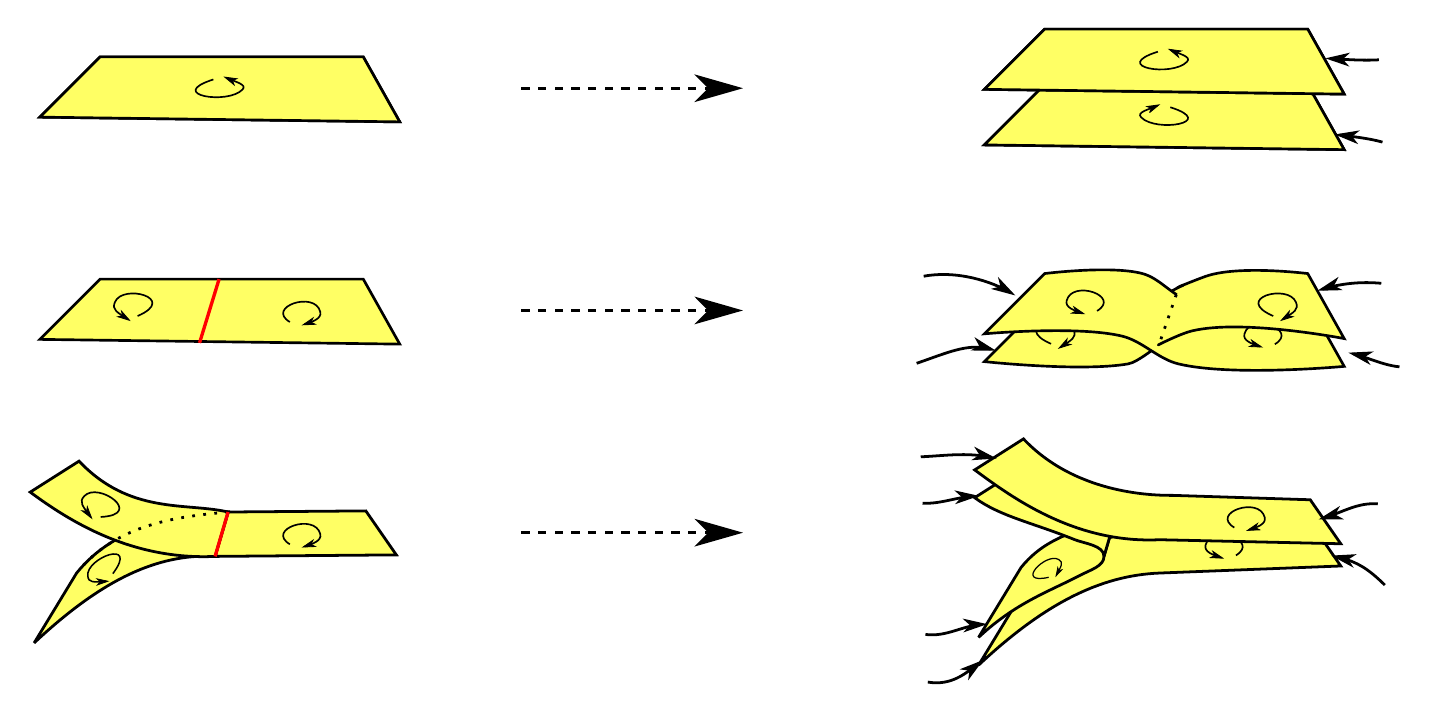
\caption{\label{fig:TwoSheeting} We replace each domain in our branched surface by two copies, one with the same orientation (labelled $+$), and one with the opposite orientation (labelled $-$). Each $\mf{c}_n$-coloured domain wall is replaced by cyclically gluing the incident sheets together, respecting the orientations. In this way we obtain an oriented surface from our $\mf{c}_n$-coloured (branched) surface.}
\end{center}
\end{figure}

In fact, there is a clear interpretation of the 
the moduli spaces constructed by sewing domains together using the Lagrangian relations $\mf{c}_n$. One may identify them with certain traditional moduli spaces in the following way: Suppose that $\cup \Sigma_d\to \Sigma$ is our $\mf{c}_n$-coloured surface with domains $\Sigma_d$. First we form a two sheeted (branched cover) $\tilde\Sigma$ of $\Sigma$ as follows: double each domain $\Sigma_d$ to two sheets $\Sigma_d^+\cup\Sigma_d^-$, where the sheet $\Sigma_d^+$ is canonically identified with $\Sigma_d$, while the sheet $\Sigma_d^-$ is also canonically identified with $\Sigma_d$ but with the opposite orientation. At each $\mf{c}_n$-coloured domain wall with incident domains $\Sigma_{d_1},\dots, \Sigma_{d_n}$, cyclically glue the sheets 
$\Sigma_{d_1}^\pm,\dots, \Sigma_{d_n}^\pm$
together along their corresponding boundary segment, respecting the orientations, as in \cref{fig:TwoSheeting}. In this way, one constructs the oriented surface $\tilde\Sigma$. 

The groupoid inversion $\on{Inv}:\mf{d}\dasharrow\bar{\mf{d}}$, being a morphism of Lie algebras, integrates to an involution of the Lie group $D$. This involution in turn lifts to an involution $$\mc{A}_{\tilde\Sigma}(D)\to \mc{A}_{\tilde\Sigma}(D)$$ of the connections on $\tilde\Sigma$, mapping the fibre of the principal bundle over the $+$-sheet to the fibre over the $-$-sheet via $\on{Inv}:D\to D$. The involution is a symplectomorphism which is compatible with the gauge transformations, and thus descends to a symplectomorphic involution on the moduli space, $\mc{M}_{\tilde\Sigma}(D)$. The fixed points of this involution are naturally identified with the moduli space $\mc{M}_{\Sigma}(D)$ of flat connections on the original $\mf{c}_n$-coloured surface.
\end{example}

\begin{example}[\!\!\cite{Fock:1999wz,Boalch:2007ty,Boalch:2009tn,Boalch:2011vt}]\label{ex:DblSymplBoalch}
Suppose that $\g$ is a quasi-triangular Lie-bialgebra, where the $s\in(S^2\g)^\g$ is non-degenerate, i.e. $\g$ is a quadratic Lie algebra. Equivalently, the double is $\mf{d}=\g\oplus\bar\g$, and the Manin triple is $(\mf{d};\g_\Delta,\mf{h})$ where $\g_\Delta\subset \g\oplus\bar\g=\mf{d}$ is the diagonal, and $\mf{h}\subset\g\oplus\bar\g$ is a complementary Lagrangian subalgebra. Notice that we may view $\mf{h}$ as either a Lagrangian subalgebra of $\mf{d}$, or a Lagrangian relation from $\g$ to itself. We let $G$ and $H\subset G\times G$ denote the simply connected (resp. connected) Lie groups corresponding to $\g$ and $\h$. 

\begin{figure}
\begin{center}
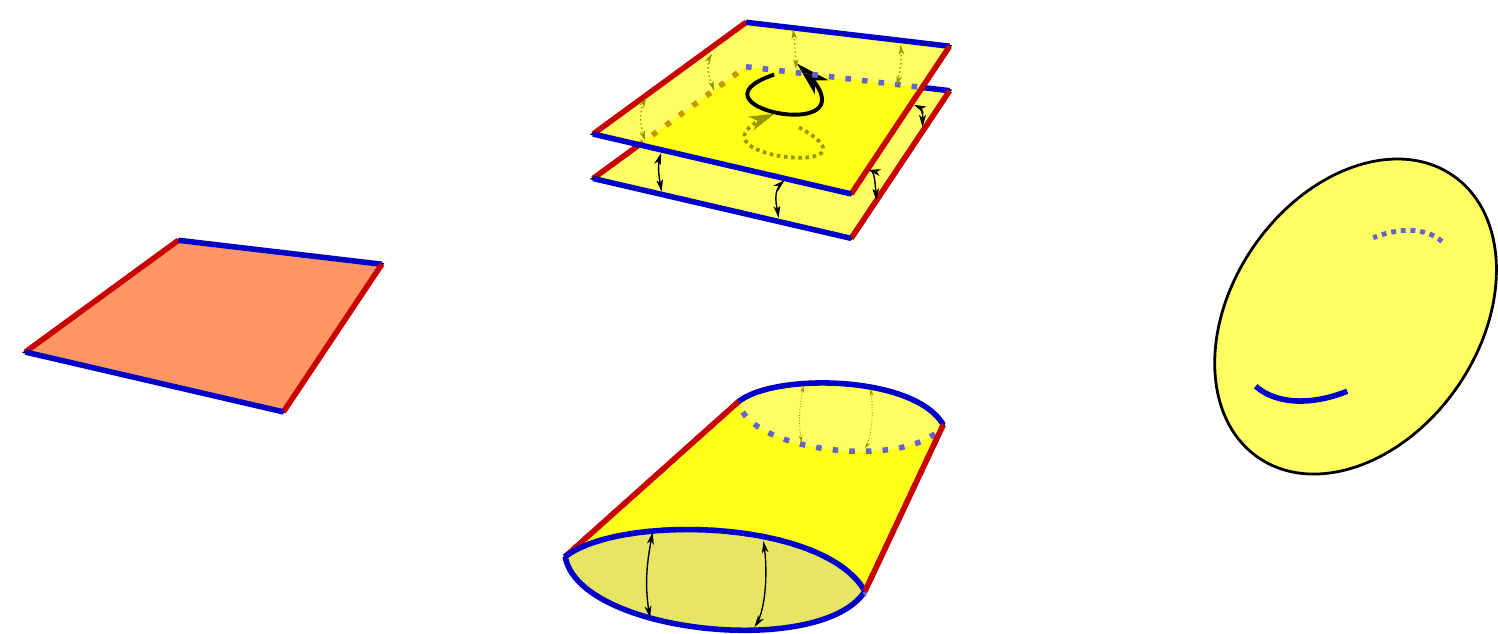
\caption{\label{fig:PBoalchDblSymp} 
The moduli spaces for the quilted surfaces pictured above may each be identified with the Lu-Weinstein double symplectic groupoid integrating the Lie-Poisson structures on $G$ and $H$.
In the leftmost quilted surface, a single domain carries the structure Lie algebra $\mf{d}$, while each of the domains in the other quilted surfaces carry the structure Lie algebra $\g$. The double-ended arrows between edges in the middle two quilted surfaces signify that those pairs of edges have been coloured by the corresponding Lagrangian relations. The rightmost quilted surface depicts a sphere containing two domain walls each coloured by $\h$.}
\end{center}
\end{figure}

Of course $G$ and $H$ are Poisson Lie groups, and we may construct the Lu-Weinstein double symplectic groupoid integrating the Lie-Poisson structures on $G$ and $H$ as a moduli space $\mc{M}$, as in \cref{ex:LuWeinSympDbl}. Specifically, $\mc{M}$ is a moduli space of $\mf{d}$-valued connections over a square, where alternating edges of the square are coloured with $\g$ and $\h$ as in the leftmost quilted surface pictured in \cref{fig:PBoalchDblSymp}.

However, since $\mf{d}=\g\oplus\bar \g$, we may equally well view $\mc{M}$ as a moduli space of $\g$ connections on two squares, where alternating edges of the first square are sewn to the corresponding edges of the second square using the Lagrangian relations $\g_\Delta\subset \g\oplus\bar\g$ and $\h\subset\g\oplus\bar\g$, respectively (see the middle two quilted surfaces in \cref{fig:PBoalchDblSymp}).

Now $\g_\Delta\subseteq\g\oplus\bar\g$ is just the graph of the identity map, so a $\g_\Delta$-coloured domain wall relates the (identical) structure groups in the incident domains by identifying them. Effectively, a $\g_\Delta$-coloured domain wall
 can be erased. Thus $\mc{M}$ may be viewed as a moduli space of $\g$-connections on the cylinder, where either boundary of the cylinder has been broken into two segments which are then sewn to each other using the Lagrangian relation $\h\subset\g\oplus\bar\g$.
 
 That is to say, $\mc{M}$ is a moduli space of $\g$-connections over the sphere $S^2$, where two (contractible, non-intersecting) domain walls $\gamma_1,\gamma_2\subset S^2$ have been coloured with $\h$ (see the rightmost quilted surface in \cref{fig:PBoalchDblSymp}).
 
Thus, in this (quasi-triangular) case, the Lu-Weinstein double symplectic groupoid can be identified with a certain moduli space of connections on the sphere. This fact was first discovered by Fock and Rosly \cite{Fock:1999wz} (in terms of graph connections) and Boalch \cite{Boalch:2007ty,Boalch:2009tn} (in the case where $\g$ is reductive and endowed with the standard quasi-triangular Lie bialgebra structure). Moreover, Boalch's perspective shows that placing these contractible domain walls on the sphere has the much deeper interpretation of prescribing certain irregular singularities for the connection.

\begin{remark}
In fact, Boalch \cite{Boalch:2009tn,Boalch:2011vt,Boalch:2007ty} also provides an interpretation of these $\h$-coloured domain wall in terms of quasi-Hamiltonian geometry (in the case where $\g$ is reductive and endowed with the standard quasi-triangular Lie-bialgebra structure). Indeed, in this case, a neighborhood of each domain wall may be identified with the quilted surface described in \cref{ex:FissionSpace} (for $r=1$), for which the corresponding moduli space is Boalch's fission space.
\end{remark}
\end{example}

\subsection{Poisson structures}
In this section, we will describe some Poisson structures which may be constructed using our approach. Later, in \cref{sec:PoisStrDirRed} we will generalize these results to the case where $\g$ is a quasi-triangular Lie quasi-bialgebra rather than a quadratic Lie algebra.

Let $(\Sigma,V)$ be a marked surface with boundary graph $\Gamma$. 
First, recall from \cref{thm:qhamIntro} that the moduli space $\mc{M}_{\Sigma,V}(G)$ for a marked surface $(\Sigma,V)$ carries a $(\mf{d}^{V_\Gamma},\g^{V_\Gamma},G^{E_\Gamma})$-quasi-Hamiltonian structure,
where $\mf{d}=\g\oplus\ol{\g}$, the Lagrangian Lie subalgebra $\mf{g}^{V_\Gamma}=\mf{g}_\Delta^{V_\Gamma}\subset\mf{d}^{V_\Gamma}$ is embedded as the diagonal, and  $\mf{d}^{V_\Gamma}$ acts on the $e\in E_\Gamma$-th factor of $G^{E_\Gamma}$ via the vector field 
\begin{equation}\label{eq:VertAct}\xi_{\on{out}(e)}^L-\eta_{\on{in}(e)}^R,\quad (\xi,\eta)\in\g^{V_\Gamma}\oplus\ol{\g}^{V_\Gamma}=\mf{d}^{V_\Gamma}.\end{equation}
 Here the superscripts ${}^L$,${}^R$ denote left,right invariant vector fields. 
The bivector field on $\mc{M}_{\Sigma,V}(G)$ is computed in \cref{sec:qPoisModSpc} and leads to the result of \cite[Theorem~3]{LiBland:2012vo}, which we summarize briefly.\footnote{In fact, the computation in \cref{sec:qPoisModSpc} results in  minus the bivector field described in \cite{LiBland:2012vo}, due to us orienting $\partial\Sigma$ in the opposite way. 

Strictly speaking, the bivector field on $\mc{M}_{\Sigma,V}(G)$ depends on the choice of a complement $\mf{k}\subset\mf{d}^{V_\Gamma}$ to $\h=\g^{V_\Gamma}$. In this case, $\mf{k}$ can be chosen canonically as $\mf{k}:=\g_{\bar\Delta}^{V_\Gamma}$, where
$$\g_{\bar\Delta}:=\{(\xi,-\xi)\in(\g\oplus\ol{\g})\}.$$
} 

If $a,b\in\Pi_1(\Sigma,V)$, let us represent them by transverse smooth paths $\alpha,\beta$. For any point $A$ in their intersection, let
\begin{align*}
\lambda(A)&=
\begin{cases}
1 &\text{if }A\in\partial\Sigma\\
2 &\text{otherwise}
\end{cases}\\
\on{sign}(A):=\on{sign}(\alpha,\beta;A)&=
\begin{cases}
1 &\text{if }(\beta'|_A,\alpha'|_A)\text{ is positively oriented}\\
-1 &\text{otherwise.}
\end{cases}
\end{align*}
as in \cref{fig:Signp}.
\begin{figure}[h]
\begin{center}
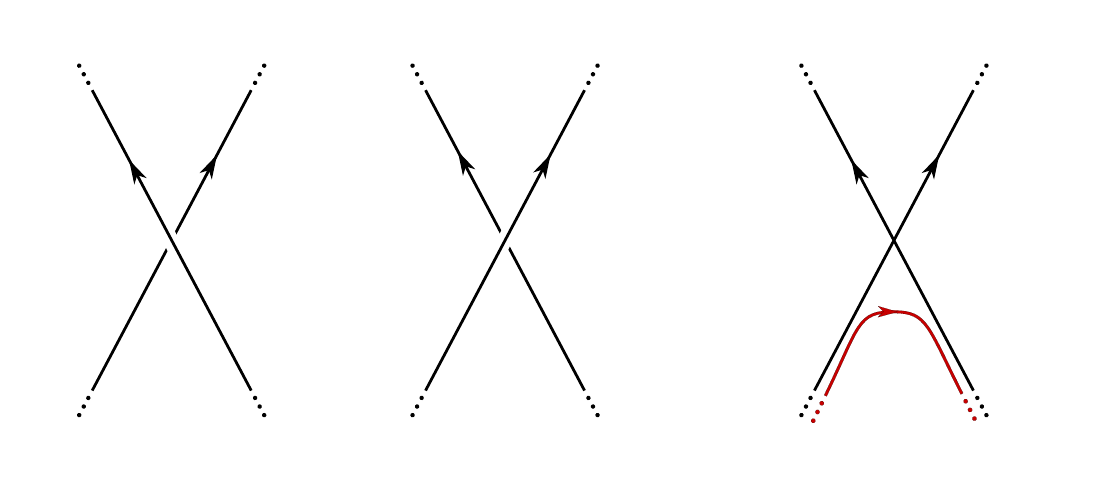
\end{center}
\caption{\label{fig:Signp} $\on{sign}(A)=\pm 1$ is determined by comparing the orientation of $\alpha$ and $\beta$ with that of $\Sigma$. The path $[\alpha_A^{-1}\beta_A]$ is shown to the right.}
\end{figure}

Let $\alpha_A$ denote the portion of $\alpha$ parametrized from the beginning up to the point $A$. Finally, let
$$(a,b):=\sum_A \lambda(A)\on{sign}(A) [\alpha_A^{-1}\beta_A]\in\mathbb{Z}\Pi_1(\Sigma,V).$$
(When $V$ contains only one point, this is a skew symmetrized version of the intersection form described in \cite{Turaev:2007jh}). 

Then for any $a,b\in\Pi_1(\Sigma,V)$,
\begin{equation}\label{eq:pi_hpair}
\pi\big(\on{ev}_{a}^*(g^{-1}\d g),\on{ev}_{b}^*(g^{-1}\d g)\big)=\frac{1}{2}(\Ad_{\on{ev}_{(a,b)}}\otimes 1)\,s,
\end{equation}
where $\on{ev}_a,\on{ev}_b:\mc{M}_{\Sigma,V}(G)=\Hom(\Pi_1(\Sigma,V),G)\to G$ denotes evaluation, $s\in\g\otimes\g$ is the inverse of the quadratic form, and $g^{-1}\d g$ denotes the left invariant Maurer-Cartan form on $G$.

\begin{example}[The two sided polygon, $P_2$]\label{ex:PoisP2}
Suppose $(\Sigma, V)=P_2$ is the disk with two marked points and $E_\Gamma=\{e_1,e_2\}$, as in \cref{fig:2MrkDisk}. Then we may identify $\mc{M}_{P_2}(G)$ with $G$ via $g_{e_1}$ (since $g_{e_2}=g_{e_1}^{-1}$).
Under this identification, the bivector field is trivial, $\pi_{P_2}=0$ (cf. \cite{LiBland:2012vo}).

\begin{figure}
\begin{center}
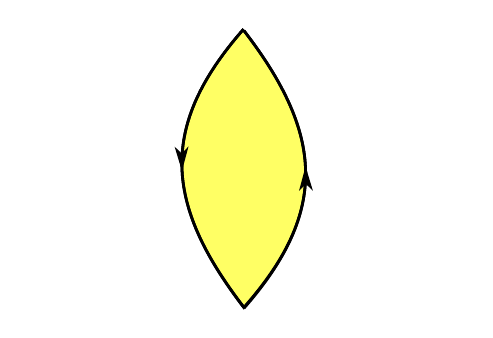
\caption{\label{fig:2MrkDisk} The cyclic product of holonomies, $g_{e_2}g_{e_1}$, is trivial, so $g_{e_2}=g_{e_1}^{-1}$.}
\end{center}
\end{figure}
\end{example}

\subsubsection{Colouring edges}\label{sec:PoisColSurf}
Suppose now that we colour each marked point $v\in V_\Gamma$ with a Lagrangian subalgebra 
$\mf{l}_v\subseteq \mf{d},$ 
 and each edge $e\in E_\Gamma$ with a submanifold $S_e\subseteq G$ in a compatible way: Specifically, we require that $$S:=\prod_{e\in E_\Gamma}S_e\subseteq G^{E_\Gamma}$$ be $\mf{l}$-invariant, where
$$\mf{l}:=\bigoplus_{v\in V_\Gamma}\mf{l}_v\subseteq \mf{d}^{V_\Gamma},$$
and the action is given in \cref{eq:VertAct}. Then \cref{thm:reductionIntro} implies that 
$$\mc{M}_{red}:=\mu^{-1}(S)/(\mf{l}\cap\g^{V_\Gamma})$$
is a Poisson manifold (provided it is a manifold). 

It is not difficult to describe the bivector field on $\mu^{-1}(S)/(\mf{l}\cap\g^{V_\Gamma})$. First, let $\g_\Delta\subseteq\mf{d}$ denote the diagonal and  $\g_{\bar\Delta}\subseteq\mf{d}$ the off-diagonal:
$$\g_{\bar\Delta}:=\{(\xi,-\xi)\in(\g\oplus\ol{\g})\}.$$
Each Lagrangian Lie subalgebra $\mf{l}_v\subseteq\mf{d}^{V_\Gamma}$ defines an element $\tau_v\in \wedge^2\big(\h/(\mf{l}\cap\h)\big)$ by the equation $$\mf{l}_v=\{(\alpha+\tau_v^\sharp\alpha)\mid \alpha\in\g_{\bar\Delta},\;\la\alpha,\mf{l}_v\cap\g_\Delta\ra=0\}+\mf{l}_v\cap\g_\Delta.$$
Here $\tau_v^\sharp\alpha\in \g_\Delta/(\mf{l}_v\cap\g_\Delta)$ is defined by
$$\tau_v^\sharp\alpha:=\frac{1}{2}\sum_{i,j}\tau^{ij}_v\la \alpha,\xi_i\ra\xi_j-\frac{1}{2}\sum_{i,j}\tau^{ij}_v\la \alpha,\xi_j\ra\xi_i$$
when we represent $\tau_v$ as $\tau_v=\frac{1}{2}\sum_{i,j}\tau^{ij}_v\xi_i\wedge\xi_j$.

We have the following theorem:
\begin{theorem}\label{thm:PoissThmIntro}
If the intersection, $\mu^{-1}(S)$, of $S\times\mc{M}_{\Sigma,V}(G)$ with the graph of $\mu$ is clean, and the $\mf{l}\cap\g^{V_\Gamma}$-orbits of $\mu^{-1}(S)$ form a regular foliation,
then the bivector field
$$\pi+\sum_{v\in V_\Gamma}\rho_v(\tau_v)\in \Gamma\big(\wedge^2 T(\mu^{-1}(S))/\rho(\mf{l}\cap\g^{V_\Gamma})\big)$$
is $\mf{l}\cap \g^{V_\Gamma}$ invariant and descends to define the Poisson structure on $\mu^{-1}(S)/(\mf{l}\cap\g^{V_\Gamma})$. Here  $\rho:\g^{V_\Gamma}\to\mf{X}(\mc{M}_{\Sigma,V}(G))$ denotes the action by infinitesimal gauge transformations at the marked points, and $\rho_v$ is the restriction of $\rho$ to the $v\in V_\Gamma$-th factor. 

Moreover, for any $\mf{l}$-orbit $O\subseteq S$, the image of $\mu^{-1}(O)$ in $\mu^{-1}(S)/(\mf{l}\cap\g^{V_\Gamma})$ will be a symplectic leaf.
\end{theorem}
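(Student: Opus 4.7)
The strategy is to recognize the displayed bivector as the image, under a change-of-Lagrangian, of the quasi-Hamiltonian bivector $\pi$ of \cref{thm:qhamIntro}. The quasi-Hamiltonian structure on $\mc M_{\Sigma,V}(G)$ is defined relative to the canonical complement $\g_{\bar\Delta}^{V_\Gamma}\subset \mf d^{V_\Gamma}$ of $\h=\g^{V_\Gamma}$; replacing the Lagrangian $\h$ by $\mf l$ amounts to rewriting the same Dirac-geometric data in a different frame. The general formalism (to be developed in \cref{thm:PartRed,thm:ExactPartRed}) encodes reduction by $(\mf l,S)$ as a morphism of Manin pairs from $(\mf d^{V_\Gamma},\h)$ to $(\mf d^{V_\Gamma},\mf l)$ followed by the standard restriction to $\mu^{-1}(S)$ and quotient by the residual action of $\mf l\cap\h$. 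The explicit formula is then obtained by tracking the effect of this frame change on $\pi$.

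First I would isolate the local algebraic content at a single vertex $v$. The two Lagrangians $\mf l_v$ and $\g_\Delta=\h_v$ of $\mf d$ intersect in $\mf l_v\cap\g_\Delta$, and are therefore related by the unique bivector $\tau_v\in\wedge^2(\g_\Delta/(\mf l_v\cap\g_\Delta))$ specified by the formula in \cref{sec:PoisColSurf}; concretely, the graph of $\tau_v^\sharp$ on the orthogonal of $\mf l_v\cap\g_\Delta$ inside $\g_{\bar\Delta}$ supplements $\mf l_v\cap\g_\Delta$ to give $\mf l_v$. Applying the general change-of-Lagrangian rule for quasi-Hamiltonian bivectors (which adds the image under the infinitesimal action of the bivector relating the two Lagrangians), one obtains that the correct bivector expressing the same Dirac structure with respect to $\mf l_v$ is $\pi+\rho_v(\tau_v)$. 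Summing over $v\in V_\Gamma$ yields $\pi+\sum_v\rho_v(\tau_v)$. Next, I would verify the geometric properties claimed: (i) under the cleanness hypothesis $\mu^{-1}(S)$ is a submanifold and the bivector restricts to a section of $\wedge^2 T(\mu^{-1}(S))/\rho(\mf l\cap\g^{V_\Gamma})$; (ii) it is $\mf l\cap\g^{V_\Gamma}$-invariant, which follows because both $\pi$ and each $\tau_v$ are $\mf l_v\cap\g_\Delta$-invariant by construction; (iii) under the regular-foliation hypothesis it descends to a bivector on the quotient. The resulting bracket on $C^\infty(\mu^{-1}(S)/(\mf l\cap\g^{V_\Gamma}))$ must be checked to coincide with the Poisson bracket provided abstractly by \cref{thm:reductionIntro}(2); this is a computation on pull-backs $p^*f,p^*g$ via $p:\mu^{-1}(S)\to\mu^{-1}(S)/(\mf l\cap\g^{V_\Gamma})$ and reduces, using the moment map condition, to the algebraic identity used to define $\tau_v$.

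Finally, for the symplectic-leaf statement I would apply \cref{thm:reductionIntro}(3) orbit-wise: since the moment map of \cref{thm:qhamIntro} is exact, for any single $\mf l$-orbit $O\subseteq S$ the quotient $\mu^{-1}(O)/(\mf l\cap\g^{V_\Gamma})$ is symplectic. These subsets partition the Poisson manifold $\mu^{-1}(S)/(\mf l\cap\g^{V_\Gamma})$, and their tangent spaces coincide with the image of the sharp map of $\pi+\sum_v\rho_v(\tau_v)$ (this last identification is where the explicit formula is used), so they are exactly the symplectic leaves. The main obstacle is the second paragraph: carefully establishing the change-of-Lagrangian formula for the bivector with the precise sign and normalization that match the definition of $\tau_v$ in the statement, and showing that its validity is preserved after restricting to $\mu^{-1}(S)$ and quotienting by $\mf l\cap\g^{V_\Gamma}$. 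Once this algebraic core is in place, the remaining invariance, descent, and leaf assertions follow from the reduction theorem already in hand.
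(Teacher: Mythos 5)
Your proposal follows essentially the same route as the paper: the paper proves this theorem by direct appeal to its general bivector-reduction theorem (\cref{thm:BivPartRed}), whose content is precisely the change-of-Lagrangian-complement mechanism you describe (the element $\tau_v$ arising from writing $\mf{l}_v$ as a graph over the canonical complement $\g_{\bar\Delta}$ modulo $\mf{l}_v\cap\g_\Delta$, cf.\ \cref{lem:tautoComp}), and it obtains the symplectic leaves from \cref{thm:ExactPartRed} applied orbit-wise, exactly as you do. The only cosmetic difference is that you partially re-derive the general reduction formula in this special Lagrangian case rather than citing it.
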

\begin{proof}
This will follow from \cref{thm:BivPartRed}, while the statement for the symplectic leaves will follow from \cref{thm:ExactPartRed}.
\end{proof}
\begin{remark}
As in \cref{sec:DomWallBrnch}, one may also sew domains together to obtain Poisson structures on the moduli spaces of branched surfaces. The general reduction statement is \cref{thm:BivPartRed}. 
\end{remark}

\begin{example}[Double Poisson Lie group \cite{LiBland:2012vo}]\label{ex:DblPoisLieGrp}
 Suppose that $\g=\mf{e}\oplus\mf{f}$ as a vector space, where $\mf{e},\mf{f}\subseteq\g$ are Lagrangian Lie subalgebras, and that $\mf{e},\mf{f}\subseteq\g$ integrate to Lie subgroups $E,F\subseteq G$ such that $E\cap F=1$. Let $\Sigma$ be a disk with two marked points labelled as in \cref{fig:DblPoissLieGrpDsk}. We colour the edges with the full group $G$, 
\begin{align*}
S_{e_1}&=G,&S_{e_2}&=G,
\end{align*}
and the vertices as
\begin{align*}
\mf{l}_{v_1}&=\mf{e}\oplus\mf{f},&\mf{l}_{v_2}&=\mf{f}\oplus\mf{e}.
\end{align*}
Therefore, $$\mu^{-1}(S)=\mc{M}_{P_2}=\{(g_1,g_2)\in G\times G\mid g_1g_2=1\}.$$ Meanwhile the residual gauge transformations, 
$$\bigoplus_{v\in V_\Gamma}\mf{l}_v\cap\g_\Delta=0$$
(since $\mf{e}\cap\mf{f}=0$). Thus we may identify the moduli space $\mc{M}_{red}\cong G$, via the map $(g_1,g_2)\to g_1$.

\begin{figure}[h]
\begin{center}
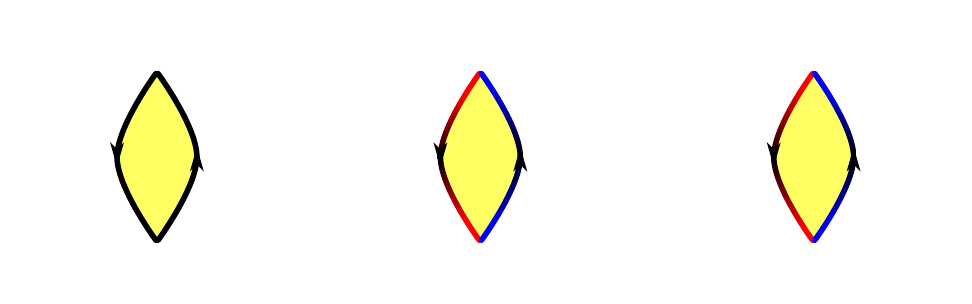
\caption{\label{fig:DblPoissLieGrpDsk}The double Poisson Lie group.
The holonomies $g_i\in G$ satisfy $g_1g_2=1$.}
\end{center}
\end{figure}
Next, we compute the bivector field, $\pi=\pi_{P_2}+\rho_{v_1}(\tau_{v_1})+\rho_{v_2}(\tau_{v_2})$. Now, as explained in \cref{ex:PoisP2}, $\pi_{P_2}=0$ so only the term $\sum_i\rho_{v_i}(\tau_{v_i})$ contributes.
 Now,
$$\tau_{v_1}=\frac{1}{2}\sum_i (\zeta^i,\zeta^i)\wedge(\eta_i,\eta_i),\quad \tau_{v_2}=\frac{1}{2}\sum_i (\eta_i,\eta_i)\wedge(\zeta_i,\zeta_i),$$
where $\{\eta_i\}\subset\mf{e}$ and $\{\zeta^i\}\subset\mf{f}$ are basis in duality.
Therefore, 
$$\pi=\frac{1}{2}\sum_i (\zeta^i)^L\wedge (\eta_i)^L+(\eta_i)^R\wedge(\zeta^i)^R.$$ 
In fact, $\pi$ defines the Poisson Lie group structure on $G$ corresponding the double Lie bialgebra structure on $\g$ resulting from the Manin triple $(\g,\mf{e},\mf{f})$ \cite{thesis-3,LiBland:2012vo}.
The symplectic leaves are computed as the restriction of the $\mf{l}$-orbits, which in this case can be seen to correspond to the orbits of the dressing action on $G$.\footnote{In fact computing the symplectic leaves via \cref{thm:PoissThmIntro} is precisely the computation found in \cite{LiBland:2009ul}.}

\begin{remark}
In the case where $\g$ is a quasi-triangular Lie-bialgebra, the double Poisson Lie group was constructed as a moduli space of graph connections in the work of Fock and Rosly \cite{Fock:1999wz}.
\end{remark}

\end{example}

\begin{example}[Poisson Lie group \cite{LiBland:2012vo}]\label{ex:PoisLie}
Suppose the Lie groups $G,E,F$ and their Lie algebras are as in \cref{ex:DblPoisLieGrp}, and let $\Sigma$ be a disk with two marked points, as in \cref{fig:PoissLieGrpDsk}. We colour the vertices as in \cref{ex:DblPoisLieGrp}, but 
we colour the edges as 
\begin{align*}
S_{e_1}&=E,&S_{e_2}&=G.
\end{align*}
Therefore, $$\mu^{-1}(S)=\{(e,g)\in E\times G\mid eg=1\},$$ while the residual gauge transformations are trivial, as before. Thus we may identify the moduli space $\mc{M}_{red}\cong E$, via the map $(e,g)\to e$.

\begin{figure}
\begin{center}
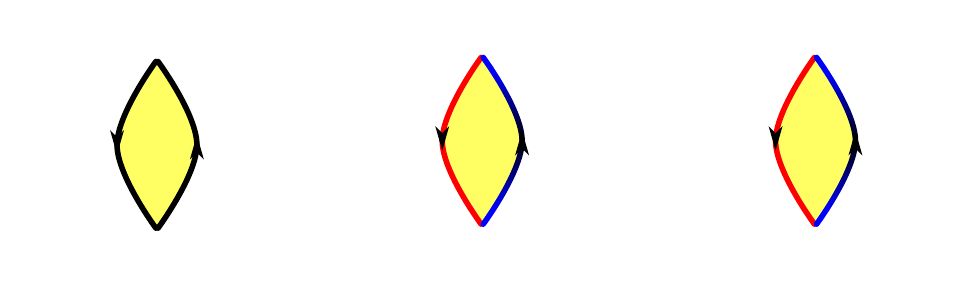
\caption{\label{fig:PoissLieGrpDsk} The Poisson Lie group.
The holonomies $g\in G$ and $e\in E$ satisfy $eg=1$.}
\end{center}
\end{figure}
The bivector field, $\pi$, on $E$ is computed to be the restriction of the bivector field 
$$\frac{1}{2}\sum_i (\zeta^i)^L\wedge (\eta_i)^L+(\eta_i)^R\wedge(\zeta^i)^R.$$ 
on $G$ to $E\subseteq G$. Thus, $\pi$ defines the Poisson Lie group structure on $E$ corresponding the Manin triple $(\g,\mf{e},\mf{f})$ \cite{thesis-3,LiBland:2012vo}.
As before, the symplectic leaves are computed as the restriction of the $\mf{l}$-orbits. Once again, they are precisely the orbits of the dressing action on $E$.
\end{example}

\begin{example}[Poisson homogenous spaces]
Suppose the Lie groups $G,E,F$ and their Lie algebras are as in \cref{ex:DblPoisLieGrp}, and let $\Sigma$ be a disk with two marked points, as in \cref{fig:PoisHomDsk}. Suppose further that $\mf{h}\subseteq\g$ is a Lagrangian subalgebra such that $\mf{k}:=\mf{h}\cap\mf{e}$ integrates to a closed Lie subgroup $K\subseteq E$.
 We colour the edges as in \cref{ex:PoisLie}, but we colour the vertices as
\begin{align*}
\mf{l}_{v_1}&=\mf{e}\oplus\mf{h},&\mf{l}_{v_2}&=\mf{f}\oplus\mf{e}.
\end{align*}
Therefore, $$\mu^{-1}(S)=\{(e,g)\in E\times G\mid eg=1\},$$ while the residual gauge transformations are $G\times K$ acting as $$(g,k):(e,g)\to (ek^{-1},kg).$$  We may identify the moduli space $\mc{M}_{red}\cong E/K$, via the map $(e,g)\to [e]$.
The Poisson structure on $\mc{M}_{red}$ is the Poisson homogenous structure corresponding to the Lagrangian Lie subalgebra $\h\subseteq\g$ in Drinfel'd's classification \cite{Drinfeld:1993il}. We leave it to the reader to compute the bivector field and symplectic leaves on $E/K$ via \cref{thm:PoissThmIntro}.

\begin{figure}
\begin{center}
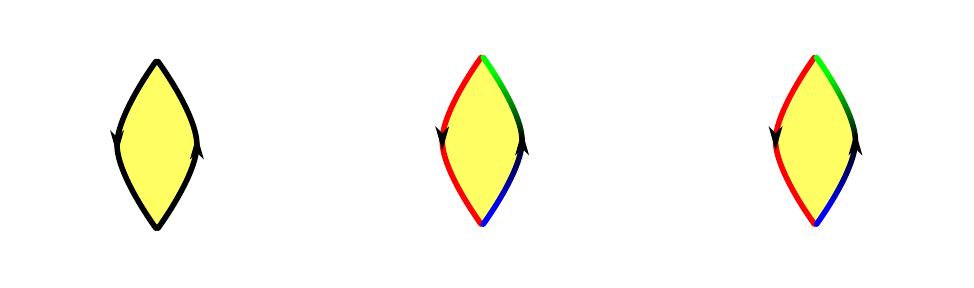
\caption{\label{fig:PoisHomDsk} The Poisson homogeneous space corresponding to the Lagrangian Lie subalgebra $\mf{h}\subseteq\g$.
The holonomies $g\in G$ and $e\in E$ satisfy $eg=1$.}
\end{center}
\end{figure}
\end{example}

\begin{example}[Affine Poisson structure on $G$ \cite{thesis-3}]
Generalizing the setup found in \cref{ex:DblPoisLieGrp}, we suppose that $\mf{h}\subseteq\g$ is a Lagrangian subalgebra which is also complementary (as a vector space) to $\mf{e}\subseteq\g$.
As in \cref{ex:DblPoisLieGrp}, we colour the edges with the full group $G$, but the vertices as
\begin{align*}
\mf{l}_{v_1}&=\mf{e}\oplus\mf{h},&\mf{l}_{v_2}&=\mf{f}\oplus\mf{e},
\end{align*}
(cf. \cref{fig:AffineGrpDsk}).
As before, we have  $\mu^{-1}(S)=\mc{M}_{P_2}$, and the residual gauge transformations are trivial. 
 Thus, we may identify the moduli space $\mc{M}_{red}\cong G$, via the map $(g_1,g_2)\to g_1$.

\begin{figure}[h]
\begin{center}
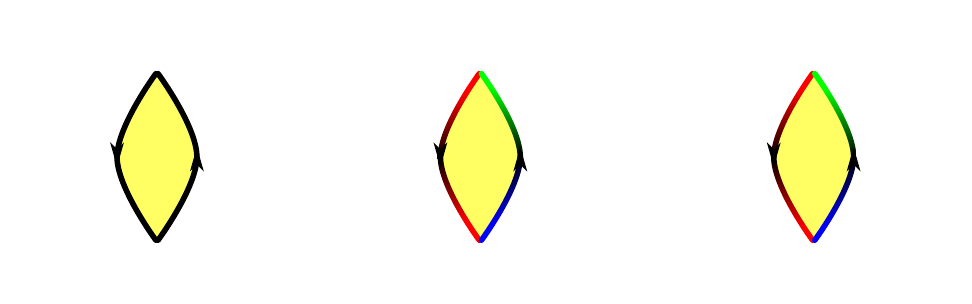
\caption{\label{fig:AffineGrpDsk}The affine Poisson structure on $G$.
The holonomies $g_i\in G$ satisfy $g_1g_2=1$.}
\end{center}
\end{figure}
Meanwhile the bivector field, $\pi$, is 
$$\pi=\frac{1}{2}\sum_i (\zeta_{\mf{f}}^i)^L\wedge (\eta_i)^L+(\eta_i)^R\wedge(\zeta_{\mf{h}}^i)^R,$$
where the bases $\{\zeta_{\mf{f}}^i\}\subseteq\mf{f}$ and $\{\zeta_{\mf{h}}^i\}\subseteq\mf{h}$ are both dual to $\{\eta_i\}\subseteq\mf{g}$.
In fact, $\pi$ defines the affine Poisson structure on $G$ corresponding to the Manin triples $(\g,\mf{e},\mf{f})$ and $(\g,\mf{e},\mf{h})$ (as described by Lu \cite{thesis-3}).
The symplectic leaves are computed as the restriction of the $\mf{l}$-orbits.
\end{example}

\begin{example}[Lu Yakimov Poisson homogenous spaces \cite{LiBland:2012vo}]
Suppose the Lie groups $G,E,F$ and their Lie algebras are as in \cref{ex:DblPoisLieGrp}, and that $C\subseteq G$ is a closed Lie subgroup whose Lie algebra $\mf{c}\subseteq\g$ is coisotropic. Let $\Sigma$ be a disk with two marked points and edges and vertices labelled as in \cref{fig:LuYakimovDsk}. We colour the edges with the full group $G$, 
\begin{align*}
S_{e_1}&=G,&S_{e_2}&=G,
\end{align*}
and the vertices as
\begin{align*}
\mf{l}_{v_1}&=\mf{l}_{\mf{c}}=\{(\xi,\xi')\in\mf{c}\oplus\mf{c}\mid\xi-\xi'\in\mf{c}^\perp\},&\mf{l}_{v_2}&=\mf{f}\oplus\mf{e}\end{align*}
(cf. \cref{fig:LuYakimovDsk}).
Therefore, $$\mu^{-1}(S)=\mc{M}_{P_2}=\{(g_1,g_2)\in G\times G\mid g_1g_2=1\}.$$  Meanwhile the residual gauge transformations, 
$$\bigoplus_{v\in V_\Gamma}\mf{l}_v\cap\g_\Delta=\mf{c}_{v_0},$$
where $\mf{c}_{v_0}=\{(\xi,\xi)\in\mf{c}\oplus\mf{c}\}\subseteq \mf{l}_{v_0}$.
Thus, up to a gauge transformation, 
$(g_1,g_2)\sim(g_1c^{-1},cg_2)$
 (for any $c\in C$), and we may identify the moduli space $\mc{M}_{red}\cong G/C$, via the map $(g_1,g_2)\to [g_1]$.

\begin{figure}
\begin{center}
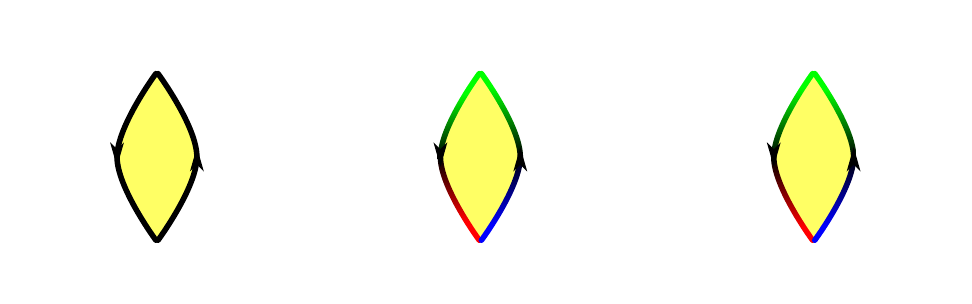
\caption{\label{fig:LuYakimovDsk}Lu-Yakimov Poisson homogenous spaces.
The holonomies $g_i\in G$ satisfy $g_1g_2=1$.}
\end{center}
\end{figure}
The bivector field, $\pi$, on $G/C$ can be computed to be the projection of the bivector field 
$$\frac{1}{2}\sum_i (\eta_i)^R\wedge(\zeta^i)^R.$$ 
on $G$ to $G/C$. Thus, $\pi$ defines the Lu-Yakimov Poisson structure on $G/C$ corresponding the Manin triple $(\g,\mf{e},\mf{f})$ \cite{Lu06,LiBland:2012vo}.
\end{example}

\begin{example}[Quasi-Triangular Poisson Lie groups \cite{Boalch:1999wk,Boalch:2001fw,Boalch:2001gw,Boalch:2011vt,Boalch:2009tn,Boalch:2007ty}]

As in \cref{ex:DblSymplBoalch}, suppose that $(\mf{d};\g_\Delta,\mf{h})$  is the Manin triple corresponding to a (non-degenerate) quasi-triangular Lie-bialgebra where $\g_\Delta\subset \g\oplus\bar\g=\mf{d}$ is the diagonal, and $\mf{h}\subset\g\oplus\bar\g$ is a complementary Lagrangian subalgebra. As before we may view $\mf{h}$ as either a Lagrangian subalgebra of $\mf{d}$, or a Lagrangian relation from $\g$ to itself. We let $G$ and $H\subset G\times G=D$ denote the simply connected (resp. connected) Lie groups corresponding to $\g$, $\h$ and $\mf{d}$. 

\begin{figure}
\begin{center}
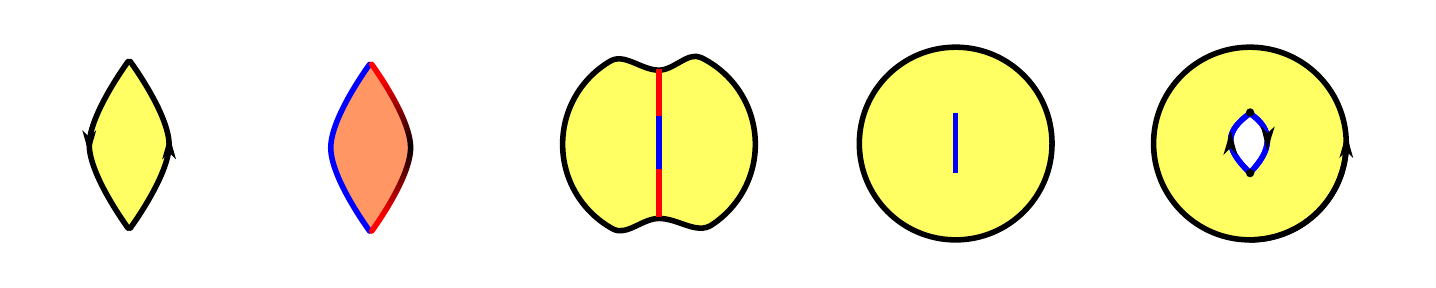
\caption{\label{fig:PoissLieGrpBoalch}The dual Poisson Lie group, $H$. The holonomies $h_1,d_2\in D=G\times G$ along the edges $e_1$ and $e_2$ (respectively) satisfy $h_1d_2=1$, with $h_1\in H$; while the holonomies $g_3,g_4\in G$ along the edges $e_3$ and $e_4$ satisfy $(g_3^{-1},g_4)\in H$}
\end{center}
\end{figure}

We may construct the Lie-Poisson structure on $H$ by identifying it with a moduli space $H\cong \mc{M}$, as in \cref{ex:PoisLie}. More specifically let $\Sigma$ be a disk with two marked points, with edges and vertices labelled as in \cref{fig:PoissLieGrpBoalch}. As pictured in \cref{fig:PoissLieGrpBoalch}, we color the edges as
\begin{align*}
S_{e_1}&=H,&S_{e_2}&=D,
\end{align*}
and the vertices as
\begin{align*}
\mf{l}_{v_1}&=\mf{h}\oplus\mf{g}_\Delta,&\mf{l}_{v_2}&=\mf{g}_\Delta\oplus \mf{h}.
\end{align*}
Therefore, $$\mu^{-1}(S)=\{(h_1,d_2)\in H\times D\mid h_1d_2=1\}.$$ Meanwhile the residual gauge transformations are trivial (since $\h$ and $\g_\Delta$ are complements). Thus we may identify the moduli space $\mc{M}\cong H$, via the map $(h_1,d_2)\to h_1$.

As in \cref{ex:DblSymplBoalch}, since $\mf{d}=\g\oplus\bar\g$, we may also view $\mc{M}$ as a moduli space of $\g$ connections on two stacked copies of $\Sigma$, where their left edges are sewn using the Lagrangian relation $\h\subseteq \g\oplus\bar\g$, while their top and bottom right half-edges are sewn using the relation $\g_\Delta\subseteq \g\oplus\bar\g$. In the same fashion as one opens a pita-pocket, we may imagine pulling the two copies of $\Sigma$ apart starting from the middle of their right edges (leaving the left edges incident to each other), in which case the resulting quilted surface is pictured in the middle of \cref{fig:PoissLieGrpBoalch}.

Since $\g_\Delta\subseteq \g\oplus\bar\g$ is just the graph of the identity map, a $\g_\Delta$-coloured domain wall relates the (identical) structure groups in the incident domains by identifying them. Effectively, a $\g_\Delta$-coloured domain wall can be erased. Thus $\mc{M}$ may be viewed as a moduli space of $\g$-connections on the annulus, $\Sigma'$, where the inner boundary has been broken into two segments which are then sewn together using the Lagrangian relation $\h\subseteq \g\oplus\bar\g$.

More explicitly, the inner boundary of the annulus $\Sigma'$ has two marked points, $v_3$ and $v_4$, dividing it into two segments $e_3$ and $e_4$, while the outer boundary has no marked points. As pictured in the right two quilted surfaces of \cref{fig:PoissLieGrpBoalch}, we colour the edges as 
$$S=S_{e_3}\times S_{e_4}=\{(g_3,g_4)\in G\times G\mid (g_3^{-1},g_4)\in H\}\cong H $$
and the vertices as
\begin{align*}
\mf{l}_{v_3}&=\h^{-1}\subseteq \g\oplus\bar\g,&\mf{l}_{v_4}&=\h\subseteq \g\oplus\bar\g,
\end{align*}
where $\h^{-1}=\{(\xi,\eta)\in\g\oplus\bar\g\mid (\eta,\xi)\in \h\}$ (the inverse refers to the pair-groupoid structure).

Since $\mu$ is a diffeomorphism, $\mu^{-1}(S)\cong S\cong H.$ The residual gauge transformations are trivial,
since $\h$ and $\g_\Delta$ are complements. As before, we may identify the moduli space $\mc{M}\cong H$.

That is to say, the (dual) Poisson Lie group $H\cong \mc{M}$ is naturally a moduli space of flat $\g$-connections over the disk containing a (contractible) $\h$-coloured domain wall. Thus, in this (quasi-triangular) case, the Poisson Lie group $H$ can be identified with a certain moduli space of flat $\g$-connections on the disk. This fact was first observed by Fock and Rosly \cite{Fock:1999wz} (in terms of graph connections) and Boalch \cite{Boalch:1999wk,Boalch:2001fw,Boalch:2001gw,Boalch:2011vt,Boalch:2009tn,Boalch:2007ty} (in the case where $\g$ is reductive and endowed with the standard quasi-triangular Lie-bialgebra structure). Moreover, Boalch's perspective shows that placing this contractible domain wall on the disk has the much deeper interpretation of prescribing a certain irregular singularity for the connection.

\begin{remark}
In fact, Boalch \cite{Boalch:2009tn,Boalch:2011vt,Boalch:2007ty} also provides an interpretation of this $\h$-coloured domain wall in terms of quasi-Hamiltonian geometry (in the case where $\g$ is reductive and endowed with the standard quasi-triangular Lie-bialgebra structure). Indeed, in this case, a neighborhood of the domain wall may be identified with the quilted surface described in \cref{ex:FissionSpace} (for $r=1$), for which the corresponding moduli space is Boalch's fission space.
\end{remark}

\end{example}

\section{Background}
\subsection{Courant algebroids}
Courant algebroids and Dirac structures are the basic tools in the theory of generalized moment maps.
\begin{definition}[\cite{ManinTriplesBi}]
A \emph{Courant algebroid} is a vector bundle $\mbb{E}\to M$ endowed with a non-degenerate symmetric bilinear form $\la\cdot,\cdot\ra$ on the fibres, a bundle map $\mbf{a}:\mbb{E}\to TM$ called the \emph{anchor} and a bracket $\Cour{\cdot,\cdot}:\sect(\mbb{E})\times\sect(\mbb{E})\to\sect(\mbb{E})$ called the \emph{Courant bracket} satisfying the following axioms for sections $e_1,e_2,e_3\in\sect(\mbb{E})$ and functions $f\in C^\infty(M)$:
\begin{enumerate}
\item[c1)] $\Cour{e_1,\Cour{e_2,e_3}}=\Cour{\Cour{e_1,e_2},e_3}+\Cour{e_2,\Cour{e_1,e_3}}$, 
\item[c2)] $\mbf{a}(e_1)\la e_2,e_3\ra=\la\Cour{e_1,e_2},e_3\ra+\la e_2,\Cour{e_1,e_3}\ra$, 
\item[c3)] $\Cour{e_1,e_2}+\Cour{e_2,e_1}=\mbf{a}^*\d\la e_1,e_2\ra$.
\end{enumerate}
Here $\mbf{a}^*:T^*M\to\mbb{E}^*\cong\mbb{E}$ is the map dual to the anchor.

A subbundle $E\subseteq\mbb{E}\rvert_S$ along a submanifold $S\subseteq M$ is called a \emph{Dirac structure with support on $S$} if 
$$e_1\rvert_S,e_2\rvert_S\in\sect(E)\Rightarrow \Cour{e_1,e_2}\rvert_S\in\sect(E),$$
(it is \emph{involutive}) and $E^\perp=E$ (it is \emph{Lagrangian}). If $S=M$, then $E$ is simply called a \emph{Dirac structure}.
\end{definition}
\begin{remark}
As shown in \cite{Uchino02,Roytenberg99}, one may also derive the following useful identities from the Courant axioms:
\begin{enumerate}
\item[c4)] $\Cour{e_1,fe_2}=f\Cour{e_1,e_2}+(\mbf{a}(e_1)f )e_2$ 
\item[c5)] $\Cour{fe_1,e_2}=f\Cour{e_1,e_2}-(\mbf{a}(e_2)f) e_1+\la e_1,e_2\ra\mbf{a}^*\d f$
\item[c6)] $\mbf{a}\Cour{e_1,e_2}=[\mbf{a}(e_1),\mbf{a}(e_2)]$
\end{enumerate}
\end{remark}
For any Courant algebroid $\mbb{E}$, we denote by $\ol{\mbb{E}}$ the Courant algebroid with the same 
bracket and anchor, but with the metric negated.\footnote{Note that this also negates the map $\mbf{a}^*:T^*M\to\mbb{E}^*\cong\mbb{E}$, so axiom c3) still holds.}

\begin{example}
A Courant algebroid over a point is a quadratic Lie algebra. Dirac structures are Lagrangian Lie subalgebras.
\end{example}

\begin{example}[Standard Courant algebroid \cite{Courant:tm,Courant:1990uy}]
The vector bundle $\mbb{T}M:=TM\oplus T^*M$ is a Courant algebroid with metric
$$\la v_1+\mu_1,v_2+\mu_2\ra=\mu_1(v_2)+\mu_2(v_1),\quad v_1,v_2\in TM,\quad\mu_1,\mu_2\in T^*M$$
and bracket
$$\Cour{X+\alpha,Y+\beta}=[X,Y]+\Lied_X\beta-\iota_Yd\alpha,\quad X,Y\in \mf{X}(M),\alpha,\beta\in \Omega^1(M).$$

%
 
\end{example}

The standard Courant algebroid is an example of an important class of Courant algebroids called \emph{exact Courant algebroids}.
\begin{definition}[Exact Courant algebroids \cite{LetToWein,Severa:2001}]
A Courant algebroid $\mbb{E}\to M$ is called \emph{exact} if the sequence
\begin{equation}\label{eq:ExCourSeq}0\to T^*M\xrightarrow{\mbf{a}^*}\mbb{E}\xrightarrow{\mbf{a}} TM\to 0\end{equation}
is exact.
\end{definition}

If a Dirac structure $E\subseteq\mbb{E}$ is supported on $S\subseteq M$, then $\mbf{a}(E)\subseteq TS$, and $\mbf{a}^*\big(\ann(TS)\big)\subseteq E$.
\begin{definition}[Exact Dirac structures]
Suppose $\mbb{E}\to M$ is a Courant algebroid (not necessarily exact), and $E\subseteq \mbb{E}$ is a Dirac structure with support on $S\subseteq M$. We say that $E$ is an \emph{exact Dirac structure} if the sequence
\begin{equation}\label{eq:ExDirSeq}0\to \ann(TS)\xrightarrow{\mbf{a}^*} E\xrightarrow{\mbf{a}} TS\to 0\end{equation}
is exact.
\end{definition}

\begin{lemma}\label{lem:ExDirStr}
Suppose $\mbb{E}\to M$ is a Courant algebroid, and $E\subseteq \mbb{E}$ is a Dirac structure with support on $S\subseteq M$. Then the following two statements are equivalent:
\begin{enumerate} 
\item $E$ is an exact Dirac structure.
\item The Courant algebroid $\mbb{E}$ is exact along $S$ (that is, the sequence \cref{eq:ExCourSeq} is exact at every $x\in S$), and $\mbf{a}:E\to TS$ is surjective.
\end{enumerate}
 
\end{lemma}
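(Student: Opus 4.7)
My approach is pure linear algebra at each point $x \in S$, relying on two elementary facts about Courant algebroids that follow from the definitions:
(i) since $\mathbf{a}^*$ is dual to $\mathbf{a}$ under the metric, image$(\mathbf{a}^*) = (\ker \mathbf{a})^\perp$ inside $\mbb{E}_x$, which yields the rank identity
$\mathrm{rk}(\mathrm{image}\, \mathbf{a}) + \mathrm{rk}(\ker \mathbf{a}^*) = \dim M$;
and
(ii) since $E$ is Lagrangian, the computation $\la \mathbf{a}^*\alpha, e\ra = \alpha(\mathbf{a}(e))$ gives the characterization
$\mathbf{a}^*(\alpha) \in E \iff \alpha \in \ann(\mathbf{a}(E)).$
These two observations reduce everything to a dimension count, with the Lagrangian condition as the bridge between $E$ and annihilators of tangent data. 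I would also record at the start that $\mathbf{a} \circ \mathbf{a}^* = 0$ (a consequence of axioms c3 and c6), so that image$(\mathbf{a}^*) \subseteq \ker\mathbf{a}$ is automatic.

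\textbf{Direction $(1) \Rightarrow (2)$.} Surjectivity of $\mathbf{a}\colon E \to TS$ is contained in the definition of exactness of the Dirac sequence. Summing dimensions in the short exact sequence $0 \to \ann(TS) \to E \to TS \to 0$ gives $\mathrm{rk}(E) = \dim M$, and since $E$ is Lagrangian this forces $\mathrm{rk}(\mbb{E}) = 2\dim M$, the correct rank for exactness. To prove injectivity of $\mathbf{a}^*\colon T^*_xM \to \mbb{E}_x$, suppose $\mathbf{a}^*(\alpha) = 0$. Then in particular $\mathbf{a}^*(\alpha) \in E$, so by (ii) and the already-proven surjectivity $\mathbf{a}(E) = TS$, we conclude $\alpha \in \ann(T_xS)$; injectivity of $\mathbf{a}^*$ restricted to $\ann(TS)$ (given by the exact Dirac hypothesis) then yields $\alpha = 0$. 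The rank identity (i) now gives surjectivity of $\mathbf{a}_x\colon \mbb{E}_x \to T_xM$, and comparing $\mathrm{rk}(\ker \mathbf{a}_x) = \mathrm{rk}(\mbb{E}) - \dim M = \dim M = \mathrm{rk}(\mathrm{image}\,\mathbf{a}^*_x)$ promotes the inclusion image$(\mathbf{a}^*_x) \subseteq \ker \mathbf{a}_x$ to an equality. This establishes exactness of $\mbb{E}$ along $S$.

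\textbf{Direction $(2) \Rightarrow (1)$.} Exactness of the Dirac sequence requires three verifications. First, $\mathbf{a}^*(\ann(TS)) \subseteq E$: for $\alpha \in \ann(TS)$ and any $e \in E$, one has $\la \mathbf{a}^*\alpha, e\ra = \alpha(\mathbf{a}(e)) = 0$ since $\mathbf{a}(e) \in TS$, so $\mathbf{a}^*\alpha \in E^\perp = E$. Second, injectivity of $\mathbf{a}^*\lvert_{\ann(TS)}$ is immediate from injectivity of $\mathbf{a}^*$ on all of $T^*M$, which is part of the hypothesis that $\mbb{E}$ is exact along $S$. Third, for the kernel equality, take $e \in E$ with $\mathbf{a}(e) = 0$; by exactness of $\mbb{E}$ along $S$, we can write $e = \mathbf{a}^*(\alpha)$ for some $\alpha \in T^*_xM$, and then by (ii) the relation $\mathbf{a}^*\alpha \in E$ combined with the hypothesis $\mathbf{a}(E) = TS$ forces $\alpha \in \ann(T_xS)$, as desired. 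The converse inclusion $\mathbf{a}^*(\ann(TS)) \subseteq \ker(\mathbf{a}\lvert_E)$ is just $\mathbf{a} \circ \mathbf{a}^* = 0$.

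\textbf{Main obstacle.} There is no serious obstacle: the argument is a short exercise in the duality between $\mathbf{a}$ and $\mathbf{a}^*$. The only subtle point worth highlighting is that the Lagrangian identity $E^\perp = E$ is doing all the work in translating between the algebraic condition ``$\mathbf{a}^*\alpha$ lies in $E$'' and the tangential condition ``$\alpha$ annihilates $TS$,'' which is what links the two exact sequences to one another.
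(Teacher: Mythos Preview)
Your proof is correct and takes a genuinely different route from the paper's. The paper sets up a $3\times 3$ commutative diagram at each $x\in S$,
\[
\begin{array}{ccccccccc}
0 & \to & \ann(T_xS) & \to & E_x & \to & T_xS & \to & 0\\
  &     & \downarrow &     & \downarrow & & \downarrow & & \\
0 & \to & T^*_xM & \to & \mbb{E}_x & \to & T_xM & \to & 0\\
  &     & \downarrow &     & \downarrow & & \downarrow & & \\
0 & \to & T^*_xM/\ann(T_xS) & \to & E^*_x & \to & T_xM/T_xS & \to & 0
\end{array}
\]
with exact columns and the bottom row dual to the top, and then invokes the long exact sequence in cohomology together with the five lemma to transfer exactness between rows. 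Your argument instead isolates the two elementary identities (i) $\operatorname{image}(\mbf{a}^*)=(\ker\mbf{a})^\perp$ and (ii) $\mbf{a}^*\alpha\in E\Leftrightarrow\alpha\in\ann(\mbf{a}(E))$, and derives everything from a direct dimension count. Your approach is more self-contained and makes the role of the Lagrangian condition $E^\perp=E$ completely transparent; the paper's diagram-chase is more conceptual and would adapt more readily to analogous situations, but requires the reader to unpack the homological machinery. Both are short, and yours is arguably the cleaner of the two for this particular lemma.
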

\begin{proof}
Let $x\in S$, and consider the commutative diagram 
$$\begin{tikzpicture}
\mmat{m}{
0&\ann(T_xS)&E_x&T_xS&0\\
0&T^*_xM&\mbb{E}_x&T_xM&0\\
0&T^*_xM/\ann(T_xS)&E^*_x&T_xM/T_xS&0\\};
\draw[->] (m-1-1) edge (m-1-2)
	(m-1-2) edge node {$\mbf{a}^*$} (m-1-3)
	(m-1-3) edge node {$\mbf{a}$} (m-1-4)
	(m-1-4) edge (m-1-5)
	(m-1-2) edge (m-2-2)
	(m-1-3) edge (m-2-3)
	(m-1-4) edge (m-2-4)
	(m-2-1) edge (m-2-2)
	(m-2-2) edge node {$\mbf{a}^*$} (m-2-3)
	(m-2-3) edge node {$\mbf{a}$} (m-2-4)
	(m-2-4) edge (m-2-5)
	(m-2-2) edge (m-3-2)
	(m-2-3) edge (m-3-3)
	(m-2-4) edge (m-3-4)
	(m-3-1) edge (m-3-2)
	(m-3-2) edge node {$\mbf{a}^*$} (m-3-3)
	(m-3-3) edge node {$\mbf{a}$} (m-3-4)
	(m-3-4) edge (m-3-5);
\end{tikzpicture}$$
Note that the vertical sequences are exact. 

Suppose that $E$ is an exact Dirac structure. Then the top horizontal sequence is exact, by assumption.  The lower horizontal sequence is dual to the top sequence, and hence also exact. The five lemma then implies that all terms in the long exact sequence vanish. In particular, the central horizontal sequence is exact.

Conversely, suppose that the Courant algebroid $\mbb{E}$ is exact along $S$ and $\mbf{a}:E_x\to T_xS$ is surjective (and hence $T^*_xM/\ann(T_xS)\to E_x^*$ is injective). Once again, the five lemma implies that all terms in the long exact sequence vanish. We conclude that $E$ is an exact Dirac structure.
\end{proof}

\begin{example}[Action Courant algebroids \cite{LiBland:2009ul}]\label{ex:ActionCA}
Suppose $\mf{d}$ is a Lie algebra equipped with an invariant metric. 
Given a Lie algebra action $\rho\colon \mf{d}\to \mf{X}(M)$ on a manifold $M$, 
let $\mbb{E}=\mf{d}\times M$ with anchor map $\mbf{a}(\xi,m)=\rho(\xi)_m$, and with
the bundle metric coming from the metric on $\mf{d}$. As shown in
\cite{LiBland:2009ul}, the Lie bracket on constant sections $\mf{d}\subseteq
C^\infty(M,\mf{d})=\sect(\mbb{E})$ extends to a Courant bracket \emph{if and only if}
the stabilizers $\mf{d}_m\subseteq \mf{d}$ are coisotropic, i.e.~ $\mf{d}_m\supseteq \mf{d}_m^\perp$. Explicitly, for $\xi_1,\xi_2\in \sect(\mbb{E})=C^\infty(M,\mf{d})$ the
Courant bracket reads (see \cite[$\mathsection$ 4]{LiBland:2009ul})
\begin{equation}
\label{eq:actioncourant} \Cour{\xi_1,\xi_2}=[\xi_1,\xi_2]+\Lied_{\rho(\xi_1)}\xi_2-\Lied_{\rho(\xi_2)}\xi_1+\rho^*\la\d\xi_1,\xi_2\ra.\end{equation}
Here $\rho^*\colon T^*M\to \mf{d}\times M$ is the dual map to the action map $\rho\colon \mf{d}\times M\to TM$, 
using the metric to identify $\mf{d}^*\cong \mf{d}$. 
We refer to $\mf{d}\times M$ with bracket \eqref{eq:actioncourant} as an \emph{action Courant algebroid}. 
\end{example}

\begin{example}[Cartan Courant algebroid \cite{Severa:2001}]\label{ex:CartCourAlg}
Suppose $\g$ is a Lie group endowed with an invariant metric, $\la\cdot,\cdot\ra$. We let $\ol{\g}$ denote the Lie algebra $\g$ with the metric negated, $-\la\cdot,\cdot\ra$. Suppose $G$ is a Lie group with Lie algebra $\g$ and which preserves the metric. The Lie algebra $\ol{\g}\oplus \g$ acts on $G$ by $\rho:(\ol{\g}\oplus \g)\times G\to TG$,
$$\rho(\xi,\eta)=-\xi^R+\eta^L,\quad \xi,\eta\in\g,$$
where $\xi^L,\xi^R\in\mf{X}(G)$ denotes the left/right-invariant vector field on $G$ which is equal to $\xi\in\g$ at the identity element.

The stabilizer at the identity element is the diagonal subalgebra, $\g_\Delta\subseteq\ol{\g}\oplus\g$ which is Lagrangian. Now $\rho$ is equivariant with respect to the $G$-action on $(\ol{\g}\oplus \g)\times G$ given by
$$g':(\xi,\eta;g)\to (\Ad_{g'}\xi,\eta,g'\cdot g),\quad g'\in G, (\xi,\eta;g)\in (\ol{\g}\oplus \g)\times G,$$ and the left action of $G$ on $TG$. Since this action is transitive on the base of the vector bundles and $G$ preserves the metric on $\g$, it follows that all stabilizers are Lagrangian. Thus $(\ol{\g}\oplus \g)\times G$ is an action Courant algebroid, called the \emph{Cartan Courant algebroid}.

The diagonal subalgebra $\g_\Delta\subseteq(\ol{\g}\oplus \g)$ defines a Dirac structure $$\g_\Delta\times G\subseteq(\ol{\g}\oplus \g)\times G$$ called the \emph{Cartan Dirac structure}.

\begin{remark}
The Cartan Courant algebroid was first introduced in \cite{Severa:2001}, and later simplified to the above description in \cite{Alekseev:2009tg}. The Cartan Dirac structure was discovered independently by Alekseev, \v{S}evera and Strobl \cite{Severa:2001,Klimcik:2002eg,Alekseev00}. The description given above was found in \cite{Alekseev:2009tg}
\end{remark}
\end{example}


The Dirac structure of central focus in this paper is the following generalization of the Cartan Dirac structure.
\begin{example}[$\Gamma$-twisted Cartan-Dirac structure]\label{ex:twstCartDir}
Suppose $\Gamma$ is a permutation graph,  with edge set $E_\Gamma$, vertex set $V_\Gamma$, and (bijective) incidence maps $$\on{in},\on{out}:E_\Gamma\to V_\Gamma.$$ 

The diagonal subalgebra $\g_\Delta^{V_\Gamma}\subseteq (\ol{\g}\oplus\g)^{V_\Gamma}$ is Lagrangian, and hence so is its image 
$$\g_\Gamma:=(\on{in}\oplus\on{out})^!\big(\g_\Delta^{V_\Gamma}\big)\subseteq (\ol{\g}\oplus\g)^{E_\Gamma}$$
under the isomorphism $(\on{in}\oplus\on{out})^!:(\ol{\g}\oplus\g)^{V_\Gamma}\to (\ol{\g}\oplus\g)^{E_\Gamma}$.

Thus $$\g_\Gamma\times G^{E_\Gamma}\subseteq \big((\ol{\g}\oplus\g)\times G\big)^{E_\Gamma}$$ 
is a Dirac structure, called the $\Gamma$-twisted Cartan Dirac structure.

The following picture can be helpful. We associate a copy of the Courant algebroid $(\ol{\g}\oplus\g)\times G$ to each edge, as in \cref{fig:3Cycla}. The Dirac structure $\g_\Gamma$ acts diagonally at each vertex, as pictured in \cref{fig:3Cyclb}.


\begin{figure}
\begin{center}
\begin{subfigure}[t]{.45\linewidth}
\centering
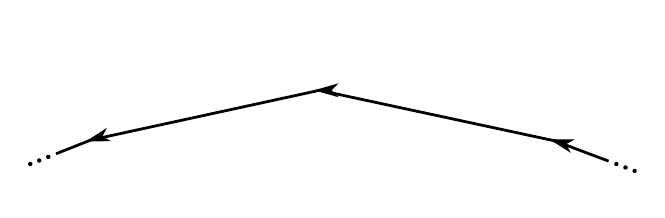
\caption{The $\Gamma$-twisted Cartan Courant algebroid. Here $\on{in}(e)=v=\on{out}(v')$, where $v\in V_\Gamma$ is the vertex and $e,e'\in E_\Gamma$ are edges.}\label{fig:3Cycla}
\end{subfigure}
\hspace{.05\linewidth}
\begin{subfigure}[t]{.45\linewidth}
\centering
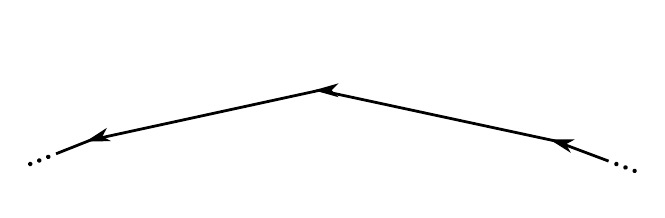
\caption{The element $\xi^{(\cdot)}\in \g^{V_\Gamma}_\Delta \cong  \g_\Gamma$ of the $\Gamma$-twisted Cartan Dirac structure acts diagonally by $\xi^v$ at the vertex $v$. }\label{fig:3Cyclb}
\end{subfigure}
\caption{}
\end{center}

\end{figure}
\begin{remark}
$\Gamma$-twisted Cartan-Dirac structures were discovered independently by Alejandro Cabrera, who provides their construction in terms of Dirac reduction of the Lie-Poisson structure on the dual of the loop Lie algebra, $\Omega^1(S^1,\g)$.
\end{remark}

\begin{remark}\label{rem:DiscreteCartDir}
The following was explained to the authors by Eckhard Meinrenken.
Suppose $\sigma_\Gamma:V_\Gamma\to V_\Gamma$ is the permutation induced by $\Gamma$ (by the discrete flow along the edges of $\Gamma$), and consider the group $G_{\on{big}}:=G^{V_\Gamma}\rtimes \mbb{Z}$,
$$(g,i)\cdot(g',i')=(g(\sigma_\Gamma^i)^!(g'),i+i'),$$
where $\big((\sigma_\Gamma^i)^!(g')\big)_v=(g')_{\sigma_\Gamma^i(v)}$ for any $v\in V_\Gamma$.
Consider the embedding of manifolds $$\big((\on{in}^{-1})^!, -1\big):G^{E_\Gamma}\to G^{V_\Gamma}\rtimes \mbb{Z}=G_{\on{big}}.$$
The Lie algebra of $G_{\on{big}}$ is $\g^{V_\Gamma}$, and for every $\xi\in \g^{V_\Gamma}$, the left and right invariant vector fields $\xi^L,\xi^R\in\mf{X}(G_{\on{big}})$ restrict to the $e\in E_\Gamma$'th factor of $\big((\on{in}^{-1})^!, -1\big)(G^{E_\Gamma})$ as $\xi^R_{\on{in}(e)}$ and $\xi^L_{\on{out}(e)}$, (respectively).
Thus, the $\Gamma$-twisted Cartan-Dirac structure on $G^{E_\Gamma}$ may be canonically identified with the restriction of the Cartan-Dirac structure on $G_{\on{big}}$ to $\big((\on{in}^{-1})^!, -1\big)(G^{E_\Gamma})$.
\end{remark}

\end{example}

\subsection{Courant relations and morphisms of Manin pairs}

\subsubsection{Relations}
A smooth relation $S\colon M_1\dasharrow M_2$ between manifolds is an immersed submanifold 
$S\subseteq M_2\times M_1$. We will write $m_1\sim_S m_2$ if $(m_2,m_1)\in S$. 
Given smooth relations $S\colon M_1\dasharrow M_2$ and 
$S'\colon M_2\dasharrow M_3$, the set-theoretic composition $S'\circ S$ is the image of 
\begin{equation}\label{eq:intersection}
 S'\diamond S=(S'\times S)\cap (M_3\times (M_2)_\Delta \times M_1)
 \end{equation}
under projection to $M_3\times M_1$, where $(M_2)_\Delta\subseteq M_2\times M_2$ denotes the diagonal.

We say that the two relations \emph{compose cleanly} if 
\eqref{eq:intersection} is a clean intersection in the sense of Bott (i.e. it is smooth, and the intersection of the tangent bundles is the tangent bundle of the intersection), and the map from
$S'\diamond S$ to $M_2\times M_1$ has constant rank. In this case, the composition 
$S'\circ S\colon M_1\dasharrow M_3$ is a well-defined smooth relation. See \cite[Appendix A]{LiBland:2011vqa} for more information on the composition of smooth relations. For background on clean intersections of manifolds, see 
e.g.~ \cite[page 490]{Hormander:2007tm}. 

For any relation $S:M_1\dasharrow M_2$, we let $S^\top:M_2\dasharrow M_1$ denote the \emph{transpose} relation, 
$$S^\top=\{(m_1,m_2)\in M_1\times M_2\mid (m_2,m_1)\in S\}.$$

\subsubsection{Courant relations}
As popularized by the second author \cite{Severa:2005vla,LetToWein}, Dirac structures can be interpreted as the `canonical relations' between Courant algebroids:
\begin{definition}[Courant relations and morphisms \cite{Alekseev:2002tn,Severa:2005vla,Bursztyn:2009wi}]
Suppose $\mbb{E}_1\to M_1$ and $\mbb{E}_2\to M_2$ are two Courant algebroids. A relation
$$R:\mbb{E}_1\dasharrow \mbb{E}_2$$
is called a \emph{Courant relation} if $R\subseteq \mbb{E}_2\times\ol{\mbb{E}_1}$ is a Dirac structure supported on a submanifold $S\subseteq M_2\times M_1$. A Courant relation is called \emph{exact} if the underlying Dirac structure is exact.

When $S=\gr(\mu)$ is the graph of a smooth map $\mu:M_1\to M_2$, $R$ is called a \emph{Courant morphism}.

We define the range $\on{ran}(R)\subseteq \mbb{E}_2\rvert_S$ and the kernel $\on{ker}(R)\subseteq\mbb{E}_2\rvert_S$ of $R$ by
\begin{align*}
\on{ran}(R)&:=\{e\in\mbb{E}_2\rvert_S\mid e'\sim_R e \text{ for some } e'\in \mbb{E}_1\}\\
\on{ker}(R)&:=\{e\in\mbb{E}_1\rvert_S\mid e\sim_R 0\}.\\
\end{align*}
\end{definition}
As an example, any Dirac structure $E\subseteq \mbb{E}$ defines a Courant morphism $$E:\mbb{E}\dasharrow \ast$$ to the trivial Courant algebroid (or a Courant relation from the trivial Courant algebroid). Similarly, the diagonal $\mbb{E}_\Delta\subseteq\mbb{E}\times\ol{\mbb{E}}$ defines the Courant morphism 
$$\mbb{E}_\Delta:\mbb{E}\dasharrow\mbb{E}$$
corresponding to the identity map.

The key property of Courant relations is the ability to compose them:
\begin{proposition}[{\!\!\cite[Proposition 1.4]{LiBland:2011vqa}}]\label{prop:CompCourRel}
Suppose $R:\mbb{E}_1\dasharrow\mbb{E}_2$ and $R':\mbb{E}_2\dasharrow\mbb{E}_3$ are two Courant relations which compose cleanly, then their composition,
$$R'\circ R:\mbb{E}_1\dasharrow \mbb{E}_3,$$
is a Courant relation.
\end{proposition}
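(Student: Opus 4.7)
The plan is to verify the three defining properties of a Courant relation for $R'\circ R$: it is a smooth submanifold of $\mbb{E}_3\times\ol{\mbb{E}_1}$, it is Lagrangian in the fibres, and its space of sections is closed under the Courant bracket (along its support $S'\circ S$).

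Smoothness is essentially built into the definition of clean composition: since $R'\diamond R=(R'\times R)\cap(\mbb{E}_3\times(\mbb{E}_2)_\Delta\times\mbb{E}_1)$ is a clean intersection, it is a smooth submanifold of $R'\times R$, and the constant-rank hypothesis guarantees that its image $R'\circ R$ is a smooth immersed submanifold. The support of $R'\circ R$ is the clean composition $S'\circ S$ of the underlying supports.

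For the Lagrangian property I would argue pointwise. View $R'\times R$ as a Lagrangian subbundle of the product Courant algebroid
\[
\mbb{E}_3\times\ol{\mbb{E}_2}\times\mbb{E}_2\times\ol{\mbb{E}_1},
\]
and view $\mbb{E}_3\times(\mbb{E}_2)_\Delta\times\mbb{E}_1$ as another Lagrangian subbundle (using that the diagonal $(\mbb{E}_2)_\Delta$ is Lagrangian in $\ol{\mbb{E}_2}\times\mbb{E}_2$). A standard fact about quadratic vector spaces is that a clean intersection of two Lagrangian subspaces, followed by projection along a coisotropic quotient (here the projection forgetting the middle $\mbb{E}_2$-factors), yields a Lagrangian subspace. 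Applied fibrewise over $S'\circ S$, this produces the Lagrangian subbundle structure on $R'\circ R$.

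The main obstacle is involutivity, since the Courant bracket is not tensorial and does not automatically descend under projections. The strategy is to lift: given local sections $\sigma_1,\sigma_2$ of $R'\circ R$ near a point of $S'\circ S$, I would construct lifts $\tilde\sigma_1,\tilde\sigma_2$ which are simultaneously sections of $R'\times R$ and tangent to the constraint $\mbb{E}_3\times(\mbb{E}_2)_\Delta\times\mbb{E}_1$ along $R'\diamond R$. The clean intersection hypothesis, which says that tangent spaces intersect in the tangent space of the intersection, is precisely what allows such compatible extensions to exist. Involutivity of $R'\times R$ (inherited from the individual involutivity of $R$ and $R'$) together with involutivity of the diagonal relation ensures that $\Cour{\tilde\sigma_1,\tilde\sigma_2}$ remains in $R'\times R$ and is still compatible with the diagonal constraint along $R'\diamond R$. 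Pushing forward to $\mbb{E}_3\times\ol{\mbb{E}_1}$ recovers $\Cour{\sigma_1,\sigma_2}$, which therefore lies in $R'\circ R$. The delicate point, and what I expect to require the most care, is checking that the lifts can be chosen so that the projection of their bracket is well-defined and equals the bracket of their projections; this boils down to the Leibniz-type identities (c4)--(c6) and the fact that any ambiguity in the lifts lies in the kernel of the projection, which is controlled by the support of the constraint Dirac structure.
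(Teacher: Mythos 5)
The paper offers no proof of this proposition---it is imported verbatim from \cite[Proposition 1.4]{LiBland:2011vqa}---so there is no internal argument to compare against; your outline follows the standard proof of that result, and the overall strategy is sound. Two points need sharpening. First, $\mbb{E}_3\times(\mbb{E}_2)_\Delta\times\mbb{E}_1$ is not Lagrangian in $\mbb{E}_3\times\ol{\mbb{E}_2}\times\mbb{E}_2\times\ol{\mbb{E}_1}$; it is coisotropic, with orthogonal $0\times(\mbb{E}_2)_\Delta\times 0$. The linear-algebra input you want is that the image of a Lagrangian subspace under reduction by a coisotropic, $L\mapsto(L\cap C+C^\perp)/C^\perp$, is again Lagrangian---equivalently, that a fibrewise composition of linear Lagrangian relations is Lagrangian; cleanness is only needed afterwards to make the resulting family of subspaces into a smooth subbundle of constant rank. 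Second, for involutivity the point you flag as delicate is indeed the crux, but it is discharged in a more specific way than ``compatible extensions exist'': one first uses the Leibniz identities c4)--c5), together with the containment $\mbf{a}^*\ann\big(T(S'\circ S)\big)\subseteq R'\circ R$ (which absorbs the $\la e_1,e_2\ra\,\mbf{a}^*\d f$ term), to reduce the bracket condition to a family of ambient sections whose restrictions merely \emph{span} $R'\circ R$ pointwise; for such a spanning family, cleanness (constant rank of $R'\diamond R\to R'\circ R$) lets one choose product-form lifts $e_3\in\sect(\mbb{E}_3)$, $e_2\in\sect(\mbb{E}_2)$, $e_1\in\sect(\mbb{E}_1)$ with $(e_3,e_2)|_{S'}\in\sect(R')$ and $(e_2,e_1)|_{S}\in\sect(R)$, whereupon componentwise involutivity of $R'$ and $R$ gives the claim. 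The reduction to a spanning family is essential: an arbitrary section of $\mbb{E}_3\times\ol{\mbb{E}_1}$ restricting into $R'\circ R$ need not admit a middle component that is the pullback of a single section of $\mbb{E}_2$, and this is the step your sketch leaves implicit.
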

%

\begin{example}[Standard lift]
Suppose $S:M_1\to M_2$ is a relation, then the \emph{standard lift} of $S$,
$$R_S:=TS\oplus\ann(TS)\subseteq \mbb{T}M_2\times\ol{\mbb{T}M_1},$$ 
defines a Courant relation
$$R_S:\mbb{T}M_1\dasharrow\mbb{T}M_2.$$
\end{example}

\begin{example}[Coisotropic subalgebras]\label{ex:CoisoAlgRed}
Suppose $\mf{d}$ is a Lie algebra equipped with an invariant metric. A subalgebra $\mf{c}\subseteq\mf{d}$ is said to be \emph{coisotropic} if $\mf{c}^\perp\subseteq\mf{c}$. In this case, $\mf{c}^\perp\subseteq\mf{c}$ is an ideal, and the metric on $\mf{d}$ descends to define a metric on 
$$\mf{d}_\mf{c}:=\mf{c}/\mf{c}^\perp.$$
The natural relation 
$$R_\mf{c}:\mf{d}\dasharrow\mf{d}_\mf{c},\quad \xi\sim_{R_\mf{c}} \xi+\mf{c}^\perp\quad\text{ for } \xi\in \mf{c},$$
is a Courant relation, where $(\xi+\mf{c}^\perp)\in\mf{c}/\mf{c}^\perp$ denotes the equivalence class of $\xi\in\mf{c}$.

For any Lagrangian subalgebra $\h\subseteq \mf{d}$, \cref{prop:CompCourRel} implies that $$\h_\mf{c}:=R_\mf{c}\circ\h=(\h\cap\mf c)/(\h\cap\mf c^\perp)$$ is a Lagrangian subalgebra of $\mf{d}_\mf{c}$.
\end{example}

\subsubsection{Morphisms of Manin pairs}
A pair $(\mbb{E},E)$ consisting of a Courant algebroid, $\mbb{E}$, together with a Dirac structure $E\subseteq\mbb{E}$ is known as a \emph{Manin pair} \cite{Alekseev99,Bursztyn:2009wi}.

\begin{definition}[\!\!\cite{Bursztyn:2009wi}]\label{def:MorpManPair}
Suppose $\mbb{E}_1\to M_1$ and $\mbb{E}_2\to M_2$ are two Courant algebroids.
A Courant morphism $$R:\mbb{E}_1\dasharrow \mbb{E}_2,$$ supported on the graph of a map $\mu:M_1\to M_2$, defines a \emph{morphism of Manin pairs},
\begin{equation}\label{eq:MorpMP}R:(\mbb{E}_1,E_1)\dasharrow (\mbb{E}_2,E_2)\end{equation}
if
\begin{enumerate}
\item[m1)] $R\circ E_1\subseteq E_2$, and
\item[m2)] $\on{ker}(R)\cap E_1=0$
\end{enumerate}
Here $\on{ker}(R):=(0\times\ol{\mbb{E}_1})\cap R$.

The morphism of Manin pairs, \labelcref{eq:MorpMP}, is said to be \emph{exact} if the underlying Dirac structure is exact.
 
 Suppose \begin{equation}\label{eq:MorpMP2}R':(\mbb{E}_2,E_2)\dasharrow (\mbb{E}_3,E_3)\end{equation}
 is a second morphism of Manin pairs. Conditions (m1) and (m2) imply that the composition of relations $R'\circ R$ is clean. Moreover, the composition defines a morphism of Manin pairs 
 \begin{equation}\label{eq:MorpMP3}R'\circ R:(\mbb{E}_1,E_1)\dasharrow (\mbb{E}_3,E_3),\end{equation}
 (cf. \cite{Bursztyn:2009wi}).
\end{definition}

\begin{remark}
In \cite{LiBland:2010wi}, a morphism of Manin pairs, \labelcref{eq:MorpMP} was said to be \emph{full} if the map $$\mbf{a}\rvert_R:R\to T\gr(\mu)$$
was a surjection. The concept of exact morphisms of Manin pairs is a stronger, but more natural, condition. 
\end{remark}

If \labelcref{eq:MorpMP} is a morphism of Manin pairs, then there exists map $\rho_R:\mu^*E_2\to E_1$ uniquely determined by the condition
\begin{equation}\label{eq:rhoR}\rho_R(e)\sim_R e,\quad e\in E_2.\end{equation}
The induced map of section $\rho_R:\sect(E_2)\to \sect(E_1)$ is a morphism of Lie algebras. Thus \labelcref{eq:MorpMP} defines an action of $E_2$ on $M_1$ which factors through the action of $E_1$.

\subsection{Quasi-Hamiltonian manifolds}\label{sec:Q-HamMflds}
If $(\mathbb E,E)$ is a Manin pair, a \emph{quasi-Hamiltonian $(\mathbb E,E)$-manifold} in the sense of \cite{Bursztyn:2009wi} is a manifold $M$ together with a morphism of Manin pairs
$$(\mathbb TM,TM)\dasharrow(\mathbb E,E).$$
We shall say that the quasi-Hamiltonian space is \emph{exact} (or \emph{quasi-symplectic}) if the morphism of Manin pairs is exact.
To simplify notation (when $M$ is a complicated expression), we will often denote the Manin pair on the left as $(\mathbb TM,TM)=(\mathbb T,T)M$.

\begin{example}[Poisson and symplectic structures]\label{ex:PoissSympl}
Let $0$ denote the trivial Courant algebroid over a point. Consider a morphism of Manin pairs
\begin{equation}\label{eq:MMPPoisson}R:(\mbb T,T)M\dasharrow (0,0).\end{equation}
In this case, $R\subseteq \mbb{T}M$ is just a Dirac structure with support on all of $M$. Condition (m1) is vacuous, while condition (m2) is equivalent to $R\cap TM=0$. As explained in \cite{Courant:tm,Courant:1990uy}, it follows that 
$$R=\gr(\pi^\sharp):=\{(\pi(\alpha,\cdot)+\alpha)\mid\alpha\in\Omega^1(M)\}\subseteq \mbb{T}M$$
is the graph of a Poisson bivector field $\pi\in\mf{X}^2(M)$. In this way, there is a one-to-one correspondence between morphisms of Manin pairs of the form \labelcref{eq:MMPPoisson} and Poisson structures on $M$ \cite{Bursztyn:2009wi}.

\Cref{eq:MMPPoisson} is an exact morphism of Manin pairs if $\mbf{a}\rvert_R:\gr(\pi^\sharp)\to TM$ is a surjection. Equivalently, 
$$\pi^\sharp:T^*M\to TM$$
 is an isomorphism, or the Poisson structure on $M$ is symplectic. In this way, there is a one-to-one correspondence between exact morphisms of Manin pairs of the form \labelcref{eq:MMPPoisson} and symplectic structures on $M$ \cite{LiBland:2010wi}.
 
 The Poisson and symplectic structures that appear in this paper will all arise in this way.
\end{example}

\begin{example}[$E$-invariant submanifolds]\label{ex:EInvSubMMP}
Let $(\mbb{E},E)$ be a Manin pair over a manifold $N$, and suppose $M\subseteq N$ is an $E$-invariant submanifold, i.e. $\mbf{a}(E\rvert_M)\subseteq TM.$ Then $$\mbf{a}^*\rvert_M:T^*N\rvert_M\to \mbb{E}$$ descends to a map $$\mbf{a}^*\rvert_M: T^*M\cong T^*N/\ann(TM)\to \mbb{E}.$$  
As in \cite[Example~1.6]{LiBland:2011vqa}, define the Courant relation 
$R_{E,M}:\mbb{T}M\dasharrow \mbb{E}$
by
$$\mbf{a}(e)+i^*\alpha\sim_{R_{E,M}} e+\mbf{a}^*\alpha,\quad e\in E,\quad \alpha\in T^*N,$$
where $i:M\to N$ denotes the inclusion.
Then
\begin{equation}\label{eq:REM}R_{E,M}:(\mbb T,T)M\dasharrow (\mbb{E},E)\end{equation}
 is a morphism of Manin pairs.

Moreover, \labelcref{eq:REM} is an \emph{exact} morphism of Manin pairs if and only if $\mbf{a}(E\rvert_M)=TM$ and the Courant algebroid $\mbb{E}\to N$ is exact along $M$.

\begin{remark}
In fact, \cref{eq:REM} is the unique morphism of Manin pairs supported on $\gr(i)$.
To see why, suppose 
$$R:(\mbb T,T)M\dasharrow (\mbb{E},E)$$
is such a morphism of Manin pairs.
 Since $R$ is supported on $\gr(i)$, we must have $$i^*\alpha\sim_{R}\mbf{a}^*\alpha,$$ for any $\alpha\in T^*N$.
On the other hand, as explained in \cite[Proposition 3.3]{Bursztyn:2009wi}, for any $e\in E\rvert_M$ there exists a unique $X\in TM$ such that 
$$X\sim_R e.$$
Since $R$ is supported on $\gr(i)$, we have $i_*X=\mbf{a}(e),$ or $X=\mbf{a}(e)$. Thus $R=R_{E,M}$.
\end{remark}
\end{example}

\section{Quasi-Hamiltonian reduction}

\subsection{Reduction theorems}\label{sec:RedTheorems}
Let $\mf d$ be a quadratic Lie algebra acting on a manifold $N$ so that all the stabilizers are coisotropic, and let $\mf h\subset\mf d$ be a Lagrangian Lie subalgebra. We shall consider the following special case of general quasi-Hamiltonian $(\mathbb E,E)$-manifolds.
\begin{definition}
A \emph{quasi-Hamiltonian $(\mf d,\mf h)\times N$-manifold} is a manifold $M$ together with a morphism of Manin pairs
$$(\mathbb T,T)M\dasharrow (\mf d,\mf h)\times N.$$
It is \emph{exact} (or \emph{quasi-symplectic}) if the morphism is exact.
\end{definition}

\begin{example}
If $\mf d=0$ and $N$ is a point then (as we saw in \cref{ex:PoissSympl}) a quasi-Hamiltonian structure is the same as a Poisson or (in the exact case) symplectic structure. More generally, a quasi-Hamiltonian $(0,0)\times N$-structure on $M$ is equivalent to a Poisson structure on $M$ and to a map $\mu:M\to N$ such that $\mu^*(C^\infty(N))\subset C^\infty(M)$ is in the Poisson centre.

 If $\g$ is a quadratic Lie algebra then an \emph{exact} quasi-Hamiltonian $(\g\oplus\bar\g,\g_\Delta)\times G$-structure on $M$ is equivalent to a quasi-Hamiltonian $G$-structure in the sense of Alekseev, Malkin and Meinrenken. If $\h$ is a Lie algebra then a quasi-Hamiltonian $(\h\ltimes\h^*)\times\h^*$-structure on $M$ is equivalent to a Poisson structure on $M$ together with a moment map $M\to\h^*$ generating an action of $\h$. In the exact case the Poisson structure is symplectic.
\end{example}

In this subsection we present a  reduction procedure, which will be the main tool used in our study of the moduli spaces of flat connections.

\begin{definition}
Let $M$ be a quasi-Hamiltonian $(\mf d,\mf h)\times N$-manifold.
 \emph{Reductive data} $(\mf{c},S)$ consists of a coisotropic Lie subalgebra $\mf{c}\subseteq\mf{d}$ together with a $\mf{c}$-invariant submanifold $S\subseteq N$ such that
\begin{enumerate}
\item[r1)] the $\mf{c}^\perp$-orbits in $S$ form a regular foliation\footnote{By a \emph{regular foliation}, we mean that the leaf space carries the structure of a smooth manifold for which the quotient map is a surjective submersion.} with quotient $q_N:S\to  N_{\mf{c},S}$,
\item[r2)] the graph $\gr(\mu)$, where $\mu$ is the underlying map $M\to N$, intersects $S\times M$ cleanly, 
 and the $\h\cap\mf{c}^\perp$-orbits in $\mu^{-1}(S)$ form a regular foliation with quotient $q_M:\mu^{-1}(S)\to  N_{\mf{c},S}$.
\end{enumerate}

\end{definition}

\begin{theorem}[Quasi-Hamiltonian reduction]\label{thm:PartRed}
 Suppose $(\mf{c},S)$ is reductive data for a morphism of Manin pairs 
 \begin{equation}\label{eq:PartRedMMPin}R:(\mbb T,T)M\dasharrow(\mf{d},\h)\times N.\end{equation}
 Then
\begin{equation}\label{eq:PartRedMMPout}R_{\mf{c},S}:(\mbb{T} , T )M_{\mf{c},S}\dasharrow (\mf{d}_\mf{c},\h_\mf{c})\times N_{\mf{c},S}\end{equation}
is a morphism of Manin pairs, where 
$$M_{\mf{c},S}=\mu^{-1}(S)/{\h\cap\mf c^\perp}$$
$$R_{\mf{c},S}:=R_2\circ R\circ R_1^\top,$$
\begin{align*}
R_2&:=R_\mf{c}\times(\gr(q_N)\circ \gr(i_N)^\top),\\
R_1&:=R_{q_M}\circ R_{i_M}^\top,
\end{align*}
 $i_N:S\to N$ and $i_M:\mu^{-1}(S)\to M$ are the canonical inclusions, and $R_\mf{c}:\mf{d}\dasharrow\mf{d}_\mf{c}$ and $\h_\mf c $ are as in \cref{ex:CoisoAlgRed}.
\end{theorem}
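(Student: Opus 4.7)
The plan is to exhibit $R_{\mf{c},S}$ as a clean composition of morphisms of Manin pairs and invoke the fact recalled in \cref{def:MorpManPair} that such compositions are again morphisms of Manin pairs. The building blocks I would verify are: the standard lifts of the inclusions $i_M\colon\mu^{-1}(S)\hookrightarrow M$ and $i_N\colon S\hookrightarrow N$ (each gives a morphism of Manin pairs from the tangent Manin pair of the submanifold to that of the ambient manifold, with kernel condition immediate from the injectivity of the tangent map); the standard lifts of the surjective submersions $q_M$ and $q_N$, which exist by (r1) and (r2) and are morphisms of Manin pairs by the standard treatment of fibrations; and the coisotropic Courant reduction $R_\mf{c}\colon\mf{d}\dasharrow\mf{d}_\mf{c}$ of \cref{ex:CoisoAlgRed}, which extends to a morphism of Manin pairs $(\mf{d},\h)\dasharrow(\mf{d}_\mf{c},\h_\mf{c})$ because the image of $\h$ under $R_\mf{c}$ is precisely $\h_\mf{c}$.

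Having established the building blocks, the main task is to apply \cref{prop:CompCourRel} iteratively. I would first form $R_1 = R_{q_M}\circ R_{i_M}^\top$ and the $N$-side analogue $R_{q_N}\circ R_{i_N}^\top$, whose cleanness follows from (r1) and (r2); the product $R_2 = R_\mf{c}\times(R_{q_N}\circ R_{i_N}^\top)$ is then a morphism of Manin pairs automatically. The only substantive new transversality required for the triple composition $R_2\circ R\circ R_1^\top$ is the cleanness of $\gr(\mu)\cap(S\times M)$, which is precisely the clean intersection clause in (r2). Once cleanness is in hand, the resulting Courant relation is supported on the graph of the smooth map $\bar\mu\colon M_{\mf{c},S}\to N_{\mf{c},S}$ induced from $\mu$; this map is well-defined because the inclusion $\h\cap\mf{c}^\perp\subseteq\mf{c}^\perp$ ensures that the residual gauge action on $\mu^{-1}(S)$ is intertwined by $\mu$ with the $\mf{c}^\perp$-action on $S$. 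Conditions (m1) and (m2) then pass to $R_{\mf{c},S}$ since they are preserved under clean composition of morphisms of Manin pairs.

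The main obstacle is the cleanness verification in the second paragraph. The hypotheses (r1) and (r2) are calibrated precisely to rule out all transversality failures that could occur in the iterated composition: (r1) controls the $N$-side quotient, while (r2) simultaneously controls the pullback of $\mu$ to $S$ and the regularity of the residual gauge orbits on $\mu^{-1}(S)$. A secondary subtlety is that the stabilizers for the $\mf{d}_\mf{c}$-action on $N_{\mf{c},S}$ must again be coisotropic in $\mf{d}_\mf{c}$; this is automatic from the coisotropy of stabilizers in $\mf{d}$ and the definition of $\mf{d}_\mf{c}=\mf{c}/\mf{c}^\perp$. Once these points are settled, the algebraic content of the theorem follows formally from the composition properties recalled in \cref{def:MorpManPair}.
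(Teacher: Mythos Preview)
Your decomposition strategy has a genuine gap. In \cref{def:MorpManPair} a morphism of Manin pairs is by definition a Courant \emph{morphism}, i.e.\ a Courant relation supported on the graph of a smooth map. Neither $R_1^\top=R_{i_M}\circ R_{q_M}^\top$ nor $R_2=R_\mf{c}\times(\gr(q_N)\circ\gr(i_N)^\top)$ is supported on the graph of a map: the first involves the transpose of the submersion $q_M$, the second the transpose of the inclusion $i_N$. So they are merely Courant relations, and the automatic cleanness and preservation of (m1)--(m2) recorded in \cref{def:MorpManPair} simply do not apply to them. Your last sentence, that (m1) and (m2) ``pass to $R_{\mf{c},S}$ since they are preserved under clean composition of morphisms of Manin pairs,'' is therefore unsupported: you have no chain of morphisms of Manin pairs to compose.

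The paper does not attempt any such factorisation. Instead it verifies everything directly for the single composite $R_{\mf{c},S}$. Cleanness (\cref{lem:cleanComp}) is established by showing that $\ker(R_2\times R_1)\cap R$ has constant rank, via an exact sequence
\[
0\to\h\cap\mf{c}^\perp\to\ker(R_2\times R_1)\cap R\to\ann(T\mu^{-1}(S))\to0,
\]
and by checking separately that the base relations compose cleanly. The Manin pair axioms (\cref{lem:EstMMP}) are then checked by hand: given $(\eta;Z)\in R_{\mf{c},S}$, one lifts to $(\xi;X+\alpha)\in R$, strips off the conormal part $\zeta_\alpha=\mbf{a}^*\tilde\alpha\in\mf{c}^\perp$ coming from $\alpha\in\ann(T\mu^{-1}(S))$, and then applies (m1) for $R$ to the pair $(\xi-\zeta_\alpha;X)$ to conclude $\xi-\zeta_\alpha\in\h$, hence $\eta\in\h_\mf{c}$. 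The same manoeuvre gives (m2): if $\eta=0$ then $\xi-\zeta_\alpha\in\h\cap\mf{c}^\perp$, so $X=\rho_R(\xi-\zeta_\alpha)$ is tangent to an $\h\cap\mf{c}^\perp$-orbit and $Z=(q_M)_*X=0$. This conormal-stripping argument is the real content you are missing; it is not a formal consequence of composing smaller pieces.
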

The first two statements in \cref{thm:reductionIntro} follow as consequences of \cref{thm:PartRed}.

\begin{theorem}[Quasi-Hamiltonian reduction in the exact case]\label{thm:ExactPartRed}
If, in the setup of \cref{thm:PartRed}, the following additional assumptions hold:
\begin{itemize}
\item $\mf{c}$ acts transitively on $S$,
\item \labelcref{eq:PartRedMMPin} is an exact morphism of Manin pairs,
\end{itemize}
 then 
 \begin{itemize}
\item \labelcref{eq:PartRedMMPout} is an exact morphism of Manin pairs.
\end{itemize}
\end{theorem}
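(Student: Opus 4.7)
By \cref{lem:ExDirStr}, exactness of $R_{\mf c,S}$ amounts to the conjunction of two conditions: that the ambient Courant algebroid $\mbb{T}M_{\mf c,S}\times\ol{\mf d_\mf c\times N_{\mf c,S}}$ is exact along $\gr(\mu_{\mf c,S})$, and that the anchor $\mbf a\colon R_{\mf c,S}\to T\gr(\mu_{\mf c,S})$ is surjective. Since $\mbb{T}M_{\mf c,S}$ is automatically exact, the first condition reduces to checking that the $\mf d_\mf c$-action on $N_{\mf c,S}$ is infinitesimally transitive with Lagrangian stabilizers.

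Infinitesimal transitivity is immediate from the hypothesis that $\mf c$ acts transitively on $S$, since $\mf d_\mf c=\mf c/\mf c^\perp$ and $N_{\mf c,S}=S/\mf c^\perp$. For the stabilizer condition, one checks that the preimage in $\mf c$ of the $\mf d_\mf c$-stabilizer of $[n]\in N_{\mf c,S}$ equals $\mf c_n+\mf c^\perp$, where $\mf c_n:=\mf c\cap\mf d_n$. Exactness of \cref{eq:PartRedMMPin}, via \cref{lem:ExDirStr}, forces the ambient stabilizer $\mf d_n$ to be Lagrangian in $\mf d$; hence $\mf c_n^\perp=(\mf c\cap\mf d_n)^\perp=\mf c^\perp+\mf d_n$, and the modular law (valid since $\mf c^\perp\subseteq\mf c$) yields
$$\bigl((\mf c_n+\mf c^\perp)/\mf c^\perp\bigr)^\perp=\bigl(\mf c\cap(\mf c^\perp+\mf d_n)\bigr)/\mf c^\perp=(\mf c^\perp+\mf c_n)/\mf c^\perp,$$
confirming the Lagrangian property in $\mf d_\mf c$.

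For anchor surjectivity, I would lift an arbitrary tangent vector $(\bar v,\bar X)\in T\gr(\mu_{\mf c,S})$ through the composition $R_{\mf c,S}=R_2\circ R\circ R_1^\top$. Using that $q_M$ is a surjective submersion, pick $v\in T_m\mu^{-1}(S)\subseteq T_mM$ with $(q_M)_*v=\bar v$; automatically $\mu_*v\in T_nS$ and $(q_N)_*\mu_*v=\bar X$. Exactness of $R$ supplies an element anchored at $(v,\mu_*v)$, and absorbing a suitable $\ann(T\gr\mu)$-representative (using the co-anchor identification $\mf d_n\cong T_n^*N$ provided by exactness) one obtains a representative of the form $(v,(\xi_0,n))$ with $v$ purely tangent; condition (m1) of \cref{def:MorpManPair} then forces $\xi_0\in\h$. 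Since $\mu_*v\in T_nS=\rho(\mf c)_n$, we have $\xi_0\in\mf c+\mf d_n$, so there exist $\eta\in\mf c$ and $\beta\in T_n^*N$ with $\eta=\xi_0+\rho^*\beta$; adding the corresponding element of $\ann(T\gr\mu)$ replaces $\xi_0$ by $\eta\in\mf c$ while changing $v$ to $v-\mu^*\beta$. The key observation is that for every $\zeta\in\h\cap\mf c^\perp$,
$$\beta(\rho(\zeta)_n)=\la\rho^*\beta,\zeta\ra=\la\eta-\xi_0,\zeta\ra=0,$$
since $\la\eta,\zeta\ra=0$ (from $\eta\in\mf c$, $\zeta\in\mf c^\perp$) and $\la\xi_0,\zeta\ra=0$ (from $\xi_0,\zeta\in\h$ and $\h$ Lagrangian). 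Hence $i_M^*(-\mu^*\beta)$ is $\h\cap\mf c^\perp$-basic and descends through $q_M$, so the modified element projects through $R_2$ and lifts through $R_1^\top$ to yield the required element of $R_{\mf c,S}$ over $(\bar v,\bar X)$.

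The principal technical hurdle will be the preliminary step of obtaining the pure-tangent representative $(v,(\xi_0,n))\in R$ on which condition (m1) is invoked: this requires verifying that the initial covector part can indeed be absorbed by an $\ann(T\gr\mu)$-adjustment, a linear-algebra compatibility that ultimately follows from the Lagrangian property of $R$ combined with the fact that $v\in T\mu^{-1}(S)$ constrains the initial covector to lie in $\on{image}(\mu^*)$ modulo the anchor-surjectivity freedom. Once anchor surjectivity is established by the argument above, \cref{lem:ExDirStr} applied to $R_{\mf c,S}$ delivers its exactness and completes the proof.
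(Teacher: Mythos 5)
Your reduction of the problem to (i) exactness of $\mf d_{\mf c}\times N_{\mf c,S}$ along the relevant locus and (ii) surjectivity of the anchor of $R_{\mf c,S}$ onto $T\gr(\tilde\mu)$ is correct, and your treatment of (i) — infinitesimal transitivity of $\mf c/\mf c^\perp$ on $S/\mf c^\perp$ together with the Lagrangian-stabilizer computation $\bigl((\mf c_n+\mf c^\perp)/\mf c^\perp\bigr)^\perp=(\mf c_n+\mf c^\perp)/\mf c^\perp$ via the modular law — is a sound repackaging of what the paper does in terms of $\ker(\mbf a_{\mf d})$ and $\on{Im}(\mbf a_{\mf d}^*)$.

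The gap is in (ii), precisely at the step you flag as the ``principal technical hurdle'': you cannot, in general, normalize an element of $R$ lying over a tangent vector $v\in T_mM$ so that its $T^*M$-component vanishes. The only elements of $R$ with zero anchor are those in $\mbf a^*\bigl(\ann(T\gr\mu)\bigr)$, whose $T^*M$-components lie in $\on{Im}(\mu^*)$; hence the covector part $\alpha$ of an element $(\xi,v+\alpha)\in R$ can only be shifted within its coset modulo $\on{Im}(\mu^*)$, and there is no reason for $\alpha$ to lie in $\on{Im}(\mu^*)$. Already for $\mf d=0$, $N$ a point, $\h=0$ and $M$ symplectic, $R=\{(0,X-\iota_X\omega)\}$ has no nonzero pure-tangent elements. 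Worse, if your normalization were available, condition (m1) would force $\xi_0\in\h$ for \emph{every} $v$, i.e.\ $\h$ would act infinitesimally transitively on $M$ — false in that same example. Everything downstream collapses: your ``key observation'' $\la\xi_0,\zeta\ra=0$ rests on $\xi_0\in\h$, and the genuine covector part $\alpha$ produced by exactness of $R$ is never controlled. The repair (and the paper's argument) is to keep $\alpha$: write $\xi=\xi'+\mbf a_{\mf d}^*\beta$ with $\xi'\in\mf c$ and $\beta\in T^*_nN$ (possible because $\mbf a_{\mf d}(\xi)=\mu_*v\in T_nS=\rho(\mf c)_n$ and $\ker\mbf a_{\mf d}=\on{Im}\mbf a_{\mf d}^*$ by exactness of $\mf d\times N$ along $\mu(M)$), pass to $(\xi',v+\alpha-\mu^*\beta)\in R$, and then pair this element against $(\zeta,\rho_R(\zeta))\in R$ for $\zeta\in\h\cap\mf c^\perp$ using that $R$ is Lagrangian: the pairing gives $\la\xi',\zeta\ra-\la\alpha-\mu^*\beta,\rho_R(\zeta)\ra=0$, and $\la\xi',\zeta\ra=0$ since $\xi'\in\mf c$ and $\zeta\in\mf c^\perp$. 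This shows the \emph{full} covector part $\alpha-\mu^*\beta$ annihilates the $\h\cap\mf c^\perp$-orbit directions and so descends through $q_M$, with no appeal to $\xi_0\in\h$.
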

The third statement in \cref{thm:reductionIntro} follows as a consequence of \cref{thm:ExactPartRed}.

We delay the proof of both these theorems to \cref{app:PrtRedPf}. 

Notice that if $\mf c\subset\mf d$ is Lagrangian then the reduced manifold is Poisson or (in the exact case) symplectic, as $\mf d_{\mf c}=0$.

\subsection{Bivector fields and quasi-Poisson structures}\label{sec:qPoissonRed}
In this section we shall explain  \cref{thm:PartRed} in more traditional terms, using bivector fields.

Suppose that 
$$R:(\mbb T,T)M\dasharrow (\mf{d},\h)\times N$$
is a morphism of Manin pairs over $\mu:M\to N$, and the subspace $\mf{k}\subseteq\mf{d}$ is a Lagrangian complement to $\h\subseteq\mf{d}$: that is, $\mf{d}=\h\oplus \mf{k}$. Then axiom (m1) of \cref{def:MorpManPair} implies that $R$ composes transversely with $ \mf{k}$, while property (m2) implies that $ \mf{k}\circ R\subseteq\mbb{T}M$ is a Lagrangian complement to $TM$. Thus there exists a unique bivector field $\pi_{ \mf{k}}\in\mf{X}^2(M)$ such that
$$ \mf{k}\circ R=\gr(\pi^\sharp):=\{\big(\pi_{ \mf{k}}(\alpha,\cdot)+\alpha\big)\mid\alpha\in T^*M\}.$$
The triple $(\mf{d},\h; \mf{k})$ is called a quasi-Manin triple, and $(M,\pi_{\mf{k}},\rho_R)$ is called a Hamiltonian quasi-Poisson $(\mf{d},\h; \mf{k})$-space with moment map $\mu:M\to N$ (cf. \cite{Bursztyn:2009wi,PonteXu:08}). The bivector field $\pi_{\mf k}$ is called a quasi-Poisson structure.



Suppose that $(\mf{c},S)$ is reductive data for the morphism of Manin pairs $R$. We want to reinterpret \cref{thm:PartRed} using the language of quasi-Poisson geometry. Thus we will be interested in Lagrangian complements to the reduced Lie algebra $\h_\mf{c}$.
\begin{lemma}\label{lem:tautoComp}
Each Lagrangian complement $\mf{k}_\mf{c}'\subseteq\mf{d}_\mf{c}$ to $\h_\mf{c}$ defines an element 
$$\tau\in \wedge^2\big(\h/(\h\cap\mf{c}^\perp)\big).$$
\end{lemma}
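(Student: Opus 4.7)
The plan is to realize $\tau$ as the element measuring the difference between two Lagrangian complements of $L_1:=\h/(\h\cap\mf{c}^\perp)$ inside an auxiliary quadratic space $V$, one complement coming from $\mf{k}_\mf{c}'$ and the other from the reference Lagrangian complement $\mf{k}\subset\mf{d}$ to $\h$ that is part of the quasi-Manin triple $(\mf{d},\h;\mf{k})$.

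First I would form $V:=(\h+\mf{c})/(\h\cap\mf{c}^\perp)$. Since $\h=\h^\perp$ and $\mf{c}^{\perp\perp}=\mf{c}$, one has $(\h+\mf{c})^\perp=\h\cap\mf{c}^\perp$, so the symmetric bilinear form on $\mf{d}$ descends to a non-degenerate pairing on $V$. The image $L_1\subseteq V$ is clearly isotropic, and a dimension count (using $\dim\mf{d}=2\dim\h$ and $\dim\mf{d}_\mf{c}=\dim\mf{c}-\dim\mf{c}^\perp$) shows $\dim L_1=\tfrac12\dim V$, so $L_1$ is Lagrangian.

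Next I would construct two Lagrangian complements of $L_1$ in $V$. For the first, let $\tilde{\mf{k}}'\subseteq\mf{c}$ denote the preimage of $\mf{k}_\mf{c}'$ under $\mf{c}\twoheadrightarrow\mf{d}_\mf{c}$. Because $\mf{c}^\perp\subseteq\tilde{\mf{k}}'\subseteq\mf{c}$ and $(\mf{k}_\mf{c}')^\perp=\mf{k}_\mf{c}'$ inside $\mf{d}_\mf{c}$, the subspace $\tilde{\mf{k}}'$ is Lagrangian in $\mf{d}$; the decomposition $\mf{d}_\mf{c}=\h_\mf{c}\oplus\mf{k}_\mf{c}'$ lifts to the identities $\tilde{\mf{k}}'\cap\h=\h\cap\mf{c}^\perp$ and $\tilde{\mf{k}}'+\h=\mf{c}+\h$, so $L_2:=\tilde{\mf{k}}'/(\h\cap\mf{c}^\perp)$ is a Lagrangian complement of $L_1$ in $V$. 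For the second, set $L_3:=\mf{k}\cap(\h+\mf{c})$ and view it in $V$ through the composite $\mf{k}\cap(\h+\mf{c})\hookrightarrow\h+\mf{c}\twoheadrightarrow V$, which is injective because $\mf{k}\cap\h=0$; isotropy is inherited from $\mf{k}$. Using $\mf{k}+\h=\mf{d}$ one gets $\dim L_3=\dim(\h+\mf{c})-\dim\h$, so $\dim L_1+\dim L_3=\dim V$, while $L_1\cap L_3=0$ in $V$ again follows from $\mf{k}\cap\h=0$; hence $L_3$ is also a Lagrangian complement of $L_1$.

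Finally, the pairing on $V$ identifies $L_3\xrightarrow{\sim}L_1^{*}$, and $L_2$ then appears as the graph of a unique linear map $\tau^\sharp:L_1^{*}\to L_1$; isotropy of $L_2$ forces $\tau^\sharp$ to be skew-symmetric, so $\tau\in\wedge^2 L_1=\wedge^2\big(\h/(\h\cap\mf{c}^\perp)\big)$ is well-defined. The argument is almost entirely bookkeeping; the only point that requires care is confirming that each of $L_1,L_2,L_3$ is Lagrangian in $V$, which reduces to the three identities $(\h+\mf{c})^\perp=\h\cap\mf{c}^\perp$, $\mf{k}+\h=\mf{d}$, and $(\tilde{\mf{k}}')^\perp=\tilde{\mf{k}}'$ (the last using coisotropy of $\mf{c}$). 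As a consistency check, when $\mf{c}$ is itself Lagrangian, $\mf{k}_\mf{c}'=0$ is forced and $\tilde{\mf{k}}'=\mf{c}$, in which case $\tau$ is precisely the element for which $\mf{c}=\{\alpha+\tau^\sharp\alpha\mid\alpha\in\mf{k},\;\la\alpha,\mf{c}\cap\h\ra=0\}+\mf{c}\cap\h$, matching the parametrization used in \cref{thm:PoissThmIntro}.
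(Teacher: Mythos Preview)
Your proof is correct and follows essentially the same approach as the paper: both pull $\mf{k}'_\mf{c}$ back to a Lagrangian $\tilde{\mf{k}}'\subset\mf{d}$ (the paper calls it $\mf{k}'=\mf{k}'_\mf{c}\circ R_\mf{c}$), compare it to the reference complement $\mf{k}$, and extract $\tau$ as the skew ``graph'' element, using the key identity $\h\cap\tilde{\mf{k}}'=\h\cap\mf{c}^\perp$. Your packaging via the auxiliary quadratic space $V=(\h+\mf{c})/(\h\cap\mf{c}^\perp)$ with its three Lagrangians $L_1,L_2,L_3$ is a bit more structured than the paper's direct computation in $\mf{d}$, but the content is identical.
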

\begin{proof}
Suppose $\mf{k}'_\mf{c}\subseteq\mf{d}_\mf{c}:=\mf{c}/\mf{c}^\perp$ is a Lagrangian complement to $\h_\mf{c}$. Let $R_\mf{c}:\mf{d}\dasharrow \mf{d}_\mf{c}$ be the relation described in \cref{ex:CoisoAlgRed}, and define $\mf{k}':=\mf{k}'_\mf{c}\circ R_\mf{c}\subseteq \mf{c}$. 
Now $\mf{k}'$ can be seen as the graph
$$\mf{k}'=\{(\xi+\tau^\sharp(\xi)+\eta)\mid\xi\in \mf{k},\; \langle\xi,\cdot\rangle\rvert_{\h\cap\mf{k}'}=0, \text{ and } \eta\in\h\cap\mf{k}'\}$$
 of a map $$\tau^\sharp:\ann(\h\cap\mf{k}')\cong\big(\h/(\h\cap\mf{k}')\big)^*\to\h/(\h\cap\mf{k}'),$$
where $\ann(\h\cap\mf{k}'):=(\h\cap\mf{k}')^\perp\cap\mf{k}$. Let $\tau\in\h/(\h\cap\mf{k}')\otimes \h/(\h\cap\mf{k}')$ be the element defined by $\tau^\sharp(\xi)=\tau(\xi,\cdot)$, then the fact that $\mf{k}'$ is Lagrangian forces $\tau$ to be skew-symmetric.
Finally, since $\mf{k}'_\mf{c}+\h_\mf{c}=\mf{c}/\mf{c}^\perp$ we have $\h+\mf{k}'=\h+\mf{c}$. Hence $\h\cap\mf{k}'=\h\cap\mf{c}^\perp$. Thus $\tau\in \wedge^2\big(\h/(\h\cap\mf{c}^\perp)\big)$.
\end{proof}

\begin{theorem}\label{thm:BivPartRed}
Suppose that $(\mf{c},S)$ is reductive data for the morphism of Manin pairs
\begin{equation}\label{eq:BivPartRedMMPin}R:(\mbb T,T)M\dasharrow (\mf{d},\h)\times N,\end{equation}
and $(M,\pi_{\mf{k}},\rho_R)$ is the Hamiltonian quasi-Poisson $(\mf{d},\h; \mf{k})$-space corresponding to the Lagrangian complement $\mf{k}\subseteq\mf{d}$  to $\h$.
Let $$R_{\mf{c},S}:(\mbb{T} , T )M_{\mf{c},S}\dasharrow (\mf{d}_\mf{c},\h_\mf{c})\times N_{\mf{c},S}$$ denote the reduced morphism of Manin pairs described in \cref{thm:PartRed}, and let $(M_{\mf{c},S},\pi_{\mf{k}'_\mf{c}},\rho_{R_{\mf{c},S}})$ be the Hamiltonian $(\mf{d}_\mf{c},\h_\mf{c};\mf{k}'_\mf{c})$-quasi Poisson manifold corresponding to a chosen Lagrangian complement $\mf{k}'_\mf{c}$ to $\h_\mf{c}$.

As in \cref{lem:tautoComp}, let $\tau \in \wedge^2\big(\h/(\h\cap\mf{c}^\perp)\big)$ be the element corresponding to $\mf{k}'_\mf{c}$.
Then  
$$(\pi_{\mf{k}}+\rho_R(\tau))\rvert_{\mu^{-1}(S)}$$ is an  $\h\cap\mf{c}^\perp$-invariant section of 
$\wedge^2\big(T(\mu^{-1}(S))/\rho_R(\h\cap\mf{c}^\perp)\big)$ which 
is mapped to $\pi_{\mf{k}'_\mf{c}}$ under the surjective submersion $\mu^{-1}(S)\to M_{\mf{c},S}$.
\end{theorem}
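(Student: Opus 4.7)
The plan is to translate the statement into the Dirac-geometric language used throughout the paper: $\pi_{\mathfrak{k}}$ is characterized by $\mathfrak{k}\circ R = \gr(\pi_{\mathfrak{k}}^\sharp)$, and $\pi_{\mathfrak{k}'_{\mathfrak{c}}}$ by $\mathfrak{k}'_{\mathfrak{c}}\circ R_{\mathfrak{c},S}=\gr(\pi_{\mathfrak{k}'_{\mathfrak{c}}}^\sharp)$. The strategy is then to unpack the formula $R_{\mathfrak{c},S}=R_2\circ R\circ R_1^\top$ from \cref{thm:PartRed} and compare the two bivectors by chasing a single composition diagram, reducing the problem to a linear computation over $\mathfrak{d}$.

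First I would handle the ``compatible'' case in which $\mathfrak{k}$ has been chosen so that $\mathfrak{k}'_{\mathfrak{c}}\circ R_{\mathfrak{c}}\subseteq \mathfrak{k}$, equivalently $\tau=0$. A straightforward linear-algebra check (using that $\mathfrak{c}$ is coisotropic in $\mathfrak{d}$) shows such a $\mathfrak{k}$ always exists: one first picks $\mathfrak{k}' := \mathfrak{k}'_{\mathfrak{c}}\circ R_{\mathfrak{c}}\subseteq\mathfrak{c}$, notes $\mathfrak{k}'\cap\mathfrak{h}=\mathfrak{h}\cap\mathfrak{c}^\perp$, and extends $\mathfrak{k}'$ to a full Lagrangian complement of $\mathfrak{h}$ in $\mathfrak{d}$. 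In this case I would compute $\mathfrak{k}'_{\mathfrak{c}}\circ R_2 = \mathfrak{k}'\times\bigl(\gr(q_N)\circ\gr(i_N)^\top\bigr)$, so that
\[
\mathfrak{k}'_{\mathfrak{c}}\circ R_{\mathfrak{c},S} \;=\; \mathfrak{k}'\circ R \circ R_1^\top,
\]
and the inclusion $\mathfrak{k}'\subseteq\mathfrak{k}$ then shows $\mathfrak{k}'\circ R\subseteq\gr(\pi_{\mathfrak{k}}^\sharp)$. Invariance/clean-intersection pieces of the reductive data (r1)--(r2) ensure the composition with $R_1^\top=R_{i_M}\circ R_{q_M}^\top$ is clean and picks out precisely the covectors on $M_{\mathfrak{c},S}$ pulled back to $\mu^{-1}(S)$ and transported through $\pi_{\mathfrak{k}}^\sharp$; this identifies the descended bivector with the restriction of $\pi_{\mathfrak{k}}$.

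Next I would reduce the general case to the compatible one using the standard change-of-complement formula: if $\mathfrak{k},\tilde{\mathfrak{k}}\subseteq\mathfrak{d}$ are two Lagrangian complements to $\mathfrak{h}$, they differ by a skew element $\tilde\tau\in\wedge^2\mathfrak{h}$, and the induced bivectors are related by $\pi_{\tilde{\mathfrak{k}}}=\pi_{\mathfrak{k}}+\rho_R(\tilde\tau)$. Applying this with $\tilde{\mathfrak{k}}$ the compatible complement from the first step, and comparing the construction in \cref{lem:tautoComp} with $\tilde\tau$, one sees that the image of $\tilde\tau$ in $\wedge^2\bigl(\mathfrak{h}/(\mathfrak{h}\cap\mathfrak{c}^\perp)\bigr)$ agrees with $\tau$, while the component of $\tilde\tau$ along $\mathfrak{h}\cap\mathfrak{c}^\perp$ does not affect the image in $\wedge^2\bigl(T\mu^{-1}(S)/\rho_R(\mathfrak{h}\cap\mathfrak{c}^\perp)\bigr)$. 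Combining with the compatible case yields the claimed descent of $\pi_{\mathfrak{k}}+\rho_R(\tau)$ to $\pi_{\mathfrak{k}'_{\mathfrak{c}}}$, and $\mathfrak{h}\cap\mathfrak{c}^\perp$-invariance is automatic from the fact that the reduced bivector $\pi_{\mathfrak{k}'_{\mathfrak{c}}}$ is well-defined.

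The main obstacle will be managing the linear algebra relating $\mathfrak{k},\mathfrak{k}',\tilde{\mathfrak{k}}$ and $\mathfrak{k}'_{\mathfrak{c}}$ simultaneously: in particular, verifying that the change-of-complement formula holds even when the new ``complement'' is only a subspace of $\mathfrak{c}$ rather than a Lagrangian complement of $\mathfrak{h}$ in all of $\mathfrak{d}$. This is why the proof proceeds via the compatible auxiliary choice $\tilde{\mathfrak{k}}$ rather than composing with $\mathfrak{k}'$ directly, and why the $\tau$-correction must be interpreted modulo $\mathfrak{h}\cap\mathfrak{c}^\perp$ throughout.
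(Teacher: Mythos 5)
Your two-step strategy (first a ``compatible'' complement with $\tau=0$, then the change-of-complement formula) differs from the paper's proof, which computes $\mf{k}'_\mf{c}\circ R_\mf{c}\circ R$ in a single pass using the graph description of $\mf{k}':=\mf{k}'_\mf{c}\circ R_\mf{c}$ from \cref{lem:tautoComp}. But the first step, as you have set it up, cannot be carried out. The subspace $\mf{k}'=\mf{k}'_\mf{c}\circ R_\mf{c}$ is the full preimage of $\mf{k}'_\mf{c}$ under $\mf{c}\to\mf{c}/\mf{c}^\perp$: it contains $\mf{c}^\perp$, it is already Lagrangian in $\mf{d}$ (so there is nothing to ``extend''), and, as you yourself note, $\mf{k}'\cap\h=\h\cap\mf{c}^\perp$. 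Whenever $\h\cap\mf{c}^\perp\neq 0$ --- which is the typical situation, e.g.\ in the sewing/triangulation examples where $\h\cap\mf{c}^\perp$ is the residual gauge algebra --- \emph{no} complement of $\h$ can contain $\mf{k}'$, since any subspace containing $\mf{k}'$ meets $\h$ in $\h\cap\mf{c}^\perp$. Hence the inclusion $\mf{k}'\subseteq\mf{k}$ on which your compatible case rests is false, and correspondingly $\mf{k}'\circ R\not\subseteq\gr(\pi_{\mf{k}}^\sharp)$: the relation $\mf{k}'\circ R$ always contains the pairs $(\rho_R(\eta);0)$ for $\eta\in\h\cap\mf{c}^\perp$ and only admits covectors in $\ann\big(\rho_R(\h\cap\mf{c}^\perp)\big)$.

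The idea is repairable, but only after restructuring: choose an isotropic complement $W$ of $\h\cap\mf{c}^\perp$ \emph{inside} $\mf{k}'$ and extend $W$ (not $\mf{k}'$) to a Lagrangian complement $\mf{k}$ of $\h$; then $\mf{k}'=W\oplus(\h\cap\mf{c}^\perp)$, the associated $\tau$ vanishes, and $\mf{k}'\circ R$ is the graph of $\pi_{\mf{k}}^\sharp$ restricted to $\ann\big(\rho_R(\h\cap\mf{c}^\perp)\big)$ summed with the directions $\rho_R(\h\cap\mf{c}^\perp)$ --- exactly the object that descends through $R_{i_M}\circ R_{q_M}^\top$. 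You must also actually verify that $\pi_{\mf{k}}+\rho_R(\tau)$ restricts to a section of $\wedge^2\big(T\mu^{-1}(S)/\rho_R(\h\cap\mf{c}^\perp)\big)$, i.e.\ is tangent to $\mu^{-1}(S)$ modulo the orbit directions; in the paper this comes from $\mf{k}'\subseteq\mf{c}$, so that the vector parts of $\mf{k}'\circ R$ preserve $S$, and it is not ``automatic from well-definedness'' since well-definedness is what is being proved. Your step (b) is sound once (a) is fixed --- the component of $\tilde\tau$ along $\h\cap\mf{c}^\perp$ indeed dies in the quotient --- but at that point you have reproduced the single computation of the paper's proof, namely that $\mf{k}'_\mf{c}\circ R_\mf{c}\circ R=\big\{\rho_R(\eta)+(\pi_{\mf{k}}+\rho_R(\tau))^\sharp\alpha+\alpha \mid \eta\in\h\cap\mf{c}^\perp,\ \alpha\in\ann(\rho_R(\h\cap\mf{c}^\perp))\big\}$, with extra bookkeeping.
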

\begin{proof}
Let $R_\mf{c}:\mf{d}\dasharrow \mf{d}_\mf{c}$ be the relation described in \cref{ex:CoisoAlgRed}. Let $\mu:M\to N$ denote the map supporting $R$, and let $$R_{i_M}:\mbb{T}\mu^{-1}(S)\dasharrow \mbb{T}M, \quad R_{q_M}: \mbb{T}\mu^{-1}(S)\dasharrow \mbb{T}M_{\mf{c},S}$$ denote the standard lifts of the inclusion and projection, respectively. Then 
$$\gr(\pi_{\mf{k}'_\mf{c}}^\sharp)=\mf{k}'_\mf{c}\circ R_\mf{c}\circ R\circ R_{i_M}\circ R_{q_M}^\top.$$

Now, as explained in \cref{lem:tautoComp}, 
\begin{subequations}\label[pluralequation]{eq:TauBivRel}
\begin{equation}\mf{k}'_\mf{c}\circ R_\mf{c}=\mf{k}':=\{(\xi+\tau^\sharp(\xi)+\eta)\mid\xi\in \mf{k},\; \langle\xi,\cdot\rangle\rvert_{\h\cap\mf{c}^\perp}=0, \text{ and } \eta\in\h\cap\mf{c}^\perp\}.\end{equation}
Meanwhile, since $\gr(\pi_\mf{k}^\sharp)=\mf{k}\circ R$, it follows that 
\begin{equation}X+\alpha\sim_R \xi+\eta, \quad \xi\in\mf{k},\eta\in\h\Leftrightarrow X=\rho_R(\eta)+\pi_\mf{k}^\sharp\alpha,\text{ and }\xi=j\circ\rho_R^*\alpha,\end{equation}
\end{subequations}
where $j:\h^*\to\mf{k}$ inverts the isomorphism $\mf{k}\to\mf{d}/\h\cong\h^*$. 

Using \cref{eq:TauBivRel} we compute
$$\mf{k}'_\mf{c}\circ R_\mf{c}\circ R=\big\{\big(\rho_R(\eta)+(\pi_\mf{k}+\rho(\tau)\big)^\sharp(\alpha)+\alpha\big)\rvert\eta\in \h\cap\mf{c}^\perp \text{ and } \alpha\in \ann\big(\rho_R(\h\cap\mf{c}^\perp)\big)\big\}.$$
This shows that $\pi_\mf{k}+\rho(\tau)$ is $\h\cap\mf{c}^\perp$-invariant. Moreover,
$\mf{k}'_\mf{c}\circ R_\mf{c}\subseteq\mf{c}$ and thus elements of $\mf{k}'_\mf{c}\circ R_\mf{c}$ act to preserve $S$. In turn, this implies that $$(\pi_\mf{k}+\rho(\tau))\rvert_{\mu^{-1}(S)}\in \Gamma\bigg(\wedge^2 \big(TM\rvert_{\mu^{-1}(S)}/\rho_R(\h\cap\mf{c}^\perp)\big)\bigg)$$ is in fact a section of 
$\wedge^2 (T\mu^{-1}(S)/\rho_R(\h\cap\mf{c}^\perp))$.

Now, $\gr(\pi_{\mf{k}'_\mf{c}}^\sharp)=\mf{k}'_\mf{c}\circ R_\mf{c}\circ R\circ R_{i_M}\circ R_{q_M}^\top$, i.e.
$$(q_M)_*\big(\pi_\mf{k}+\rho(\tau)\big)\rvert_{\mu^{-1}(S)}=\pi_{\mf{k}'_\mf{c}}.$$
\end{proof}

\subsection{Exact morphisms of  Manin pairs and 2-forms}
In this section, we will examine exact morphisms of Manin pairs in more detail. We recall from \cite{Bursztyn:2009wi,LiBland:2010wi} that once isotropic splittings are chosen, these are uniquely determined by a map between the underlying spaces, and a 2-form on the domain. This description in terms of 2-forms can be useful for simplifying calculations.

Suppose $\mbb{E}$ is an exact Courant algebroid over $N$. That is, the sequence
\begin{equation}\label{eq:ExCourExSeq}0\to T^*N\xrightarrow{\mbf{a}^*}\mbb{E}\xrightarrow{\mbf{a}} TN\to 0\end{equation}
 is exact. Let $s:TN\to\mbb{E}$ be a splitting of \cref{eq:ExCourExSeq} such that $s(TN)\subseteq\mbb{E}$ is isotropic (such splittings are called \emph{isotropic splittings}). Then, as explained in \cite{Severa:2001}, the formula
$$\iota_X\iota_Y\iota_Z\gamma:=\la\Cour{s(X),s(Y)},s(Z)\ra,\quad X,Y,Z\in\mf{X}(N)$$
defines a closed 3-form, $\gamma\in\Omega^3(N)$, called the \emph{curvature 3-form of the splitting $s$}. The isomorphism $s\oplus\mbf{a}^*:TN\oplus T^*N\xrightarrow{\cong} \mbb{E}$ 
identifies the metric on $\mbb{E}$ with $$\la X+\alpha,Y+\beta\ra=\alpha(Y)+\beta(X),\quad X,Y\in TN, \quad \alpha,\beta\in T^*N,$$
and the bracket with
\begin{equation}\label{eq:ExCourBrk}\Cour{X+\alpha,Y+\beta}=[X,Y]+\Lied_X\beta-\iota_Yd\alpha+\iota_X\iota_Y\gamma,\quad X,Y\in\mf{X}(N),\quad \alpha,\beta.\end{equation}
The Courant algebroid with underlying bundle $TN\oplus T^*N$, and bracket given by \cref{eq:ExCourBrk} is called the \emph{$\gamma$-twisted exact Courant algebroid} over $N$, and denoted by $\mbb{T}_\gamma N$.

\begin{example}[The Cartan Courant algebroid \cite{Severa:2001}]\label{ex:CartCourSplit}
The Cartan Courant algebroid $$(\ol{\g}\oplus\g)\times G$$ reviewed in \cref{ex:CartCourAlg} is exact. 
In  \cite{Alekseev:2009tg} it is shown that the map
$s:TG\to(\ol{\g}\oplus\g)\times G$ defined as
$$s:X\to \frac{1}{2}\big(-\iota_X(\d g\:g^{-1}),\iota_X(g^{-1}\d g)\big), \quad X\in TG$$
is a $(\ol{\g}\oplus\g)$-invariant isotropic splitting of the Cartan Courant algebroid. The corresponding curvature 3-form is computed as
$\gamma=\frac{1}{24}\la[g^{-1}\d g,g^{-1}\d g],g^{-1}\d g\ra$
(note that the normalization differs from that in \cite{Alekseev:2009tg}).
\end{example}

Suppose $L\subseteq\mbb{E}$ is a Lagrangian subbundle with support on $S\subseteq N$, such that $\mbf{a}(L)=TS$, then there is a  2-form $\omega\in \Omega^2(S)$ uniquely determined by the formula
$$\iota_X\iota_Y\omega=\la s(X),e\ra,\quad X,Y\in TS,\quad e\in L\text{ and }\mbf{a}(e)=Y.$$
Thus, we may identify $L$ with 
$$L=\gr(\omega^\flat):=\{\big(s(X)+\mbf{a}^*(\iota_X\omega+\alpha)\big)\mid X\in TS, \alpha\in \ann(TS)\}\subseteq\mbb{E}.$$
A quick calculation using \cref{eq:ExCourBrk} shows that $L$ is a Dirac structure with support on $S$ if and only if $\d\omega=i^*\gamma$, where $i:S\to N$ is the inclusion (cf. \cite{Severa:2001} and \cite[Proposition 2.8]{Bursztyn:2009wi}). 

Suppose
$$R:\mbb{T}M\dasharrow \mbb{E}$$
 is a Courant morphism supported on the graph of a map $\mu:M\to N$. Since $\mbb{E}$ is an exact Courant algebroid, $R$ is \emph{exact} if $\mbf{a}(R)=T\gr(\mu)$ (cf. \cref{lem:ExDirStr}), in which case
the considerations above show that $$R=\gr(\omega^\flat)\subseteq \mbb{E}\times \overline{\mbb{T}M},$$
where $\omega\in\Omega^2(M)\cong\Omega^2(\gr(\mu))$ satisfies $\d\omega=\mu^*\gamma$  \cite{Bursztyn:2009wi,LiBland:2010wi}.
That is, $R=R_{\mu,\omega}$, where $R_{\mu,\omega}$ is defined by
\begin{equation}\label{eq:RphiomeDef}X-\iota_X\omega+\mu^*\alpha\sim_{R_{\mu,\omega}} s(\mu_*X)+\mbf{a}^*\alpha,\quad X\in TM, \quad\alpha\in T^*N.\end{equation}

In particular, if $E\subseteq\mbb{E}$ is a Dirac structure, once an isotropic splitting $s:TN\to \mbb{E}$ is chosen, a morphism of Manin pairs
$$R:(\mbb T,T)M\dasharrow (\mbb{E},E)$$
is entirely determined by the underlying map $\mu:M\to N$ between the spaces, and a 2-form $\omega\in \Omega^2(M)$ (satisfying certain conditions).

\begin{remark}[Twisted quasi-Hamiltonian structures]\label{rem:twistedQHam}Recall the $\Gamma$-twisted Cartan Dirac structure described in \cref{ex:twstCartDir},
$$\big((\bar{\g}\oplus\g)^{E_\Gamma},\g_\Gamma\big)\times G^{E_\Gamma}.$$
Suppose we use the splitting $TG^{E_\Gamma}\to \big((\bar{\g}\oplus\g)\times G\big)^{E_\Gamma}$ described in \cref{ex:CartCourSplit} to identify $$\big((\bar{\g}\oplus\g)\times G\big)^{E_\Gamma}\cong \mbb{T}_\gamma G^{E_\Gamma},$$ where 
$\gamma=\frac{1}{24}\la[g^{-1} \d g,g^{-1}\d g],g^{-1}\d g\ra$.
Then exact morphisms of Manin pairs $$R:(\mbb T,T)M\dasharrow \big((\bar{\g}\oplus\g)^{E_\Gamma},\g_\Gamma\big)\times G^{E_\Gamma}$$ are in one-to-one correspondence with quadruples
 $(M,\mu,\rho,\omega)$, where
 \begin{itemize}
 \item $\mu:M\to G^{E_\Gamma}$ is a smooth map,
 \item $\rho:\g^{V_\Gamma}\to \mf{X}(M)$ is a Lie algebra action, and
 \item $\omega\in \Omega^2(M)$ is a 2-form,
 \end{itemize}
such that
\begin{enumerate}
\item $\d\omega=\mu^*\gamma$,
\item $\mu:M\to G^{E_\Gamma}$ is $\g^{V_\Gamma}$-equivariant with respect to the $\g^{V_\Gamma}$ action on $G^{E_\Gamma}$ given on the $e\in{E_\Gamma}$-th factor by
$$\xi\to -\xi_{\on{in}(e)}^R+\xi_{\on{out}(e)}^L,\quad \xi\in \g^{V_\Gamma},$$
\item $\on{ker}(\d\mu)_x\cap\on{ker}(\omega^\flat)_x=0$, for every $x\in M$, and
\item $$\iota_{\rho(\xi)}\omega=\frac{1}{2}\mu^*\sum_{e\in E_\Gamma}\la g_e^{-1}\d g_e,\xi_{\on{out}(e)}\ra+\la\d g_e\:g_e^{-1},\xi_{\on{in}(e)}\ra.$$
\end{enumerate}
(Note that conditions (2),(3) and (4) determine $\rho$ uniquely in terms of $\mu$ and $\omega$). The quadruple $(M,\mu,\rho,\omega)$ corresponds to the morphism of Manin pairs \cref{eq:RphiomeDef}, supported on the graph of $\mu$.

The quadruples $(M,\mu,\rho,\omega)$ generalize quasi-Hamiltonian $G^{E_\Gamma}$-structures in the sense that they incorporate an automorphism of $G^{E_\Gamma}$ into their definition. Here the automorphism is simply the permutation of factors described by the permutation graph, $\Gamma$. Allowing arbitrary automorphisms leads to a definition of twisted quasi-Hamiltonian spaces along the lines of the one given in  \cite{LiBland:2012vo} for twisted quasi-Poisson structures (cf. \cite[Definition 3]{LiBland:2012vo}).

Let $G_{\on{big}}:=G^{V_\Gamma}\rtimes \mbb{Z}$, where $\mbb{Z}$ acts by permuting the factors according to the graph $\Gamma$ (cf. \cref{rem:DiscreteCartDir}).
As explained to the authors by Eckhard Meinrenken, quasi-Hamiltonian $\big((\bar{\g}\oplus\g)^{E_\Gamma},\g_\Gamma\big)\times G^{E_\Gamma}$-structures are quasi-Hamiltonian $G_{big}$-structures in the (original) sense of Alekseev-Malkin-Meinrenken \cite{Alekseev97} for which the moment map takes values in $G^{V_\Gamma}\times \{-1\}\subset G_{big}$  (cf. \cref{rem:DiscreteCartDir}).
\end{remark}

We now examine the behaviour of twisted exact Courant algebroids under the partial reduction procedure described in \cref{thm:PartRed}.
Suppose that $\mf{d}\times N$ is an exact Courant algebroid, and a $\mf{d}$-invariant isotropic splitting $s:TN\to \mf{d}\times N$ is chosen, defining an isomorphism 
$$\mf{d}\times N\cong \mbb{T}_\gamma N,$$
We let $E\subseteq\mbb{T}_\gamma N$ denote the Dirac structure corresponding to $\h\times N\subseteq\mf{d}\times N$ under this isomorphism.
Suppose 
$$R_{\mu,\omega}:(\mbb T,T)M\dasharrow(\mbb{T}_\gamma N,E)\cong (\mf{d},\h)\times N$$
is a exact morphism of Manin pairs, and $S\subseteq N$ is an orbit of the coisotropic subalgebra $\mf{c}\subseteq\mf{d}$,
and the assumptions of both \cref{thm:PartRed,thm:ExactPartRed} hold for the reductive data $(\mf{c},S)$.
Let $i_N:S\to N$ and $i_M:\mu^{-1}(S)\to M$ denote the inclusions, and $q_N:S\to N_{\mf{c},S}$ and $q_M:\mu^{-1}(S)\to M_{\mf{c},S}$ the quotients (by $\mf{c}^\perp$ and $\h\cap\mf{c}^\perp$, respectively). 



We would like to define a splitting of the reduced Courant algebroid $\mf{d}_{\mf{c}^\perp}\times N_{\mf{c},S}$. As explained in \cite[Proposition 3.6]{Bursztyn:2007ko}, unless $s(TS)\subseteq \mf{c}$, this will depend on a choice of a $\mf{c}^\perp$-invariant connection 1-form $\theta\in\Omega^1(S,\mf{c}^\perp)$ for the bundle $q_N:S\to N_{\mf{c},S}$. For any $X\in\mf{X}(N_{\mf{c},S})$, let $X^h\in\mf{X}(S)$ denote its horizontal lift with respect to the chosen connection. The map $X\to s(X^h)\in\Gamma(\mf{d}\times S)$ may not take values in $\Gamma(\mf{c}\times S)$, but 
$$s_\theta:X\to s(X^h)+\mbf{a}^*(\iota_{X^h}\la\theta,\vartheta_s\ra)$$
 does, where $\vartheta_s\in\Omega^1\big(S,(\mf{c}^\perp)^*\big)$ is defined by $\la\xi,\vartheta_s\ra:=s^*\xi,$ for $\xi\in\mf{c}^\perp.$ Note also that $s_\theta(X)$ is $\mf{c}^\perp$-invariant, and hence descends to a unique section of $\Gamma(\mf{c}\times N_{\mf{c},S})$.
Thus, the composition 
$$\mf{X}(N_{\mf{c},S})\xrightarrow{s_\theta} \Gamma(\mf{c}\times S)\to \Gamma(\mf{c}/\mf{c}^\perp\times N_{\mf{c},S})=\Gamma(\mf{d}_\mf{c}\times N_{\mf{c},S})$$
 defines an isotropic splitting $\tilde s_\theta:TN_{\mf{c},S}\to\mf{d}_\mf{c}\times N_{\mf{c},S}$.
We define $\tilde\gamma_\theta\in\Omega^3(N_{\mf{c},S})$ to be the associated curvature 3-form.
Let $E_{\mf{c},S}\subseteq\mbb{T}_{\tilde\gamma_\theta} N_{\mf{c},S}$ denote the Dirac structure corresponding to $\h_{\mf{c},S}$ under the isomorphism defined by $\tilde s_\theta$. 

\begin{proposition}[Partial reduction for split exact Courant algebroids]\label{thm:PartRedSplEx}
Suppose that
\begin{equation}\label{eq:SpExPartRedMMPin}R_{\mu,\omega}:(\mbb T,T)M\dasharrow(\mbb{T}_\gamma N,E)\cong (\mf{d},\h)\times N\end{equation}
is an exact morphism of Manin pairs, that the assumptions of both \cref{thm:PartRed,thm:ExactPartRed} hold for the reductive data $(\mf{c},S)$. Let $\theta\in\Omega^1(S,\mf{c}^\perp)$ be a $\mf{c}^\perp$-invariant connection 1-form for the bundle $q_N:S\to N_{\mf{c},S}$.

Then, under the isomorphism $\mf{d}_\mf{c} \times N_{\mf{c},S}\cong\mbb{T}_{\tilde\gamma_\theta} N_{\mf{c},S}$ defined by the isotropic splitting $\tilde s_\theta$, the reduced morphism of Manin pairs \labelcref{eq:PartRedMMPout}, described in \cref{thm:PartRed}, is identified with
\begin{equation}\label{eq:SpExPartRedMMPout}R_{\tilde\mu,\tilde\omega_\theta}:(\mbb{T},T)M_{\mf{c},S}\dasharrow (\mbb{T}_{\tilde\gamma_\theta}N_{\mf{c},S},E_{\mf{c},S}),\end{equation}
where $\tilde\omega_\theta\in  \Omega^2(M_{\mf{c},S})$ is defined by the equation 
\begin{equation}\label{eq:omegatheta}q_M^*\tilde\omega_\theta=i^*_M\omega-\mu^*\la\theta,\vartheta_s-\frac{1}{2}s\circ\mbf{a}(\theta)\ra,\end{equation}
 and 
$\tilde\mu:M_{\mf{c},S}\to N_{\mf{c},S}$ is the unique map such that 
$$\begin{tikzpicture}
\mmat{m}{\mu^{-1}(S)&S\\M_{\mf{c},S}&N_{\mf{c},S}\\};
\path[->] (m-1-1) edge node{$\mu$} (m-1-2);
\path[->] (m-1-1) edge node[swap]{$q_M$} (m-2-1);
\path[->] (m-2-1) edge node{$\tilde \mu$} (m-2-2);
\path[->] (m-1-2) edge node{$q_N$} (m-2-2);
\end{tikzpicture}$$
commutes.

Moreover, the 2-form $\tilde\omega_\theta$ is independent of $\theta$ if $s(TS)\subseteq \mf{c}$, more precisely, the term $\mu^*\la\theta,\vartheta_s-\frac{1}{2}s\circ\mbf{a}(\theta)\ra$ in \cref{eq:omegatheta} vanishes. 
\end{proposition}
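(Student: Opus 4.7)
My plan is to combine the general reduction results (\cref{thm:PartRed,thm:ExactPartRed}) with the one-to-one correspondence between exact morphisms of Manin pairs into a split exact Courant algebroid and pairs (supporting map, 2-form), as recalled in the discussion preceding \cref{eq:RphiomeDef}. Since $R_{\mf{c},S}$ is exact by \cref{thm:ExactPartRed}, under the splitting $\tilde s_\theta$ of $\mf{d}_\mf{c}\times N_{\mf{c},S}$ it is identified with $R_{\tilde\mu,\tilde\omega}$ for a \emph{unique} 2-form $\tilde\omega\in\Omega^2(M_{\mf{c},S})$ satisfying $\d\tilde\omega=\tilde\mu^*\tilde\gamma_\theta$. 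The whole proposition thus reduces to computing this 2-form and checking that it agrees with the formula \cref{eq:omegatheta}.

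First I would verify that $R_{\mf{c},S}$ is supported on $\gr(\tilde\mu)$: by construction, $R_2\circ R\circ R_1^\top$ is supported on the image in $M_{\mf{c},S}\times N_{\mf{c},S}$ of $\{(q_M(p),q_N(\mu(p)))\mid p\in\mu^{-1}(S)\}$, which is precisely $\gr(\tilde\mu)$ by the commutative diagram defining $\tilde\mu$. Then, to pin down $\tilde\omega$, I would trace tangent vectors $\tilde X,\tilde Y\in T_{[p]}M_{\mf{c},S}$ through the composition: lift to $X,Y\in T_p\mu^{-1}(S)\hookrightarrow T_pM$, push forward through $R_{\mu,\omega}$ using the formula \cref{eq:RphiomeDef} (which introduces the contribution $i_M^*\omega$), and then reinterpret the resulting element of $\mf{d}\times N$ in the new splitting $\tilde s_\theta$. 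The change of splitting $s\rightsquigarrow \tilde s_\theta$ adds, for each of the two slots, a correction of the form $\mbf{a}^*\iota_{(\cdot)^h}\la\theta,\vartheta_s\ra$ (from the definition of $\tilde s_\theta$) together with a self-interaction coming from substituting the horizontal lift back; antisymmetrising in $\tilde X,\tilde Y$ turns the self-interaction into the term $\tfrac12\la\theta,s\circ\mbf{a}(\theta)\ra$ (the $\tfrac12$ reflecting the usual symmetric/antisymmetric decomposition of a bilinear pairing of two copies of $\theta$). Combining the two contributions yields
\[
q_M^*\tilde\omega = i_M^*\omega - \mu^*\la\theta,\vartheta_s-\tfrac12\,s\circ\mbf{a}(\theta)\ra,
\]
which is exactly \cref{eq:omegatheta}. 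I expect the bookkeeping in this change-of-splitting computation—keeping track of which terms are pulled back from $N$, which live on $S$, and which have already descended to $N_{\mf{c},S}$—to be the main technical obstacle; uniqueness of the 2-form associated to an exact morphism is what keeps this honest.

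Finally, for the $\theta$-independence statement when $s(TS)\subseteq \mf{c}$: for $X\in TS$ and $\xi\in\mf{c}^\perp$ one has $\la\xi,\iota_X\vartheta_s\ra=\la s(X),\xi\ra=0$, because $s(X)\in\mf{c}$ and $\mf{c}^\perp=\mf{c}^\perp$ annihilates $\mf{c}$; hence $\vartheta_s$ vanishes on $TS$. Likewise, since $\mf{c}^\perp\subseteq\mf{c}$ preserves $S$, $\mbf{a}(\iota_X\theta)\in TS$ and therefore $s\circ\mbf{a}(\theta)$ takes values in $s(TS)\subseteq\mf{c}$, so its pairing against the $\mf{c}^\perp$-valued form $\theta$ also vanishes. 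Both summands in the correction $\mu^*\la\theta,\vartheta_s-\tfrac12\,s\circ\mbf{a}(\theta)\ra$ thus vanish identically, proving independence of $\theta$ in this case.
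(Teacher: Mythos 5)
Your overall strategy is sound and, at the structural level, close to the paper's: both arguments come down to tracking how the change of splitting $s\rightsquigarrow\tilde s_\theta$ modifies the $2$-form representing the exact morphism, and your treatment of the support of $R_{\mf{c},S}$ and of the $\theta$-independence when $s(TS)\subseteq\mf{c}$ is correct and complete (the paper phrases the latter as $\vartheta_s=0$ and $s^*\theta=0$, which is the same observation). However, the central step is asserted rather than proved: you claim that ``antisymmetrising the self-interaction'' produces exactly the term $\tfrac12\la\theta,s\circ\mbf{a}(\theta)\ra$, but this is precisely where the content of the proposition lies. The paper's proof isolates the needed identity as
\begin{equation*}
\iota_{\mbf{a}(\xi)}\,\bigl\la\theta,\vartheta_s-\tfrac12\,s\circ\mbf{a}(\theta)\bigr\ra \;=\; s^*\xi,\qquad \xi\in\mf{c}^\perp,
\end{equation*}
whose verification uses that $(\xi,\eta)\mapsto\la\xi,s\circ\mbf{a}(\eta)\ra$ is skew-symmetric on $\mf{c}^\perp$ (a consequence of $\mf{c}^\perp$ being isotropic and $s$ being an isotropic splitting of $\mf{d}\times N\cong\mbb{T}_\gamma N$). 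Without this identity the coefficient $\tfrac12$, and indeed the exact shape of the correction term, are not justified; with it, one checks the claimed relation separately on vertical vectors (where the identity applies) and on horizontal lifts (where $\iota_{X^h}\theta=0$ and the definition of $s_\theta$ supplies the $\la\theta,\vartheta_s\ra$ contribution), and only then concludes that the reduced relation is $R_{\tilde\mu,\tilde\omega_\theta}$.

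A second, smaller omission: equation \cref{eq:omegatheta} presupposes that $i_M^*\omega-\mu^*\la\theta,\vartheta_s-\tfrac12\,s\circ\mbf{a}(\theta)\ra$ is basic for $q_M$, i.e.\ both $\h\cap\mf{c}^\perp$-invariant and horizontal, so that $\tilde\omega_\theta$ exists at all. In your setup this resurfaces as the requirement that your pointwise computation be independent of the chosen lifts $X,Y$ of $\tilde X,\tilde Y$; the paper proves basicness directly, using the $\mf{d}$-invariance of $s$ and the relation \cref{eq:RphiomeDef} applied to $\xi\in\h\cap\mf{c}^\perp$. You should either establish basicness up front, as the paper does, or note explicitly that it follows from the lift-independence of your computation --- but one of the two must appear, and the lift-independence again hinges on the displayed identity above.
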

We defer the proof of \cref{thm:PartRedSplEx} to \cref{app:PrtRedPf}

\begin{remark}\label{rem:cLag2Form}
In the special case where $\mf{c}=\mf{c}^\perp$ is Lagrangian, then $\mf{c}^\perp$ acts transitively on $S$, so \cref{eq:omegatheta} simplifies to 
$$q_M^*\tilde\omega_\theta=i^*_M\omega-\frac{1}{2}\mu^*\la\theta,s\circ\mbf{a}(\theta)\ra.$$
\end{remark}
\begin{remark}\label{rem:SymmetricOrbits}
Consider the $\Gamma$-twisted Cartan Dirac structure 
$\big((\ol{\g}\oplus\g)^{E_\Gamma},\g_\Gamma)\times G^{E_\Gamma}$.
The conditions that $\mf{c}\subseteq(\ol{\g}\oplus\g)^{E_\Gamma}$ be symmetric and that $S\subseteq G^{E_\Gamma}$ be the $\mf{c}$-orbit through the identity imply that 
\begin{equation}\label{eq:sInL}s(TS)\subseteq \mf{c}\end{equation}
 Indeed, \cref{eq:sInL} is easily checked at the identity of $G^{E_\Gamma}$, and the invariance of $s$ implies that it holds at every other point in the $\mf{c}$-orbit, $S$.

Thus, \cref{eq:omegatheta} simplifies to 
$$q_M^*\tilde\omega_\theta=i^*_M\omega,$$
and \cref{thm:MainThmIntro} follows as a corollary to \cref{thm:PartRedSplEx}.
\end{remark}

\begin{example}[Conjugacy clases and quasi-Hamiltonian geometry]
Recall the splitting $s:TG\to (\ol{\g}\oplus\g)\times G$ of the Cartan Courant algebroid described in \cref{ex:CartCourSplit}.
Suppose $\mf{c}=\g_\Delta\subseteq (\ol{\g}\oplus\g)$ is the diagonal subalgebra, and $S\subseteq G$ is a conjugacy class. Then,
$$s\circ\mbf{a}(\xi,\xi)=(\xi-\Ad_g\xi,-\Ad_{g^{-1}}\xi+\xi), \quad\xi\in\g,\quad g\in G.$$
So $$\iota_{\mbf{a}(\eta,\eta)}\iota_{\mbf{a}(\xi,\xi)}\frac{1}{2}\mu^*\la\theta,s\circ\mbf{a}(\theta)\ra=\la\eta,\Ad_g\xi\ra-\la\Ad_g\eta,\xi\ra.$$ 
Thus $\frac{1}{2}\la\theta,s\circ\mbf{a}(\theta)\ra$
is proportional to the quasi-Hamiltonian 2-form on the conjugacy class, $S$, described in \cite[Proposition 3.1]{Alekseev97}.

Suppose now that $M$ is a quasi-Hamiltonian $G$ space with moment map $\mu:M\to G$, and 2-form $\omega$. 
It follows that partial reduction of the morphism of Manin pairs
$$R_{\mu,\omega}:(\mbb T,T)M\dasharrow (\ol{\g}\oplus\g,\g_\Delta)\times G$$
by $(\mf{c},S)$, yields the same symplectic structure as the quasi-Hamiltonian reduction
$$M\circledast S/\!/_{1} G,$$
where $M\circledast S$ denotes the fusion of $M$ with $S$, as described in \cite{Alekseev97}.
\end{example}

\subsection{Commutativity of reductions}
Suppose that $S_1,S_2\subseteq N$ intersect cleanly, and 
that $(\mf{c}_1,S_1)$ and $(\mf{c}_2,S_2)$ are both  reductive data for a morphism of Manin pairs 
 \begin{equation}\label{eq:PartRedMMPinCom}R:(\mbb T,T)M\dasharrow(\mf{d},\h)\times N.\end{equation}

Notice that the Lie algebra $\mf{c}_{2,1}:=R_{\mf{c}_1}\circ\mf{c}_2\subseteq \mf{d}_{\mf{c}_1}$ is coisotropic. 
Let $S_{2,1}$ denote the image of $S_2\cap S_1$ under the quotient map $S_1\to N_{\mf{c}_1,S_1}$. In practise, $(\mf{c}_{2,1},S_{2,1})$ will often form reductive data for $$R_{\mf{c}_1,S_1}:(\mbb{T} , T )M_{\mf{c}_1,S_1}\dasharrow (\mf{d}_{\mf{c}_1},\h_{\mf{c}_1})\times N_{\mf{c}_1,S_1}.$$

Similarly, with $\mf{c}_{1,2}:=R_{\mf{c}_2}\circ\mf{c}_1\subseteq \mf{d}_{\mf{c}_2}$ and $S_{1,2}$ the image of $S_1\cap S_2$ under the quotient map $S_2\to N_{\mf{c}_2,S_2}$, the pair  $(\mf{c}_{1,2},S_{1,2})$ will often form reductive data for
$$R_{\mf{c}_2,S_2}:(\mbb{T} , T )M_{\mf{c}_2,S_2}\dasharrow (\mf{d}_{\mf{c}_2},\h_{\mf{c}_2})\times N_{\mf{c}_2,S_2}.$$


\begin{proposition}[Commutativity of reductions]\label{prop:ComPartRed}
Suppose that $S_1,S_2\subseteq N$ intersect cleanly,  
that $(\mf{c}_1,S_1)$, $(\mf{c}_2,S_2)$ and $(\mf{c}_1\cap\mf{c}_2,S_1\cap S_2)$ all form reductive data for the morphism of Manin pairs 
 \begin{equation*}R:(\mbb T,T)M\dasharrow(\mf{d},\h)\times N,\end{equation*}
while  $(\mf{c}_{2,1},S_{2,1})$ and $(\mf{c}_{1,2},S_{1,2})$ form reductive data for $$R_{\mf{c}_1,S_1}:(\mbb{T} , T )M_{\mf{c}_1,S_1}\dasharrow (\mf{d}_{\mf{c}_1},\h_{\mf{c}_1})\times N_{\mf{c}_1,S_1}$$
and 
$$R_{\mf{c}_2,S_2}:(\mbb{T} , T )M_{\mf{c}_2,S_2}\dasharrow (\mf{d}_{\mf{c}_2},\h_{\mf{c}_2})\times N_{\mf{c}_2,S_2},$$
respectively.
 
 Then
 \begin{equation}\label{eq:ComRed}(R_{\mf{c}_1,S_1})_{\mf{c}_{2\!,\!1},S_{2\!,\!1}}=R_{\mf{c}_1\cap\mf{c}_2,S_1\cap S_2}=(R_{\mf{c}_2,S_2})_{\mf{c}_{1\!,\!2},S_{1\!,\!2}}.\end{equation}
 
\end{proposition}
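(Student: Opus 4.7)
My strategy is to reduce \cref{eq:ComRed} to purely ``kinematic'' identities on each side of $R$, by invoking the explicit formula $R_{\mf c,S}=R_2\circ R\circ R_1^\top$ from \cref{thm:PartRed} together with associativity of clean composition of Courant relations (\cref{prop:CompCourRel}). Substituting this formula twice, and using $(R\circ R')^\top=(R')^\top\circ R^\top$, each outer term of \cref{eq:ComRed} becomes of the form $\widetilde R_2\circ R\circ \widetilde R_1^\top$, while the middle term is likewise $R_2''\circ R\circ(R_1'')^\top$ for the combined reductive data. Thus the proposition reduces to matching the iterated target- and source-side relations with the combined ones; by the obvious symmetry between the two orderings, it suffices to prove the single equality $(R_{\mf c_1,S_1})_{\mf c_{2,1},S_{2,1}}=R_{\mf c_1\cap \mf c_2,S_1\cap S_2}$.

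On the target side, the crux is the algebraic identity
\begin{equation}\label{eq:algRed}
R_{\mf c_{2,1}}\circ R_{\mf c_1}=R_{\mf c_1\cap\mf c_2}
\end{equation}
of Courant relations from $\mf d$ to $\mf d_{\mf c_1\cap\mf c_2}$, which rests on the canonical isomorphism $(\mf d_{\mf c_1})_{\mf c_{2,1}}\cong \mf d_{\mf c_1\cap \mf c_2}$ coming from the linear-algebra identity $(\mf c_1\cap\mf c_2)^\perp=\mf c_1^\perp+\mf c_2^\perp$ in any non-degenerate quadratic vector space; tracing the definition of $R_{\mf c}$ from \cref{ex:CoisoAlgRed}, one checks directly that under either composition $\xi\in\mf d$ is related to $[\xi]$ exactly when $\xi\in\mf c_1\cap\mf c_2$. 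The geometric parallel on $N$ identifies $(N_{\mf c_1,S_1})_{\mf c_{2,1},S_{2,1}}$ with $N_{\mf c_1\cap\mf c_2,S_1\cap S_2}$: both spaces are canonically the quotient of $S_1\cap S_2$ by the combined $(\mf c_1^\perp+\mf c_2^\perp)$-action, and this identification intertwines the relevant standard lifts. The analogous statement on the source is that $M_{\mf c_1\cap\mf c_2,S_1\cap S_2}$ is canonically $(M_{\mf c_1,S_1})_{\mf c_{2,1},S_{2,1}}$, both realized as the quotient of $\mu^{-1}(S_1\cap S_2)=\mu^{-1}(S_1)\cap\mu^{-1}(S_2)$ by $\h\cap(\mf c_1\cap\mf c_2)^\perp=\h\cap(\mf c_1^\perp+\mf c_2^\perp)$.

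The principal obstacle is verifying \emph{cleanness} of all intermediate compositions, so that \cref{prop:CompCourRel} legitimately applies at each stage. On the target side this follows from the assumed clean intersection of $S_1$ with $S_2$ together with reductivity of the three pairs $(\mf c_1,S_1)$, $(\mf c_2,S_2)$, and $(\mf c_1\cap\mf c_2,S_1\cap S_2)$. On the source side, the clean-intersection clause in (r2) for each of the three reductive data forces $\mu^{-1}(S_1\cap S_2)=\mu^{-1}(S_1)\cap\mu^{-1}(S_2)$ to be a clean intersection, while the residual $\h\cap\mf c_i^\perp$-orbits form regular foliations by hypothesis; the required transversality between iterated quotients then reduces to the vector-space identity $\h\cap(\mf c_1^\perp+\mf c_2^\perp)=(\h\cap\mf c_1^\perp)+(\h\cap\mf c_1\cap\mf c_2)^\perp\bmod{\mf c_1^\perp}$ at each point, which is automatic once the quotients exist as manifolds. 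Once cleanness is in place and \cref{eq:algRed} together with its source- and $N$-side analogues is established, associativity of composition of Courant relations yields the first equality in \cref{eq:ComRed}; the second follows by the interchange $1\leftrightarrow 2$.
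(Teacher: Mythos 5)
Your proposal follows essentially the same route as the paper's proof in \cref{app:ComParRedPrf}: reduce by symmetry to the first equality, establish $R_{\mf{c}_{2,1}}\circ R_{\mf{c}_1}=R_{\mf{c}_1\cap\mf{c}_2}$ from the coisotropy of $\mf{c}_1\cap\mf{c}_2$ (equivalently $(\mf{c}_1\cap\mf{c}_2)^\perp=\mf{c}_1^\perp+\mf{c}_2^\perp$ together with the chain $\mf{c}_1^\perp\subseteq\mf{c}_1\cap\mf{c}_2\subseteq\mf{c}_2$), and identify the iterated quotients of $S_1\cap S_2$ and of $\mu^{-1}(S_1\cap S_2)$ with the corresponding one-step quotients before reassembling the composition. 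The only differences are cosmetic: your cleanness discussion is a bit more elaborate (and the displayed identity for the residual subalgebras is garbled as written, though inessential), where the paper simply invokes the assumed reductive data.
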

The proof of this proposition is deferred to \cref{app:ComParRedPrf}.

In this paper, \cref{prop:ComPartRed} will always be applied as the following corollary:
\begin{corollary}[Reductions of distinct factors commute]\label{cor:ComPartRed}
Suppose $\mf{d}\times N,\mf{d}'\times N'$, and $\mf{d}''\times N''$ are all action Courant algebroids, $S\subseteq N$ and $S'\subseteq N'$ are submanifolds and 
$(\mf{c}\oplus\mf{d}'\oplus\mf{d}'',S\times N'\times N'')$, $(\mf{d}\oplus\mf{c}'\oplus\mf{d}'',N\times S'\times N'')$ and $(\mf{c}\oplus\mf{c}'\oplus\mf{d}'',S\times S'\times N'')$ each form reductive data for a morphism of Manin pairs
$$R:(\mbb T,T)M\dasharrow(\mf{d}\oplus\mf{d}'\oplus\mf{d}'',\h)\times (N\times N'\times N'').$$
Then 
\begin{multline*}(R_{\mf{c}\oplus\mf{d}'\oplus\mf{d}'',S\times N'\times N''})_{(\mf{d}_\mf{c}\oplus\mf{c}'\oplus\mf{d}'',N_{\mf{c},S}\times S'\times N'')}=R_{\mf{c}\oplus\mf{c}'\oplus\mf{d}'',S\times S'\times N''}\\=(R_{\mf{d}\oplus\mf{c}'\oplus\mf{d}'',N\times S'\times N''})_{(\mf{c}\oplus\mf{d}'_{\mf{c}'}\oplus\mf{d}'',S\times N'_{\mf{c}'\!,S'}\times N'')}.\end{multline*}
\end{corollary}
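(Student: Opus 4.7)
The plan is to derive this as a direct application of Proposition~3.3 (\cref{prop:ComPartRed}), with the product structure making all the intermediate data identifications immediate. I would set
$$\mf{c}_1 := \mf{c}\oplus\mf{d}'\oplus\mf{d}'',\quad S_1 := S\times N'\times N'',\quad \mf{c}_2 := \mf{d}\oplus\mf{c}'\oplus\mf{d}'',\quad S_2 := N\times S'\times N''.$$
The three reductive-data hypotheses of Proposition~3.3 for $R$ are precisely the three hypotheses already listed in the corollary (note that $\mf{c}_1\cap\mf{c}_2=\mf{c}\oplus\mf{c}'\oplus\mf{d}''$ and $S_1\cap S_2=S\times S'\times N''$), and the clean intersection of $S_1$ with $S_2$ inside $N\times N'\times N''$ is automatic since both are products of full manifolds along complementary factors.

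Next I would algebraically identify the iterated reductive data. The crucial observation is that $\mf{c}_1^\perp=\mf{c}^\perp\oplus 0\oplus 0$ (symmetrically $\mf{c}_2^\perp=0\oplus\mf{c}'^\perp\oplus 0$), since $\mf{d}'$ and $\mf{d}''$ appear as full summands and hence have zero orthogonal complement in themselves. Applying the composition formula $R_\mf{c}\circ\mf{l}=(\mf{l}\cap\mf{c})/(\mf{l}\cap\mf{c}^\perp)$ from \cref{ex:CoisoAlgRed} with $\mf{l}=\mf{c}_2$ gives
$$\mf{c}_{2,1}=R_{\mf{c}_1}\circ\mf{c}_2=(\mf{c}\oplus\mf{c}'\oplus\mf{d}'')/(\mf{c}^\perp\oplus 0\oplus 0)=\mf{d}_{\mf{c}}\oplus\mf{c}'\oplus\mf{d}'',$$
and on the base $S_{2,1}$ is the image of $S_1\cap S_2$ under the quotient map $S_1\to N_{\mf{c}_1,S_1}=N_{\mf{c},S}\times N'\times N''$, which is $N_{\mf{c},S}\times S'\times N''$. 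Reversing the roles of the two factors yields $\mf{c}_{1,2}=\mf{c}\oplus\mf{d}'_{\mf{c}'}\oplus\mf{d}''$ and $S_{1,2}=S\times N'_{\mf{c}',S'}\times N''$, matching the remaining data in the corollary.

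Finally I would invoke Proposition~3.3 to conclude
$$(R_{\mf{c}_1,S_1})_{\mf{c}_{2,1},S_{2,1}}=R_{\mf{c}_1\cap\mf{c}_2,\,S_1\cap S_2}=(R_{\mf{c}_2,S_2})_{\mf{c}_{1,2},S_{1,2}},$$
and substitute the identifications from the previous paragraph to recover the stated chain of equalities.

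The main (minor) obstacle is that Proposition~3.3 requires two additional reductive-data hypotheses beyond what the corollary states explicitly, namely that $(\mf{c}_{2,1},S_{2,1})$ be reductive for $R_{\mf{c}_1,S_1}$ and $(\mf{c}_{1,2},S_{1,2})$ for $R_{\mf{c}_2,S_2}$. I expect these to come essentially for free from the product structure: because $\mf{c}_1$ acts trivially on the second and third factors, the reduction $R_{\mf{c}_1,S_1}$ affects only the first factor of the ambient base, so the clean-intersection and regular-foliation conditions needed for the secondary reduction along $(\mf{c}_{2,1},S_{2,1})$ are inherited directly from the already-assumed reductive conditions on $(\mf{c}_2,S_2)$ and $(\mf{c}_1\cap\mf{c}_2,S_1\cap S_2)$. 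A brief diagram chase, passing through the commuting square $\mu^{-1}(S_1\cap S_2)\to\mu^{-1}(S_1)$ and $S_1\cap S_2\to S_1$, should close this gap without any substantive new input.
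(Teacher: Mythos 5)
Your proposal is correct and matches the paper's (implicit) argument: the corollary is presented as a direct application of \cref{prop:ComPartRed} with exactly the identifications $\mf{c}_1\cap\mf{c}_2=\mf{c}\oplus\mf{c}'\oplus\mf{d}''$, $\mf{c}_{2,1}=\mf{d}_\mf{c}\oplus\mf{c}'\oplus\mf{d}''$, $S_{2,1}=N_{\mf{c},S}\times S'\times N''$ (and symmetrically) that you compute, and the paper supplies no further proof. Your remark that \cref{prop:ComPartRed} additionally requires $(\mf{c}_{2,1},S_{2,1})$ and $(\mf{c}_{1,2},S_{1,2})$ to be reductive data for the once-reduced morphisms --- a hypothesis absent from the corollary's statement and needing the descent argument through the product structure that you sketch --- is a point the paper silently glosses over, so you are if anything more careful than the source.
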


\section{Quasi-Hamiltonian structures on  moduli spaces of flat connections}\label{sec:SymplStrMMP}

Suppose that $(\Sigma,V)$ is a marked surface, i.e.\ $\Sigma$ is a compact oriented surface and $V\subset\partial\Sigma$ a finite set which intersects each component of $\Sigma$ and $\partial\Sigma$ non-trivially. Let $\Gamma$ be the boundary graph of $\Sigma$ with the vertex set $V$ (see Section \ref{sec:qHamIntro} for details). In this section we shall prove \cref{thm:qhamIntro} using quasi-Hamiltonian reduction. In other words, we want to construct an exact morphism of Manin pairs
\begin{equation}\label{eq:Rsigma}
R_{\Sigma,V}:(\mathbb T,T)\mathcal M_{\Sigma,V}\dasharrow  \big((\ol{\g}\oplus\g)^{E_{\Gamma}},\g_{\Gamma}\big)\times G^{E_{\Gamma}}
\end{equation}
over the map
$$\mu:\mathcal M_{\Sigma,V}\to G^{E_{\Gamma}}$$
given by the boundary holonomies.

Let us start with a simple case:
\begin{proposition}[union of polygons]\label{prop:unPolyg}
Let $\Sigma$ be a disjoint union of discs and $V\subset\partial\Sigma$ a finite subset meeting every boundary circle. Then there is a unique exact morphism
$$(\mathbb T,T)\mathcal M_{\Sigma,V}\dasharrow  \big((\ol{\g}\oplus\g)^{E_{\Gamma}},\g_{\Gamma}\big)\times G^{E_{\Gamma}}$$
over $\mu$.
\end{proposition}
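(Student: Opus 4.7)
The plan is to reduce to the case of a single polygon and then invoke \cref{ex:EInvSubMMP}. Since a disjoint union of discs yields a product decomposition $\mathcal{M}_{\Sigma,V} = \prod_i \mathcal{M}_{P_{n_i}}$ under which both $\mu$ and the target Manin pair factor as products, it suffices to construct a unique exact morphism for a single polygon $P_n$.

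For a polygon $P_n$ with $n$ marked boundary points, simple connectivity of the disc implies that $\Pi_1(P_n,V)$ is freely generated by the boundary edges $e_1, \dots, e_n$ subject to the single cyclic relation that their product around the boundary is the identity. Thus $\mu$ identifies $\mathcal{M}_{P_n}$ with the embedded submanifold
$$\{(g_1, \dots, g_n) \in G^n \mid g_n \cdots g_1 = 1\} \subset G^n = G^{E_\Gamma}.$$
This submanifold is invariant under the $\g_\Gamma$-action by infinitesimal gauge transformations, since such transformations preserve the holonomy around the contractible boundary loop. We are therefore in the setting of \cref{ex:EInvSubMMP} with $M = \mathcal{M}_{P_n}$, $N = G^{E_\Gamma}$, and $E = \g_\Gamma \times G^{E_\Gamma}$. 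That example, together with the uniqueness remark following it, produces the unique morphism of Manin pairs supported on $\gr(\mu)$, namely $R_{E,\mathcal{M}_{P_n}}$. Taking products handles the disjoint union, so both existence and uniqueness are established.

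It remains to verify exactness. The criterion at the end of \cref{ex:EInvSubMMP} has two requirements: that the ambient Courant algebroid be exact along $\mathcal{M}_{\Sigma,V}$, and that $\mbf{a}\colon \g_\Gamma \times \mathcal{M}_{\Sigma,V} \to T \mathcal{M}_{\Sigma,V}$ be surjective. The first condition holds globally, since the $\Gamma$-twisted Cartan Courant algebroid is a product of exact Cartan Courant algebroids, which are known to be exact by \cref{ex:CartCourAlg}. For the second it again suffices to work one polygon at a time, and here the key point is that $G^V$ acts transitively on $\mathcal{M}_{P_n}$: any two trivialized flat $G$-connections on the simply connected surface $P_n$ are gauge equivalent. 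Transitivity of the group action yields surjectivity of the infinitesimal action at every point, and this transitivity statement is the main---indeed essentially the only---point of substance to verify.
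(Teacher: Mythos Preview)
Your proof is correct and follows the same approach as the paper: observe that $\mu$ is an embedding onto a $\g_\Gamma$-invariant submanifold and invoke \cref{ex:EInvSubMMP}. You supply more detail than the paper does---in particular the explicit verification of the exactness criteria via transitivity of the gauge action---but the core idea is identical; one small point is that exactness of the Cartan Courant algebroid is stated in \cref{ex:CartCourSplit} rather than \cref{ex:CartCourAlg}.
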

\begin{proof}
The map $\mu$ is in this case an embedding. As explained in \cref{ex:EInvSubMMP}, this implies that there exists a unique exact morphism of Manin pairs
\end{proof}

\subsection{Sewing construction}\label{sec:Sew}
\begin{figure}
\begin{center}
\begin{subfigure}[t]{.3\linewidth}
\centering
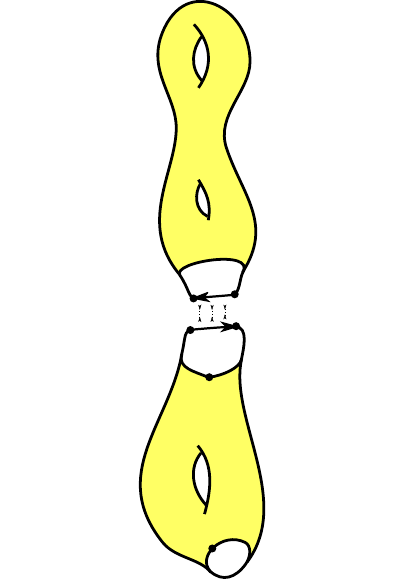
\caption{The initial surface.}
\label{fig:Sew1}
\end{subfigure}
\begin{subfigure}[t]{.3\linewidth}
\centering
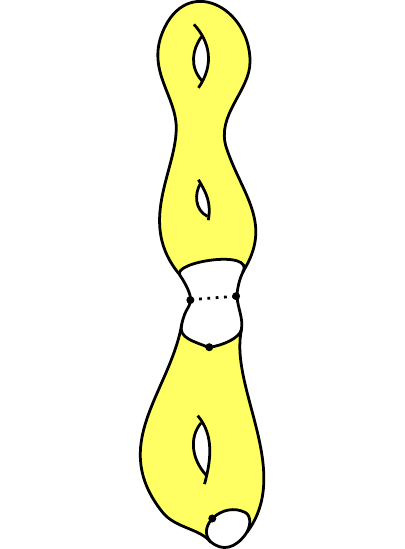
\caption{Sewing.}
\label{fig:Sew2}
\end{subfigure}
\begin{subfigure}[t]{.3\linewidth}
\centering
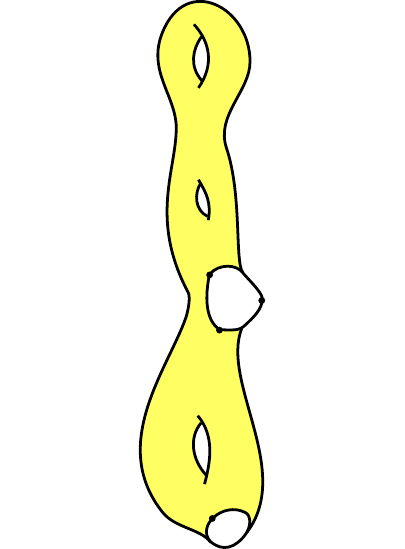
\caption{The resulting surface.}
\label{fig:Sew3}
\end{subfigure}
\end{center}
\caption{\label{fig:Sew}Sewing two edges from $\Sigma$. The two edges must have opposite orientation to ensure that the resulting surface is orientable.}
\end{figure}

Suppose $\Sigma$ is a (possibly disconnected) marked surface, and $e_1,e_2\in \Gamma$ are two distinct edges from the boundary graph, we may `sew' the  surface together along $e_1$ and $e_2$ to form a new surface 
$$\Sigma':=\frac{\Sigma}{e_1\sim e_2},$$
as pictured in \cref{fig:Sew}. In this section, we describe the analogous procedure for the corresponding morphisms of Manin pairs, \cref{eq:Rsigma}.


First, note that 
$$\mf{l}_{sew}:=\{\big((\xi,\eta);(\eta,\xi)\big)\mid\xi,\eta\in\g\}\subseteq (\ol{\g}\oplus\g)\bigoplus(\ol{\g}\oplus\g)$$
is a Lagrangian subalgebra. Since $G$ is connected, the $\mf{l}_{sew}$-orbit through the identity of $G\times G$ is $$G_\Delta^\natural:=\{(g,g^{-1})\mid g\in G\}.$$
\begin{definition}\label{def:Sewing}
Suppose that 
 $$\big((\ol{\g}\oplus\g)\bigoplus(\ol{\g}\oplus\g)\bigoplus\mf{d}'\big)\times \big(G\times G\times N\big)$$
 is the product of two Cartan Courant algebroids with an action Courant algebroid, $\mf{d}'\times N$, and that $\h\subseteq(\ol{\g}\oplus\g)\bigoplus(\ol{\g}\oplus\g)\bigoplus\mf{d}'$ is a Lagrangian subalgebra.
Further suppose that
\begin{equation}\label{eq:SewingIn}R:(\mbb T,T)M\dasharrow \big((\ol{\g}\oplus\g)\bigoplus(\ol{\g}\oplus\g)\bigoplus\mf{d}',\h\big)\times \big(G\times G\times N\big)\end{equation}
is a morphism of Manin pairs.

Let $\mf{c}_{sew}=\mf{l}_{sew}\oplus\mf{d}'$ and let 
$$S_{sew}=G_\Delta^\natural\times N\subseteq G\times G\times N.$$
If $(\mf{c}_{sew},S_{sew})$ is reduction data for \labelcref{eq:SewingIn}, then the  reduction,
$$R_{\mf{c}_{sew},S_{sew}}:(\mbb{T},T)M_{\mf{c}_{sew},S_{sew}}\dasharrow (\mf{d}',\h_{\mf{c}_{sew},S_{sew}})\times N$$
is called the \emph{sewing} of \labelcref{eq:SewingIn}.
\end{definition}

\begin{figure}
\begin{center}
\begin{subfigure}[t]{.3\linewidth}
\centering
 \def\svgwidth{1.5in}
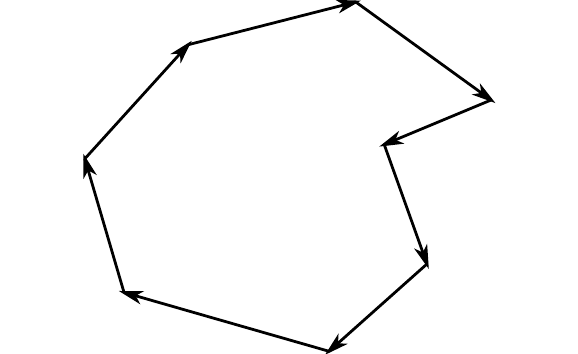
\caption{The graph $\Gamma'$, with chosen edges $e_1,e_2\in E_{\Gamma'}$.}
\label{fig:GraphEdId1}
\end{subfigure}
\begin{subfigure}[t]{.3\linewidth}
\centering
 \def\svgwidth{1.5in}
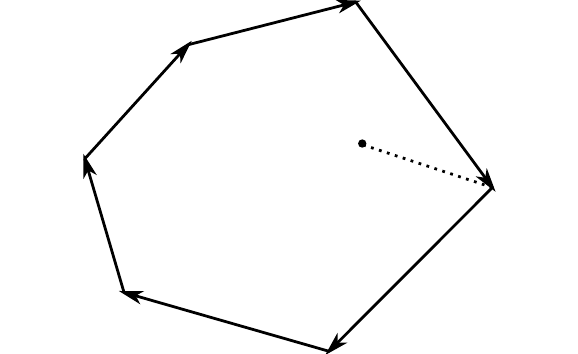
\caption{The graph with edges $e_1$ and $e_2$ identified with opposite orientation.}
\label{fig:GraphEdId2}
\end{subfigure}
\begin{subfigure}[t]{.3\linewidth}
\centering
 \def\svgwidth{1.5in}
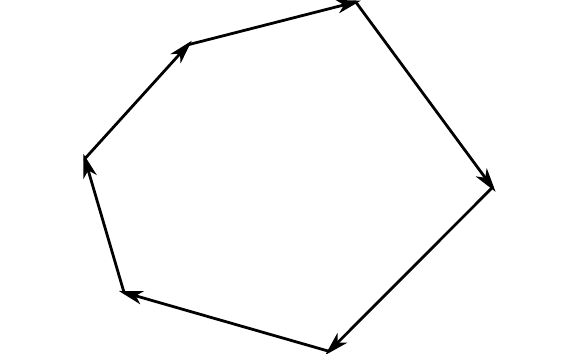
\caption{$\Gamma$ is the graph $\Gamma'$ with edges $e_1$ and $e_2$ identified, and the identified edges removed along with any isolated vertices.}
\label{fig:GraphEdId3}
\end{subfigure}
\end{center}
\caption{}
\end{figure}

In a typical example of sewing, $\Gamma'$ will be a permutation graph,  $e_1,e_2\subseteq E_{\Gamma'}$ will be two (distinct) edges, and 
$$R:(\mbb T,T)M\dasharrow \big((\ol{\g}\oplus\g)^{E_{\Gamma'}},\g_{\Gamma'}\big)\times G^{E_{\Gamma'}}$$
will be a morphism of Manin pairs. 


We let $\Gamma$ denote the graph with edge set $E_{\Gamma}:=E_{\Gamma'}\setminus \{e_1,e_2\}$ and vertex set 
$$V_{\Gamma}:=\frac{\on{out}(E_{\Gamma})}{\on{in}(e_1)\sim\on{out}(e_2)\text{ and }\on{in}(e_2)\sim\on{out}(e_1)},$$
as pictured in \cref{fig:GraphEdId3}.

Let \begin{align*}\mf{c}_{sew}^{e_1, e_2}&:=\{(\xi,\eta)\in(\ol{\g}\oplus\g)^{E_{\Gamma'}}\mid (\xi,\eta)_{\{e_1,e_2\}}\in\mf{l}_{sew}\}\\
S_{sew}^{e_1,e_2}&:=\{g\in G^{E_{\Gamma'}}\mid g_{\{e_1,e_2\}}\in G_\Delta^\natural\}
\end{align*}
as in \cref{def:Sewing}. Here we are using the notation $(\xi,\eta)_{\{e_1,e_2\}}:=\big((\xi_{e_1},\eta_{e_1});(\xi_{e_2},\eta_{e_2})\big),$ as described in \cref{sec:Notation}.
The morphism of Manin pairs
$$R_{\mf{c}_{sew}^{e_1,e_2},S_{sew}^{e_1,e_2}}:(\mbb{T},T)M_{\mf{c}_{sew}^{e_1,e_2},S_{sew}^{e_1,e_2}}\dasharrow ((\ol{\g}\oplus\g)^{E_{\Gamma}},\g_{\Gamma})\times G^{E_{\Gamma}}$$
is called the result of \emph{sewing edges $e_1$ and $e_2$ together}. Here we have used the following lemma to simplify the right hand side:

\begin{lemma}\label{lem:CorectDirStr}
The Lie subalgebras $\g_{\Gamma}\subseteq (\ol{\g}\oplus\g)^{E_{\Gamma}}$ and $(\g_{\Gamma'})_{\mf{c}_{sew}^{e_1,e_2}}\subseteq (\ol{\g}\oplus\g)^{E_{\Gamma'}}$ are equal.
\end{lemma}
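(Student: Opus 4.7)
The plan is to unpack the definition $(\g_{\Gamma'})_{\mf{c}_{sew}^{e_1,e_2}} = (\g_{\Gamma'} \cap \mf{c}_{sew}^{e_1,e_2})/(\g_{\Gamma'} \cap (\mf{c}_{sew}^{e_1,e_2})^\perp)$ from \cref{ex:CoisoAlgRed}, and verify directly that the resulting subalgebra of $\mf{d}_{\mf{c}_{sew}^{e_1,e_2}}$ is identified with $\g_\Gamma$ under the natural isomorphism of this quotient with $(\bar\g\oplus\g)^{E_\Gamma}$.

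First I would compute the orthogonal complement. Since $\mf{l}_{sew}$ is a Lagrangian subalgebra of $(\bar\g\oplus\g)_{e_1}\oplus(\bar\g\oplus\g)_{e_2}$, we have
$$
(\mf{c}_{sew}^{e_1,e_2})^\perp \;=\; \mf{l}_{sew}\oplus 0_{E_\Gamma},
$$
where the ``$0$'' is the zero subspace of $(\bar\g\oplus\g)^{E_\Gamma}$. It follows that the canonical projection onto the $E_\Gamma$-components identifies $\mf{d}_{\mf{c}_{sew}^{e_1,e_2}} = \mf{c}_{sew}^{e_1,e_2}/(\mf{c}_{sew}^{e_1,e_2})^\perp$ with $(\bar\g\oplus\g)^{E_\Gamma}$.

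Next I would describe $\g_{\Gamma'} \cap \mf{c}_{sew}^{e_1,e_2}$ explicitly. A general element of $\g_{\Gamma'}$ is the image of some $\xi \in \g^{V_{\Gamma'}}$ under $(\on{in}\oplus\on{out})^!$, namely $(\xi_{\on{in}(e)},\xi_{\on{out}(e)})_{e\in E_{\Gamma'}}$. The requirement that its $\{e_1,e_2\}$-components lie in $\mf{l}_{sew}$ becomes precisely
$$
\xi_{\on{out}(e_1)} = \xi_{\on{in}(e_2)},\qquad \xi_{\on{out}(e_2)} = \xi_{\on{in}(e_1)},
$$
which is exactly the pair of vertex identifications defining $V_\Gamma$ from $V_{\Gamma'}$. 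Hence $\g_{\Gamma'} \cap \mf{c}_{sew}^{e_1,e_2}$ is canonically the image of $\g^{V_\Gamma}\hookrightarrow\g^{V_{\Gamma'}}$ under $(\on{in}\oplus\on{out})^!$.

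Finally I would trace the projection: restricting $(\xi_{\on{in}(e)},\xi_{\on{out}(e)})_{e\in E_{\Gamma'}}$ to $e\in E_\Gamma$ gives $(\xi_{\on{in}(e)},\xi_{\on{out}(e)})_{e\in E_\Gamma}$, which is by definition the element of $\g_\Gamma\subseteq(\bar\g\oplus\g)^{E_\Gamma}$ associated with the descended function $V_\Gamma\to\g$. The kernel of this restriction map on $\g_{\Gamma'} \cap \mf{c}_{sew}^{e_1,e_2}$ is exactly $\g_{\Gamma'} \cap (\mf{c}_{sew}^{e_1,e_2})^\perp$, so the induced map on the quotient gives the required equality $(\g_{\Gamma'})_{\mf{c}_{sew}^{e_1,e_2}} = \g_\Gamma$. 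The argument is essentially bookkeeping and presents no real obstacle: the key observation is simply that the defining constraint for $\mf{l}_{sew}$ matches the gluing of vertices that produces $V_\Gamma$ from $V_{\Gamma'}$.
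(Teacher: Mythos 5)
Your proof is correct and follows essentially the same route as the paper's: unwind the definition of the reduced subalgebra, observe that intersecting $\g_{\Gamma'}$ with $\mf{c}_{sew}^{e_1,e_2}$ imposes exactly the vertex identifications defining $V_\Gamma$, and quotient by $\g_{\Gamma'}\cap(\mf{c}_{sew}^{e_1,e_2})^\perp$ to discard the $\{e_1,e_2\}$-components. You are somewhat more explicit than the paper (computing $(\mf{c}_{sew}^{e_1,e_2})^\perp=\mf{l}_{sew}\oplus 0$ and identifying the ambient quotient with $(\ol{\g}\oplus\g)^{E_\Gamma}$), but the argument is the same.
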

\begin{proof}
Let us recall that
$$\g_{\Gamma'}=(\on{in}\oplus\on{out})^!\g_\Delta^{V_{\Gamma'}}\cong \g^{V_{\Gamma'}}$$
and
$$(\g_{\Gamma'})_{\mf{c}_{sew}^{e_1,e_2}}=\g_{\Gamma'}\cap\mf c_{sew}^{e_1,e_2}/\g_{\Gamma'}\cap(\mf c_{sew}^{e_1,e_2})^\perp.$$
By definition of $\mf{c}_{sew}^{e_1,e_2}$,   $\g^{V_{\Gamma'}}\cap\mf c_{sew}^{e_1,e_2}$ is the subalgebra of $\g^{V_{\Gamma'}}$ where the components corresponding to identified vertices are equal. After we divide by $\g_{\Gamma'}\cap(\mf c_{sew}^{e_1,e_2})^\perp$ we obtain the Lie algebra $\g_{\Gamma}=\g^{V_{\Gamma}}$.
\end{proof}

%

The morphism of Manin pairs \labelcref{eq:Rsigma} which we assign to the surface $\Sigma$ will satisfy the following sewing property:

\emph{Sewing property.}
Let $(\Sigma,V)$ be obtained out of $(\Sigma',V')$ by sewing edges $e_1$ and $e_2$.
There is a canonical isomorphism 
\begin{equation}\label{eq:sewM}
\mc{M}_{\Sigma,V}\xrightarrow{\cong}(\mc{M}_{\Sigma',V'})_{\mf{c}_{sew}^{e_1,e_2},S_{sew}^{e_1,e_2}},
\end{equation}
 which identifies the following two morphisms of Manin pairs:
$$R_{\Sigma,V}:(\mbb{T},T)\mc{M}_{\Sigma,V}\dasharrow ((\ol{\g}\oplus\g)^{E_{\Gamma}},\g_{\Gamma})\times G^{E_{\Gamma}},$$
and
$$(R_{\Sigma',V'})_{\mf{c}_{sew}^{e_1,e_2},S_{sew}^{e_1,e_2}}:(\mbb{T},T)\mc{M}_{\Sigma,V}\dasharrow ((\ol{\g}\oplus\g)^{E_{\Gamma}},\g_{\Gamma})\times G^{E_{\Gamma}}.$$

Notice that
$$\mu^{-1}(S_{sew}^{e_1,e_2})=\{f\in\mc M_{\Sigma',V'}(G)\mid f(e_1)f(e_2)=1\}$$
and that
$$\mu^{-1}(S_{sew}^{e_1,e_2})/\g^{V'}\cap(\mf{c}_{sew}^{e_1,e_2})^\perp=\mc{M}_{\Sigma,V}(G).$$
Thus, the isomorphism \ref{eq:sewM} is clear. The non-trivial part of the statement is the behaviour of the $R_{\Sigma,V}$'s

\subsubsection{Commutativity of sewing}\label{sec:SewComm}
Suppose $\{e_1,e_2\},\{e_1',e_2'\}\subseteq E_\Gamma$ are two distinct pairs of distinct edges (i.e. $\{e_1,e_2\}\cap\{e_1',e_2'\}=\emptyset$), and let $i:\{e_1,e_2\}\to E_\Gamma$ and $i':\{e_1',e_2'\}\to E_\Gamma$ denote the inclusions. We may form the graph $\Gamma''$, with edge set $E_{\Gamma''}:=E_\Gamma\setminus \{e_1,e_2,e_1',e_2'\}$ and vertex set 
$$V_{\Gamma''}:=\frac{\on{out}(E_{\Gamma''})}{\on{in}(e_1)\sim\on{out}(e_2),\on{in}(e_2)\sim\on{out}(e_1),\text{ and }\on{in}(e_1')\sim\on{out}(e_2'),\on{in}(e_2')\sim\on{out}(e_1')}.$$
It is the graph obtained from $\Gamma$ by identifying the oppositely directed edges $e_1\sim e_2$ and $e_1'\sim e_2'$, and then deleting these newly identified edges and any isolated vertices.

 Let
$$\begin{array}{rlcccccc}
\mf{c}_1&:=\mf{c}_{sew}^{e_1,e_2}&=&i(\mf{l}_{sew})&\oplus&(\ol{\g}\oplus\g)^{\{e_1',e_2'\}}&\oplus& (\ol{\g}\oplus\g)^{E_{\Gamma''}},\\
\mf{c}_2&:=\mf{c}_{sew}^{e_1',e_2'}&=&(\ol{\g}\oplus\g)^{\{e_1,e_2\}}&\oplus& i'(\mf{l}_{sew})&\oplus& (\ol{\g}\oplus\g)^{E_{\Gamma''}},\\
\mf{c}_{1,2}&:=\mf{c}_1\cap\mf{c}_2&=&i(\mf{l}_{sew})&\oplus& i'(\mf{l}_{sew})&\oplus& (\ol{\g}\oplus\g)^{E_{\Gamma''}},
\end{array}$$
and 
$$\begin{array}{rlcccccc}
S_1&:=S_{sew}^{e_1,e_2}&=&i(G^\natural_\Delta)&\times& G^{\{e_1',e_2'\}}&\times& G^{E_{\Gamma''}},\\
S_2&:=S_{sew}^{e_1',e_2'}&=&G^{\{e_1,e_2\}}&\times& i'(G^\natural_\Delta)&\times& G^{E_{\Gamma''}},\\
S_{1,2}&:=S_1\cap S_2&=&i(G^\natural_\Delta)&\times& i'(G^\natural_\Delta)&\times& G^{E_{\Gamma''}},
\end{array}$$
If the pairs $(\mf{c}_{sew}^{e_1,e_2},S_{sew}^{e_1,e_2})$, $(\mf{c}_{sew}^{e_1',e_2'},S_{sew}^{e_1',e_2'})$ and $(\mf{c}_1\cap\mf{c}_2,S_1\cap S_2)$ all form reductive data for $R$, then the assumptions of \cref{cor:ComPartRed} are satisfied. Therefore, 
$$\big(R_{\mf{c}_{sew}^{e_1,e_2},S_{sew}^{e_1,e_2}}\big)_{\mf{c}_{sew}^{e_1',e_2'},S_{sew}^{e_1',e_2'}}=R_{\mf{c}_{1,2},S_{1,2}}=\big(R_{\mf{c}_{sew}^{e_1',e_2'},S_{sew}^{e_1',e_2'}}\big)_{\mf{c}_{sew}^{e_1,e_2},S_{sew}^{e_1,e_2}}$$
as morphisms of Manin pairs
$$(\mbb{T},T)M''\dasharrow ((\ol{\g}\oplus\g)^{E_{\Gamma''}},\g_{\Gamma''})\times G^{E_{\Gamma''}},$$
 where $M''=M_{\mf{c}_{1,2},S_{1,2}}$.
That is to say, it makes no difference in which order we sew pairs of edges: the results are all naturally isomorphic.

\subsection{The quasi-Hamiltonian structure on $\mc M_{\Sigma,V}(G)$}\label{sec:RSigViaTriang}

\begin{theorem}\label{thm:Independence}
There is a unique way to assign to every marked surface $(\Sigma,V)$  an exact morphism of Manin pairs
$$R_\Sigma:(\mbb{T},T)\mc{M}_{\Sigma,V}\dasharrow  \big((\ol{\g}\oplus\g)^{E_{\Gamma}},\g_{\Gamma}\big)\times G^{E_{\Gamma}}$$
supported on the graph of $\mu:\mc{M}_{\Sigma,V}\to G^{E_\Gamma}$ such that the assignment satisfies the sewing property.
\end{theorem}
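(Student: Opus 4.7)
My plan is to reduce everything to the polygon case of Proposition \ref{prop:unPolyg} and propagate via the sewing operation of \cref{sec:Sew}. For any marked surface $(\Sigma,V)$, choose a triangulation $\mc T$ with $\mc T_0 \cap \partial\Sigma = V$, and let $(\Sigma_0, V_0)$ be the disjoint union of the triangles of $\mc T$ viewed as marked discs. Then $(\Sigma,V)$ is recovered from $(\Sigma_0,V_0)$ by sewing together each pair of boundary edges identified in the interior of $\Sigma$. Iterating the sewing property forces
$$R_{\Sigma,V} \;=\; (R_{\Sigma_0,V_0})_{\mf{c}_{sew},\,S_{sew}},$$
where the iterated reduction is unambiguous by the commutativity of sewing (\cref{sec:SewComm}), and where $R_{\Sigma_0,V_0}$ is uniquely determined by Proposition \ref{prop:unPolyg}. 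This establishes \emph{uniqueness}.

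For \emph{existence}, I take this same formula as the \emph{definition} of $R_{\Sigma,V}$ for a chosen triangulation $\mc T$. The sewing property of the resulting assignment then follows from the commutativity of sewing: if $(\Sigma,V)$ is obtained from $(\Sigma',V')$ by sewing $e_1 \sim e_2$, pick a triangulation of $\Sigma'$ which descends to a triangulation of $\Sigma$, so that the two iterated reductions in question differ only by the order in which elementary sewings are performed. What remains is to verify that the definition is independent of the choice of triangulation.

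This independence is the main obstacle. I would invoke the classical fact that any two triangulations of $(\Sigma,V)$ (with vertices on $\partial\Sigma$ equal to $V$) can be connected by a finite sequence of local moves -- namely, diagonal flips of a quadrilateral and insertion/removal of an interior vertex. By commutativity of sewing (\cref{cor:ComPartRed}), triangulation-independence reduces to two local statements: (a) the morphism assigned by Proposition \ref{prop:unPolyg} to a quadrilateral agrees with the sewing of two triangle morphisms along either diagonal; and (b) inserting an interior vertex on an edge and then re-sewing the two resulting pieces recovers the original morphism. Both sides in each case are exact morphisms of Manin pairs over the same underlying map $\mu$, so by \cref{thm:PartRedSplEx} (in the symmetric form of \cref{rem:SymmetricOrbits}) each is determined by a single 2-form on the common underlying manifold. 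Verifying (a) amounts to checking the formula \eqref{eq:omegaFromTriangIntro} expressing the quadrilateral's 2-form as a sum of the triangle 2-forms $\omega_t$; verifying (b) is a short calculation using that the digon carries the trivial 2-form (\cref{ex:PoisP2}) and that the residual gauge action at the inserted vertex produces no new contribution. These are the explicit computations I expect to be the only real work.
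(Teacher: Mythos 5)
Your overall strategy is the same as the paper's: build $R_{\Sigma,V}$ by sewing the triangles of a triangulation, get uniqueness and the sewing property from \cref{prop:unPolyg} together with the commutativity of sewing, and reduce triangulation-independence to local moves. The one place where you diverge is the last step, and there you are doing unnecessary work. In each of your local checks (a) and (b), the two morphisms being compared are both (exact) morphisms of Manin pairs supported on the graph of the boundary-holonomy map of a polygon --- a quadrilateral or a triangle --- and for a polygon $\mu$ is an embedding; by \cref{prop:unPolyg} (more precisely, the remark in \cref{ex:EInvSubMMP}) there is exactly one morphism of Manin pairs supported on the graph of an embedding. Hence the two sides coincide automatically, with no need to extract and compare 2-forms via \cref{thm:PartRedSplEx} and \cref{rem:SymmetricOrbits}, and no need to verify \eqref{eq:omegaFromTriangIntro} at this stage: that formula is a consequence of the construction, not an input to it. This is precisely how the paper disposes of the Pachner moves --- each move is a change of triangulation of a polygon, so invariance is immediate from \cref{prop:unPolyg}. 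The ``only real work'' you identify is therefore no work at all, and your proof closes once the computational step is replaced by this uniqueness argument. One small caution: make sure the set of local moves you invoke (diagonal flips plus insertion/removal of an interior vertex) really does connect any two triangulations with the prescribed boundary vertex set $V$; the paper's appeal to Pachner moves serves exactly this purpose, and whichever generating set you use, each move must stay supported inside a single polygon for the uniqueness argument to apply.
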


\begin{figure}\label{fig:Pachner}
\begin{center}
\begin{subfigure}[t]{.45\linewidth}
\centering
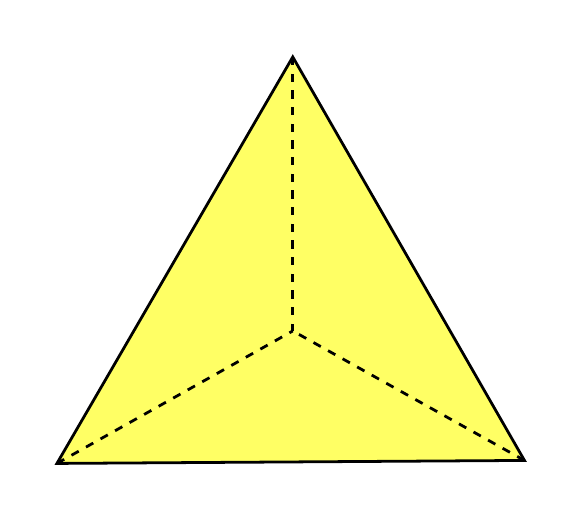
\caption{A triangulation of the $3$-gon.}
\end{subfigure}
\hspace{.05\linewidth}
\begin{subfigure}[t]{.45\linewidth}
\centering
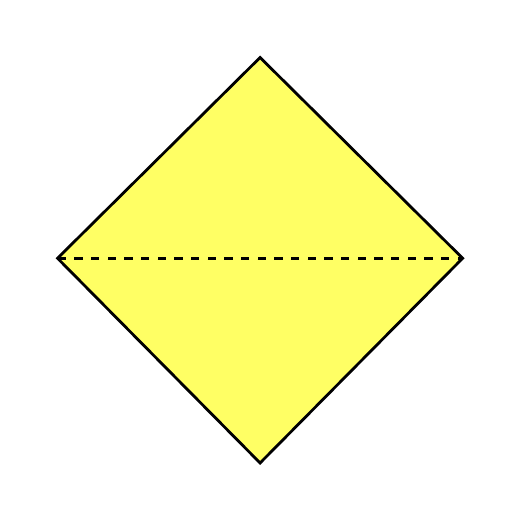
\caption{A Triangulation of the $4$-gon.}
\end{subfigure}
\caption{\label{fig:PachnerMoves}Triangulations of $3$ and $4$-gons. The dashed lines indicate internal edges of the triangulation along which we sew.}
\end{center}
\end{figure}

\begin{proof}
If $\Sigma'$ is a disjoint union of disks then $R_{\Sigma'}$ exists and is unique by \cref{prop:unPolyg}. 

Suppose that $\Sigma$ is triangulated and $V=\mc T^0\cap\partial\Sigma$. Let $\Sigma'$ be the disjoint union of the triangles and $V'$ the set of its vertices. Then, by sewing the respective edges of $\mc M_{\Sigma',V'}(G)$ according to the triangulation, we get an exact morphism of Manin pairs
$$R_{\mc T}:(\mbb{T},T)\mc{M}_{\Sigma,V}\dasharrow  \big((\ol{\g}\oplus\g)^{E_{\Gamma}},\g_{\Gamma}\big)\times G^{E_{\Gamma}}.$$
By commutativity of sewing these morphisms satisfy the sewing property. It remains to show that they are independent of the triangulation $\mc T$.

Thus, we need to prove that $R_\mc T$ is invariant under Pachner moves applied to $\mc T$. Pachner moves are, however, simply changes of triangulations of a polygon (either a triangle or a square, see \cref{fig:Pachner}). The independence thus follows from \cref{prop:unPolyg}.
\end{proof}

\section{Poisson structures on the moduli space of flat connections}\label{sec:PoisStrDirRed}

Suppose $G$ is a (connected) Lie group
with Lie algebra $\g$ and $s\in S^2(\g)^G$ is a $G$-invariant symmetric 2-tensor.
 We let $\mf{d}$ denote the Drinfel'd double of $\g$, as in \cref{rem:quasiTriStr}.
Suppose now that $\Gamma$ is a permutation graph.
Then $\mf{d}^{V_\Gamma}$ acts on $G^{E_\Gamma}$ with coisotropic stabilizers, as follows:  $\mf{d}^{V_\Gamma}$ acts on the $e\in E_\Gamma$-th factor $G^{E_\Gamma}$ via the vector field
$$\rho(\xi)_e= -\on{s}(\xi_{\on{in}(e)})^R+\on{t}(\xi_{\on{out}(e)})^L,\quad \xi\in \mf{d}^{V_\Gamma},$$
(cf. \cref{fig:3Cyclb1}).
Thus $$\g^{V_\Gamma}\times G^{E_\Gamma}\subseteq \mf{d}^{V_\Gamma}\times G^{E_\Gamma}$$ 
is a Dirac structure.

\begin{figure}[!h]
\begin{center}
\begin{subfigure}[t]{.45\linewidth}
\centering
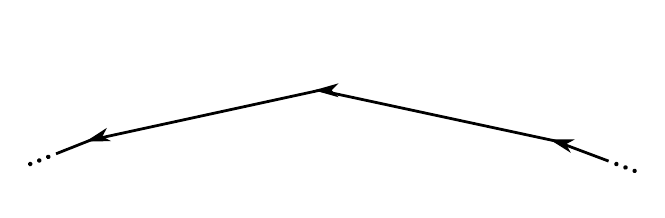
\caption{Here $\on{in}(e)=v=\on{out}(e')$, where $v\in V_\Gamma$ is the vertex and $e,e'\in E_\Gamma$ are edges.}
\label{fig:3Cycla1}
\end{subfigure}
\hspace{.05\linewidth}
\begin{subfigure}[t]{.45\linewidth}
\centering
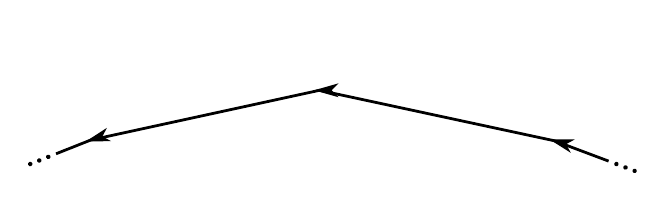
\caption{The element $\xi_{(\cdot)}\in \mf{d}^{V_\Gamma}$ acts diagonally by $\xi_v$ at the vertex $v$. }
\label{fig:3Cyclb1}
\end{subfigure}
\caption{}
\end{center}
\end{figure}

Suppose $(\Sigma, V)$ is a marked surface (where we now allow components of $\partial\Sigma$ to intersect $V$ trivially), with boundary graph $\Gamma$. In this section, we prove there exists a natural quasi-Hamiltonian $(\mf{d},\g)^{V_\Gamma}\times G^{E_\Gamma}$ structure on $\mc{M}_{\Sigma,V}(G)$, i.e a morphism of Manin pairs
$$R_{\Sigma,V}:(\mbb{T},T)\mc{M}_{\Sigma,V}(G)\dasharrow (\mf{d},\g)^{V_\Gamma}\times G^{E_\Gamma}$$
over the map $$\mu:\mc{M}_{\Sigma,V}(G)\to G^{E_\Gamma}$$ given by the boundary holonomies.

As before, for disjoint unions of polygons the quasi-Hamiltonian structure is uniquely defined.
\begin{proposition}[union of polygons]\label{prop:unPolygP}
Let $\Sigma$ be a disjoint union of discs and $V\subset\partial\Sigma$ a finite subset. Then there is a unique exact morphism of Manin pairs
$$(\mathbb T,T)\mathcal M_{\Sigma,V}\dasharrow  \big((\ol{\g}\oplus\g)^{E_{\Gamma}},\g_{\Gamma}\big)\times G^{E_{\Gamma}}$$
over $\mu$.
\end{proposition}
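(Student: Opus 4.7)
The plan is to proceed exactly as in the proof of Proposition~\ref{prop:unPolyg}. The key observation is that when $\Sigma$ is a disjoint union of discs, the holonomy map $\mu\colon\mc{M}_{\Sigma,V}\to G^{E_\Gamma}$ is an embedding: each disc component with $n\ge 1$ marked points contributes the closed submanifold $\{g_{e_n}\cdots g_{e_1}=1\}\subset G^n$, while each boundary circle with no marked points (allowed in this Poisson setting but not in Section~\ref{sec:SymplStrMMP}) contributes only a trivial factor, since $\Pi_1(\Sigma,V)$ sees no loops without a base point. So $\mu$ identifies $\mc{M}_{\Sigma,V}$ with a closed submanifold $M\subseteq G^{E_\Gamma}$, and Example~\ref{ex:EInvSubMMP} supplies a unique morphism of Manin pairs $R_{\Sigma,V}=R_{E,M}$ supported on $\gr(\mu)$, provided that $M$ is invariant under the Dirac structure.

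I would verify invariance directly: the infinitesimal gauge action at a vertex $v$ by $\xi\in\g$ modifies each adjacent edge holonomy $g_e$ by one-sided translation using $\on{s}(\xi)$ or $\on{t}(\xi)$, and since $\on{s}|_\g=\on{t}|_\g=\on{id}_\g$ these deformations telescope inside the cyclic product around any boundary circle, preserving the defining relations of $M$. For exactness I would then invoke Lemma~\ref{lem:ExDirStr}, which requires (a) that the target action Courant algebroid be exact along $M$ and (b) that the anchor restricted to the Dirac structure surject onto $TM$. Condition (a) reduces to showing that the $\mf{d}$-action on each factor $G$ has open orbits; this follows because $\on{s},\on{t}\colon\mf{d}\to\g$ are both surjective, so already $\on{s}(\mf{d})^R$ (or $\on{t}(\mf{d})^L$) alone spans $T_gG$. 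For (b), I would argue disc by disc: a disc with $n\ge 1$ marked points contributes $\dim M=(n-1)\dim\g$, while the stabilizer in $\g^n$ at any point of $M$ is the diagonal copy of $\g$ (since the boundary holonomy $g_{e_n}\cdots g_{e_1}=1$ imposes no further constraint on the lone free parameter $h_{v_0}$), so the orbit has the correct dimension $n\dim\g-\dim\g=\dim M$.

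The main---essentially only---obstacle is verifying (a), exactness of the action Courant algebroid along $M$. This issue did not arise in Proposition~\ref{prop:unPolyg}, where the Cartan Courant algebroid is manifestly exact, but here it requires the surjectivity of the source and target maps of the groupoid structure on $\mf{d}$. Once (a) is dispatched, uniqueness and the remaining claims are formal consequences of Example~\ref{ex:EInvSubMMP} and Lemma~\ref{lem:ExDirStr}.
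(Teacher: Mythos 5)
Your overall route is the paper's: $\mu$ is an embedding, so \cref{ex:EInvSubMMP} produces the morphism together with its uniqueness, and the paper's proof is exactly this one line. Your observations that unmarked boundary circles contribute trivially, that $M=\mu(\mc M_{\Sigma,V})$ is invariant under the Dirac structure (the telescoping of the gauge action around each boundary cycle), and that $\mbf{a}(E\rvert_M)=TM$ are all correct.

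The gap is in your step (a). You claim that exactness of the action Courant algebroid along $M$ ``reduces to showing that the $\mf{d}$-action on each factor $G$ has open orbits,'' and that this follows since $\on{s}$ is surjective, ``so already $\on{s}(\mf{d})^R$ alone spans $T_gG$.'' This reasoning fails: $\mf{d}^{V_\Gamma}\times G^{E_\Gamma}$ is not a product of Courant algebroids over the individual $G$-factors, because the copy of $\mf{d}$ sitting at a vertex $v$ acts \emph{simultaneously} on the edge entering $v$ (through $-\on{s}(\cdot)^R$) and on the edge leaving $v$ (through $\on{t}(\cdot)^L$), so the contributions to adjacent edges cannot be varied independently. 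Concretely, for a disc with a single marked point the boundary graph is a loop, $M=\{1\}$, and the anchor image at the identity is $\{-\on{s}(\xi)+\on{t}(\xi)\mid\xi\in\mf{d}\}=\{s(\alpha,\cdot)\mid\alpha\in\mf{p}\}$, a proper subspace of $T_1G$ whenever $s$ is degenerate. So in the general quasi-triangular quasi-bialgebra setting of this section the action Courant algebroid is genuinely \emph{not} exact along $M$, and by \cref{lem:ExDirStr} no exact morphism over $\mu$ into that target can exist. If one reads the target literally as the twisted Cartan Courant algebroid $(\ol{\g}\oplus\g)^{E_\Gamma}\times G^{E_\Gamma}$ (quadratic $\g$), your condition (a) is immediate because that Courant algebroid is exact everywhere (\cref{ex:CartCourSplit}), and the point you flag as the ``main obstacle'' evaporates. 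In the degenerate case the honest content of the proposition --- and the only thing \cref{thm:qPoiss-mod} actually uses --- is the uniqueness of the morphism of Manin pairs over the embedding $\mu$, which follows from the remark in \cref{ex:EInvSubMMP} with no exactness input whatsoever.
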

\begin{proof}
As in \cref{prop:unPolyg}, $\mu$ is an embedding, so this follows from \cref{ex:EInvSubMMP}.
\end{proof}

\subsection{Fusion}\label{sec:fus}

Suppose that $P,Q\in V_\Gamma$ are two distinct vertices. The operation of \emph{fusion} at the ordered pair of vertices $(P,Q)$ described in  \cite{LiBland:2012vo} (following \cite{Alekseev97,Alekseev00}), 
can be understood in terms of a morphism of Manin pairs 
$$R_{(P,Q)}:(\mf{d},\g)^{V_\Gamma}\times G^{E_\Gamma}\dasharrow (\mf{d},\g)^{V_{\Gamma^*}}\times G^{E_{\Gamma^*}}$$
where the graph $\Gamma^*$ is a permutation graph constructed from $\Gamma$, as we shall now explain:

Let ${}_{\overleftarrow{P}}e=\on{in}^{-1}(P)$ and $e_{\overleftarrow{P}}:=\on{out}^{-1}(P)$ denote the edges entering and exiting $P$. Similarly, let ${}_{\overleftarrow{Q}}e=\on{in}^{-1}(Q)$ and $e_{\overleftarrow{Q}}:=\on{out}^{-1}(Q)$ denote the edges entering and exiting $Q$.
\begin{description}
\item[Case 1: ${}_{\overleftarrow{Q}}e=e_{\overleftarrow{P}}$]
In this case, $\Gamma^*$ is obtained by discarding the edge ${}_{\overleftarrow{Q}}e=e_{\overleftarrow{P}}$ and identifying the vertices $P$ and $Q$  (cf. \cref{fig:GraphFusDel}).

Meanwhile for $(\xi,g)\in\mf{d}^{V_\Gamma}\times G^{E_\Gamma}$ and $(\xi^*,g^*)\in\mf{d}^{V_{\Gamma^*}}\times G^{E_{\Gamma^*}}$
$$(\xi,g)\sim_{R_{(P,Q)}}(\xi^*,g^*)$$
if and only if $g^*_e=g_e$ for every $e\in E_{\Gamma^*}$ and
\begin{equation}\label{eq:FusMMPond}
\xi^*_v=\begin{cases}
\xi_{Q}\circ \xi_{P} &\text{if }v\text{ is the vertex obtained by identifying }P\text{ and } Q\\
\xi_v&\text{otherwise,}
\end{cases}\end{equation}
(in particular, we assume that $\xi_{Q}$ and $\xi_{P}$ are composable elements of the Lie groupoid $\mf{d}$).
\item[Case 2: ${}_{\overleftarrow{Q}}e\neq e_{\overleftarrow{P}}$]
In this case, $\Gamma^*$ is obtained by identifying the vertices $P$ and $Q$ and composing the edges  $e_{\overleftarrow{P}}$ and ${}_{\overleftarrow{Q}}e$ to form a new edge $e_{\widehat{PQ}}$ (cf. \cref{fig:GraphFusComp}).

Meanwhile for $(\xi,g)\in\mf{d}^{V_\Gamma}\times G^{E_\Gamma}$ and $(\xi^*,g^*)\in\mf{d}^{V_{\Gamma^*}}\times G^{E_{\Gamma^*}}$
$$(\xi,g)\sim_{R_{(P,Q)}}(\xi^*,g^*)$$
if and only if
$$g^*_e=\begin{cases}
g_{e_{\overleftarrow{P}}}g_{{}_{\overleftarrow{Q}}e}&\text{if }e=e_{\widehat{PQ}}\\
g_e&\text{otherwise},
\end{cases}$$
while $\xi^*$ and $\xi$ satisfy \cref{eq:FusMMPond}, as before.
\end{description}

\begin{figure}[h]
\begin{center}
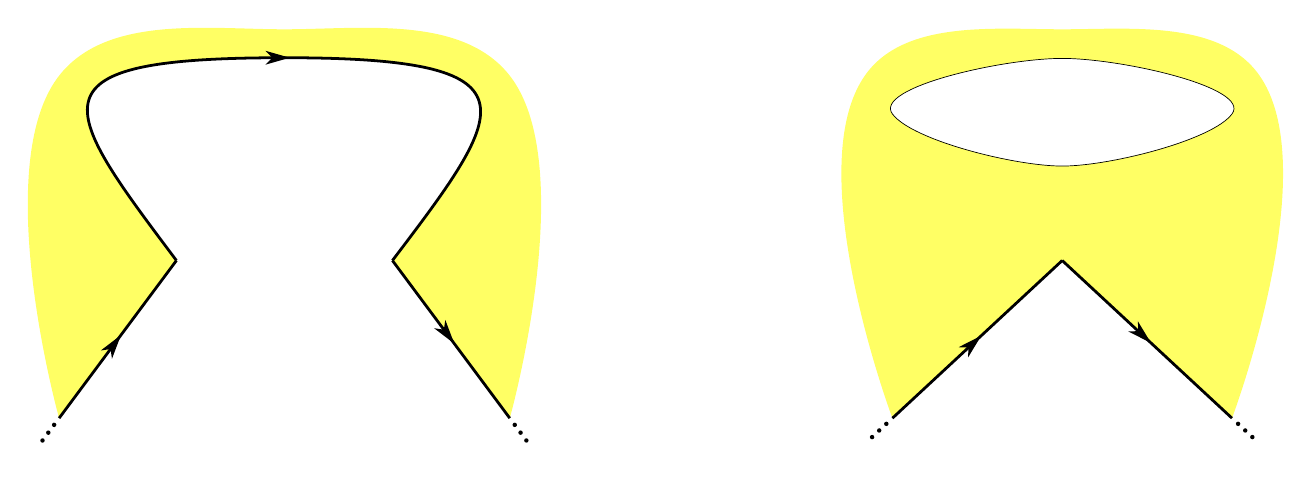
\caption{\label{fig:GraphFusDel}The graph $\Gamma^*$ is the permutation graph obtained from $\Gamma$ by deleting the edge, ${}_{\overset{\leftarrow}{Q}}e_{\overset{\leftarrow}{P}}$, passing from $P$ to $Q$, and identifying the vertices $P$ and $Q$. (The graphs are isomorphic outside the pictured regions.) Heuristically, we have obtained the graph $\Gamma^*$ by gluing short sections from both ends of ${}_{\overset{\leftarrow}{Q}}e_{\overset{\leftarrow}{P}}$ together (effectively divorcing it from the graph).
\\
Meanwhile, the Courant morphism, $R_{(P,Q)}$, is defined by composing the corresponding elements labelling the vertices and by forgetting the element which labelled the deleted edge. }
\end{center}
\end{figure}

\begin{figure}[h]
\begin{center}
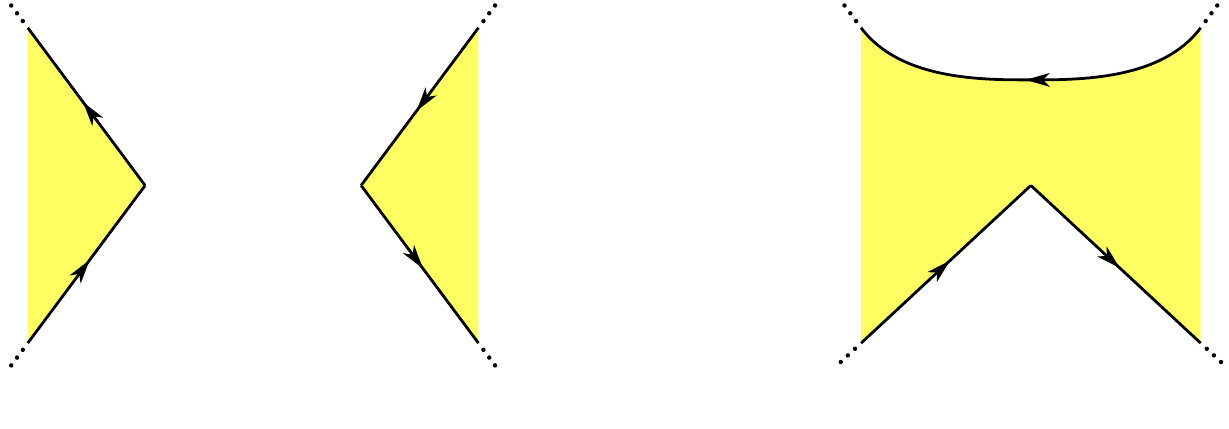
\caption{\label{fig:GraphFusComp}The graph $\Gamma^*$ is the permutation graph obtained from $\Gamma$ by identifying the vertices $P$ and $Q$ and composing the edge entering $Q$ with the edge leaving $P$. (The graphs are isomorphic outside the pictured regions.) Heuristically, we have obtained the graph $\Gamma^*$ by gluing a short section of the edges $e_{\overset{\leftarrow}{P}}$ and ${}_{\overset{\leftarrow}{Q}}e$ together. \\ Meanwhile, the Courant morphism, $R_{(P,Q)}$, is defined by composing the corresponding elements labelling the edges and vertices. }
\end{center}
\end{figure}

Now suppose that $M$ is a quasi-Hamiltonian $(\mf{d},\g)^{V_\Gamma}\times G^{E_\Gamma}$-space defined by the morphism of Manin pairs
$$R:(\mbb{T},T)M\dasharrow(\mf{d},\g)^{V_\Gamma}\times G^{E_\Gamma}.$$
Then the morphism of Manin pairs
$$R^*:=R_{(P,Q)}\circ R:(\mbb{T},T)M\dasharrow(\mf{d},\g)^{V_{\Gamma^*}}\times G^{E_{\Gamma^*}}$$
defines a quasi-Hamiltonian $(\mf{d},\g)^{V_{\Gamma^*}}\times G^{E_{\Gamma^*}}$-structure on $M$ which we call the \emph{fusion} of $R$ at the ordered pair $(P,Q)$ of vertices.

%

\begin{remark}[Associativity of Fusion]
Since the Courant morphism $R_{(P,Q)}$ is defined in terms of the groupoid structure on $\mf{d}$ and the group structure on $G$, it follows that fusion is an associative operation.
\end{remark}

\subsection{The quasi-Hamiltonian structure for quasi-triangular structure Lie algebras}\label{sec:qPoisModSpc}
Let $(\Sigma,V)$ be a marked surface.
If we choose an ordered pair $(P,Q)$ of marked points ($P\neq Q\in V$) then the corresponding \emph{fused surface} $\Sigma^*$ is obtained by gluing a short piece of the arc starting at $P$ with a short piece of the arc ending at $Q$ (so that $P$ and $Q$ get identified).
The subset $V^*\subset\partial\Sigma^*$ is obtained from $V$ by identifying $P$ and $Q$. The map 
$$M_{\Sigma^*,V^*}(G)\to M_{\Sigma,V}(G),$$
 coming from the map $(\Sigma,V)\to(\Sigma^*,V^*)$, is a diffeomorphism.

\begin{theorem}\label{thm:qPoiss-mod}
There is unique way to assign to every marked surface $(\Sigma,V)$ a morphism of Manin pairs
$$R_{\Sigma,V}:(\mbb{T},T)\mc{M}_{\Sigma,V}(G)\dasharrow (\mf{d},\g)^{V_\Gamma}\times G^{E_\Gamma}$$
supported on the graph of $\mu:\mc{M}_{\Sigma,V}(G)\to G^{E_\Gamma}$ such that if $(\Sigma^*,V^*)$ is obtained from $(\Sigma,V)$ by fusion, then $R_{\Sigma^*,V^*}$ is obtained from $R_{\Sigma,V}$ by the corresponding fusion.
\end{theorem}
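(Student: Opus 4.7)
The plan is to adapt the strategy of \cref{thm:Independence} from the sewing setting to the fusion setting. First, for uniqueness: any marked surface $(\Sigma,V)$ can be obtained from a disjoint union of polygons $(\Sigma',V')$ by a finite sequence of fusion operations applied at ordered pairs of marked vertices, and \cref{prop:unPolygP} uniquely determines $R_{\Sigma',V'}$ over $\mu$ for the polygon case. Combined with the required fusion compatibility, this forces $R_{\Sigma,V}$ to be uniquely determined for every marked surface.

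For existence I would proceed by triangulation. Choose a triangulation $\mc{T}$ of $\Sigma$ with $V = \mc{T}_0 \cap \partial\Sigma$, let $(\Sigma',V')$ be the disjoint union of its triangles (with $V'$ the collection of all their vertices), and define
\[
R_\mc{T} := R_{(P_k,Q_k)}\circ\cdots\circ R_{(P_1,Q_1)}\circ R_{\Sigma',V'},
\]
where the pairs $(P_i,Q_i)$ run over the sequence of vertex identifications forced by the gluing pattern of $\mc{T}$. Since the induced map on moduli spaces is the canonical diffeomorphism $\mc{M}_{\Sigma^*,V^*}(G) \to \mc{M}_{\Sigma,V}(G)$ described immediately before the theorem statement, the source of $R_\mc{T}$ is $\mc{M}_{\Sigma,V}(G)$ as required. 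By the associativity of fusion (and the fact that fusions performed at disjoint pairs of vertices commute), $R_\mc{T}$ depends only on $\mc{T}$ and not on the order in which the individual fusions are performed; the fusion compatibility of the assignment then follows from the same associativity.

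The principal remaining step is to show that $R_\mc{T}$ is independent of the choice of triangulation $\mc{T}$. Any two triangulations of $(\Sigma,V)$ with the same boundary-vertex set are related by a finite sequence of Pachner moves, namely the $2$-$2$ diagonal flip inside a quadrilateral sub-region and stellar subdivisions of triangles. Each such move is localized in a polygonal sub-region, and the two sides of the move produce morphisms of Manin pairs from $\mc{M}_{P,V_P}(G)$ of the relevant polygon $P$, both supported over the same boundary-holonomy map $\mu$. \cref{prop:unPolygP} then forces these two local morphisms to coincide, so $R_\mc{T}$ is invariant under Pachner moves and hence independent of $\mc{T}$.

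The main obstacle I anticipate is the bookkeeping for triangulations with interior vertices that do not lie in $V$, which are unavoidable for surfaces of positive genus or with many boundary components. These interior vertices must be introduced to get a proper triangulation and then fused together during the construction. I would need to verify carefully that the fusion Courant morphisms $R_{(P,Q)}$ at pairs involving an interior vertex implement the correct identification in the moduli space, and that all compositions involved are clean so that the composed relation remains a morphism of Manin pairs. Once this is checked (using the action Courant algebroid structure of $\mf{d}^{V_\Gamma}\times G^{E_\Gamma}$ and the transitivity of the $\g$-action at interior vertices), the Pachner invariance argument reduces to the polygonal case already covered by \cref{prop:unPolygP}, and existence follows; together with uniqueness this completes the proof.
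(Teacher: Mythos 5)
There is a genuine gap in your existence argument: you have conflated \emph{fusion} with \emph{sewing}. Fusion at an ordered pair $(P,Q)$ glues only a short band near the two marked points; at the level of moduli spaces it is a diffeomorphism $\mc{M}_{\Sigma^*,V^*}(G)\to\mc{M}_{\Sigma,V}(G)$, and the Courant morphism $R_{(P,Q)}$ either forgets or composes edge holonomies — it never imposes the relation $g_{e_1}g_{e_2}=1$ between two distinct edges, nor does it quotient by residual gauge transformations. Reassembling a triangulated surface from its triangles requires identifying \emph{entire} internal edges, which is sewing (a moment-map level set followed by a quotient, as in \cref{sec:Sew}), not fusion. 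Concretely, the source of your $R_{\mc T}=R_{(P_k,Q_k)}\circ\cdots\circ R_{(P_1,Q_1)}\circ R_{\Sigma',V'}$ remains $\prod_t\mc M_t(G)\cong G^{2|\mc T_2|}$, which is strictly larger than $\mc M_{\Sigma,V}(G)$; the induced map on bases is not the canonical diffeomorphism you assert. Moreover, in the generality of \cref{sec:PoisStrDirRed} the double $\mf d$ of a quasi-triangular Lie quasi-bialgebra is not of the form $\bar\g\oplus\g$, so the sewing Lagrangian $\mf l_{sew}$ is not even available — this is precisely why the theorem is phrased in terms of fusion compatibility rather than the sewing property of \cref{thm:Independence}. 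Your Pachner-move argument therefore has nothing to act on, and your worry about interior vertices is beside the point.

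Your uniqueness argument is essentially sound: every marked surface is obtained from a disjoint union of polygons by attaching bands, i.e.\ by a sequence of fusions, and \cref{prop:unPolygP} pins down the polygon case, so fusion compatibility determines the assignment. But for existence one must show the resulting morphism is independent of the chosen fusion presentation, and this is where the paper takes a different route: using \cref{prop:TwistQPoiss,prop:FusQPois} it translates the statement into the existence and uniqueness of a fusion-compatible quasi-Poisson $G^{V_\Gamma}$-structure with $\ol{\sigma_\Gamma}^!$-twisted moment map on $\mc M_{\Sigma,V}(G)$, which is Theorems~2 and~4 of \cite{LiBland:2012vo} (with \cref{prop:unPolygP} supplying the normalization on polygons). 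If you want a self-contained argument in the Manin-pair language, you would need to replace triangulations by handle decompositions and prove invariance under handle slides — not Pachner moves.
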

We defer the proof until the next section.
\begin{remark}
When $s\in S^2(\g)^G$ is non-degenerate (i.e. $\g$ is quadratic), then $\mf{d}=\g\oplus\bar\g$ is the pair groupoid and $$(\on{in}\oplus\on{out})^!(\on{s}\oplus\on{t}):\mf{d}^{V_\Gamma}\to (\bar\g\oplus\g)^{E_\Gamma}$$ is an isomorphism.
In this case, it is not difficult to convince oneself that the morphisms of Manin pairs 
$$R_{\Sigma,V}:(\mbb{T},T)\mc{M}_{\Sigma,V}(G)\dasharrow ((\bar\g\oplus\g)^{E_\Gamma},\g_\Delta^{V_\Gamma})\times G^{E_\Gamma}$$
described in \cref{thm:qPoiss-mod} satisfy the sewing property. That is they are precisely the ones described in \cref{thm:Independence}.
\end{remark}

\subsubsection{quasi-Hamiltonian $(\mf{d},\g)^{V_\Gamma}\times G^{E_\Gamma}$-manifolds and quasi-Poisson geometry}
Using the material in \cref{sec:qPoissonRed}, we intend to relate quasi-Hamiltonian $(\mf{d},\g)^{V_\Gamma}\times G^{E_\Gamma}$-spaces to the quasi-Poisson spaces studied in \cite{Alekseev00,LiBland:2012vo}.
A canonical choice of complement to $\g\subseteq\mf{d}$ is $\g_{\bar\Delta}^*:=(\on{s}+\on{t})^*(\g^*)\subseteq\mf{d}$, explicitly
$$\g_{\bar\Delta}^*=\big\{\big(\alpha-\frac{1}{2}s(\alpha,\cdot)\big)\mid \alpha\in \mf{p}\subseteq\mf{d}\big\}.$$ Similarly, $\mf{k}=(\g_{\bar\Delta}^*)^{V_\Gamma}$ is a canonical choice of complement to $\g^{V_\Gamma}\subseteq\mf{d}^{V_\Gamma}$.

Let $\ol{\sigma_\Gamma}:V_\Gamma\to V_\Gamma$ be the permutation given by walking along the graph $\Gamma$ \emph{against} the direction of each edge. Suppose that $(M,\rho,\pi)$ is a quasi-Poisson $G^{V_\Gamma}$-manifold, in the sense of \cite{Alekseev00,LiBland:2012vo} and $\mu:M\to G^{V_\Gamma}$ is a $\ol{\sigma_\Gamma}^!$-twisted moment map in the sense of \cite{LiBland:2012vo}. Let $\tilde\mu:M\to G^{E_\Gamma}$ be defined by
$$\tilde\mu(m)_e=\big(\mu(m)_{\on{in}(e)}\big)^{-1}, \quad m\in M, e\in E_\Gamma.$$

\begin{proposition}\label{prop:TwistQPoiss}
There exists a unique morphism of Manin pairs
$$R:(\mbb{T},T)M\dasharrow (\mf{d}^{V_\Gamma},\g^{V_\Gamma})\times G^{E_\Gamma}$$ over the map $\tilde\mu$
which is compatible with the $\g^{V_\Gamma}$ action on $M$ and such that $\mf{k}\circ R=\on{gr}(\pi^\sharp)$. 

Moreover, the converse holds whenever the action of $\g^{V_\Gamma}$ on $M$ integrates to an action of $G^{V_\Gamma}$.
\end{proposition}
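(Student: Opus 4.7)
The plan is to invoke the quasi-Poisson formalism of \cref{sec:qPoissonRed} with the canonical Lagrangian complement $\mf{k} = (\g_{\bar\Delta}^*)^{V_\Gamma}$ to $\g^{V_\Gamma}$ in $\mf{d}^{V_\Gamma}$. With this choice, the discussion of \cref{sec:qPoissonRed} sets up a tautological correspondence between morphisms of Manin pairs of the stated form and Hamiltonian quasi-Poisson $(\mf{d}^{V_\Gamma},\g^{V_\Gamma};\mf{k})$-manifolds, so the content of the proposition is to identify the latter with quasi-Poisson $G^{V_\Gamma}$-manifolds carrying a $\ol{\sigma_\Gamma}^!$-twisted moment map in the sense of \cite{LiBland:2012vo}.

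First I would establish uniqueness. By (m1), the action of $\g^{V_\Gamma}$ along $R$ is forced to equal $\rho$; the condition $\mf{k}\circ R = \gr(\pi^\sharp)$ then fixes how elements of $\mf{k}$ are related to $TM$; and support on $\gr(\tilde\mu)$ pins down the rest. Unwinding these as in the proof of \cref{thm:BivPartRed}, $R$ must be given by the explicit formula
$$\rho(\xi) + \pi^\sharp\alpha + \alpha \;\sim_R\; \xi + j(\rho^*\alpha), \qquad \xi\in\g^{V_\Gamma},\ \alpha\in T^*M,$$
where $j:(\g^{V_\Gamma})^*\to\mf{k}$ inverts the pairing induced by the metric on $\mf{d}^{V_\Gamma}$.

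For existence, I would take the relation $R$ defined by this formula as a Lagrangian subbundle of $\big(\mf{d}^{V_\Gamma}\times G^{E_\Gamma}\big)\times \ol{\mbb{T}M}$ supported on $\gr(\tilde\mu)$, and verify that it is Courant-involutive and satisfies (m1)--(m2). The Lagrangian and dimension-count conditions are automatic, so the substance lies in the integrability. The crucial translation is that the action of $\mf{d}^{V_\Gamma}$ on $G^{E_\Gamma}$ is encoded through the source/target maps of the groupoid $\mf{d}$, and the assignment $\tilde\mu(m)_e = \mu(m)_{\on{in}(e)}^{-1}$ is precisely the change of variables that converts this source/target description into a $G^{V_\Gamma}$-valued moment map twisted by $\ol{\sigma_\Gamma}^!$, because $\ol{\sigma_\Gamma}$ records exactly how edges enter and leave each vertex. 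Under this dictionary, the Schouten identity $[\pi,\pi]=\rho(\phi)$ (with $\phi\in\wedge^3\g^{V_\Gamma}$ the Cartan trivector built from $s$) matches the involutivity of sections of $R$ lying in $\mf{k}$, and the twisted moment map axiom matches the involutivity of sections lying in $\g^{V_\Gamma}$.

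The principal obstacle will be aligning the sign and twist conventions in the two frameworks, in particular the inversion in $\tilde\mu(m)_e = \mu(m)_{\on{in}(e)}^{-1}$ and the fact that $\ol{\sigma_\Gamma}$ traverses edges \emph{against} their orientation; I expect this to be a routine but careful bookkeeping exercise carried out by contracting the Courant bracket of two sections in $R$ against an arbitrary third section and matching terms. The converse statement (when $\rho$ integrates to $G^{V_\Gamma}$) follows by reading the same dictionary backward: given $R$, extract $\pi$ from $\mf{k}\circ R$, extract $\rho$ from (m1), and recover $\mu$ from $\tilde\mu$ by inverting componentwise; the integrability of $R$ then delivers back the twisted quasi-Poisson axioms.
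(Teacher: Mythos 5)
Your proposal is correct and follows essentially the same route as the paper: the paper's entire proof is a one-line appeal to \cite[Proposition 3.5]{Bursztyn:2009wi} (via the dictionary of \cref{sec:qPoissonRed}), and what you write out — the explicit formula for $R$ determined by $\rho$, $\pi^\sharp$, and the complement $\mf{k}=(\g_{\bar\Delta}^*)^{V_\Gamma}$, together with the matching of involutivity against the quasi-Poisson and twisted moment map axioms — is precisely the content of that cited result unwound by hand. The only caveat is that the moment map condition is already needed for $R$ to be supported on $\gr(\tilde\mu)$ (not only for involutivity), but this is the kind of bookkeeping you explicitly flag.
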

\begin{proof}
This follows from a direct application of \cite[Proposition 3.5]{Bursztyn:2009wi}. (cf. \cref{sec:qPoissonRed}).
\end{proof}

 In this sense there is a one-to-one correspondence between quasi-Poisson $G^{V_\Gamma}$-manifolds with $\ol{\sigma_\Gamma}^!$-twisted moment maps and quasi-Hamiltonian $(\mf{d}^{V_\Gamma},\g^{V_\Gamma})\times G^{E_\Gamma}$-manifolds.

\begin{proposition}\label{prop:FusQPois}
Suppose $(M,\rho,\pi)$ is a quasi-Poisson $G^{V_\Gamma}$-manifold with $\ol{\sigma_\Gamma}^!$-twisted moment map, and let
$$R:(\mbb{T},T)M\dasharrow (\mf{d}^{V_\Gamma},\g^{V_\Gamma})\times G^{E_\Gamma}$$
be the corresponding quasi-Hamiltonian $(\mf{d}^{V_\Gamma},\g^{V_\Gamma})\times G^{E_\Gamma}$-structure on $M$.
Let $R^*$ denote the fusion of $R$ at the ordered pair of vertices $(P,Q)\subset V_\Gamma$. Then the bivector field for the quasi-Poisson $G^{V_{\Gamma^*}}$-structure corresponding to $R^*$ is
$$\pi^*:=\pi+\rho(\tau),$$ 
where $\tau\in \wedge^2(\g^{V_\Gamma})$ is the insertion of $\psi\in\bigwedge^2(\g^{P}\oplus\g^{Q})$,
$$\psi=\frac{1}{2}\sum_{i,j}s^{ij}\,(\xi_i,0)\wedge(0,\xi_j)$$
 at the ${P,Q}$-th factors. Here  $s=\sum_{i,j}s^{ij}\,\xi_i\otimes \xi_j$ in some basis $\xi_i$ of $\g$.
\end{proposition}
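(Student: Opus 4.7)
The strategy is to realize fusion as a \emph{change of Lagrangian complement} to $\g^{V_\Gamma}\subset\mf d^{V_\Gamma}$, and then to identify the resulting twist with $\psi$.

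First, by \cref{prop:TwistQPoiss}, the bivector $\pi$ attached to $R$ is determined by $\mf k\circ R=\gr(\pi^\sharp)$, where $\mf k=(\g_{\bar\Delta}^*)^{V_\Gamma}$ is the canonical Lagrangian complement to $\g^{V_\Gamma}$ in $\mf d^{V_\Gamma}$. Similarly, the fused bivector $\pi^*$ is characterized by $\mf k^*\circ R^*=\gr((\pi^*)^\sharp)$ with $\mf k^*=(\g_{\bar\Delta}^*)^{V_{\Gamma^*}}$. Since $R^*=R_{(P,Q)}\circ R$, setting $\mf k':=\mf k^*\circ R_{(P,Q)}\subset\mf d^{V_\Gamma}$ reduces the problem to showing that $\mf k'$ is again a Lagrangian complement to $\g^{V_\Gamma}$ and that it differs from $\mf k$ by a twist whose associated element $\tau\in\wedge^2\g^{V_\Gamma}$ coincides with the insertion of $\psi$ at $(P,Q)$.

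Second, we compute $\mf k'$ directly from the definition of $R_{(P,Q)}$ and the groupoid structure on $\mf d$ recalled in \cref{rem:quasiTriStr}. An element $\xi\in\mf k'$ corresponds to some $\xi^*\in\mf k^*$ with $\xi\sim_{R_{(P,Q)}}\xi^*$. Away from $P$ and $Q$, this forces $\xi_v\in\g_{\bar\Delta}^*$, exactly as in $\mf k$. At $P,Q$, writing $\xi^*_{PQ}=\gamma-\tfrac12 s(\gamma,\cdot)$ for $\gamma\in\mf p$, the composability constraint $\on{t}(\xi_P)=\on{s}(\xi_Q)$ together with the identity $\xi_Q\circ\xi_P=\xi^*_{PQ}$ parametrizes composable pairs by independent $\alpha,\beta\in\mf p$ with $\alpha+\beta=\gamma$, giving
\begin{align*}
\xi_P&=\bigl(\alpha-\tfrac12 s(\alpha,\cdot)\bigr)-\tfrac12 s(\beta,\cdot),\\
\xi_Q&=\bigl(\beta-\tfrac12 s(\beta,\cdot)\bigr)+\tfrac12 s(\alpha,\cdot).
\end{align*}
The first summands lie in $\g_{\bar\Delta}^*$ (i.e.\ in $\mf k$), while the second summands lie in $\g$ (i.e.\ in $\g^{V_\Gamma}$). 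Thus $\mf k'=\{\zeta+T(\zeta):\zeta\in\mf k\}$, where $T:\mf k\to\g^{V_\Gamma}$ is supported on $\{P,Q\}$ and acts there by $(\alpha,\beta)\mapsto(-\tfrac12 s(\beta,\cdot),\tfrac12 s(\alpha,\cdot))$. Dimension-counting together with $\mf k'\cap\g^{V_\Gamma}=0$ (visible since $\alpha=\beta=0$ once the $\g_{\bar\Delta}^*$-parts vanish) confirms that $\mf k'$ remains a Lagrangian complement.

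Third, we identify the twist. Using the pairing $\la\mf k,\g^{V_\Gamma}\ra$ to regard $T=\tau^\sharp$ as a skew-symmetric element $\tau\in\wedge^2\g^{V_\Gamma}$, expansion in the dual bases $\{\xi_i\}\subset\g$ and $\{\xi^j\}\subset\mf p$ directly identifies $\tau$ with (a sign-fixed version of) the insertion of $\psi=\tfrac12\sum s^{ij}(\xi_i,0)\wedge(0,\xi_j)$ at the $(P,Q)$-slot. Finally, the change-of-complement principle --- which is precisely the algebraic content of the proof of \cref{thm:BivPartRed} --- shows that if $\mf k'=\{\zeta+\tau^\sharp(\zeta):\zeta\in\mf k\}$ then $\mf k'\circ R=\gr((\pi+\rho(\tau))^\sharp)$, yielding $\pi^*=\pi+\rho(\tau)$ as claimed.

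The main obstacle is the explicit calculation in the second step, in particular checking that the two cases in the definition of $R_{(P,Q)}$ (edge composition vs.\ edge deletion) produce the same twist, and that sign conventions work out so that $\tau$ is the insertion of $\psi$ rather than $-\psi$. Both cases in fact give the same answer because the parts of $R_{(P,Q)}$ involving $G^{E_\Gamma}\to G^{E_{\Gamma^*}}$ do not affect the composition $\mf k^*\circ R_{(P,Q)}$, which depends only on the groupoid structure at the fused vertex.
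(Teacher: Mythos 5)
Your proposal is correct and takes essentially the same route as the paper: the paper's proof likewise reduces everything to the identity $\mf{k}_*\circ R_{(P,Q)}=\{\xi+\tau^\sharp(\xi)\mid\xi\in\mf{k}\}$ and then composes with $R$ to obtain $\gr(\pi)+\gr(\rho(\tau))$, hence $\pi^*=\pi+\rho(\tau)$. The only difference is that you actually carry out the computation the paper calls ``straightforward''; your parametrization of composable pairs at the fused vertex and the resulting identification of the twist with the insertion of $\psi$ check out against the groupoid structure on $\mf{d}$ and the convention for $\tau^\sharp$.
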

Thus, (up to a sign difference) fusion in the sense of \cref{sec:fus} is precisely the same as fusion in the sense of \cite{Alekseev00,LiBland:2012vo}.
\begin{proof}
Let $\mf{k}_*=(\g_{\bar\Delta}^*)^{V_{\Gamma^*}}$. A straightforward computation shows that $$\mf{k}_*\circ R_{(P,Q)}=\{(\xi+\tau^\sharp(\xi))\mid\xi\in\mf{k}\}.$$
By definition, $\on{gr}\big((\pi^*)^\sharp\big)=\mf{k}_*\circ R^*$. Thus, we see from $$\mf{k}_*\circ R_{(P,Q)}\circ R=\on{gr}(\pi)+\on{gr}(\rho(\tau))$$
that $\pi^*=\pi+\rho(\tau)$.
\end{proof}

\begin{remark}[Sign differences]\label{rem:SignDifferences}
In \cite{LiBland:2012vo}, the bivector field resulting from fusing the ordered pair $(P,Q)$ is defined to be $\pi^*=\pi-\rho(\tau)$. This difference is essentially due to the fact that we orient our boundary graph to agree with the orientation of $\partial\Sigma$, whereas the opposite convention is used in \cite{LiBland:2012vo}.
\end{remark}

\begin{proof}[Proof of \cref{thm:qPoiss-mod}]
\Cref{prop:TwistQPoiss,prop:FusQPois} show that it suffices to prove the equivalent statement for quasi-Poisson $G^{V_\Gamma}$-structures.
However, \cite[Theorem~2]{LiBland:2012vo} and \cite[Theorem 4]{LiBland:2012vo} shows there exists a unique quasi-Poisson $G^{V_\Gamma}$-structure on $\mc{M}_{\Sigma,V}(G)$ with $\ol{\sigma_\Gamma}^!$-twisted moment map which is compatible with fusion (notice that \cref{prop:unPolygP} implies the first two properties of \cite[Theorem~2]{LiBland:2012vo} are automatically satisfied).
\end{proof}
\begin{corollary}
The proof of \cref{thm:qPoiss-mod} also shows that the bivector field on $\mc{M}_{\Sigma,V}$ is given by \cref{eq:pi_hpair}.
\end{corollary}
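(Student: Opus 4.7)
The plan is to prove the formula by reducing to the case of disjoint unions of polygons via the fusion construction, in parallel to the proof of \cref{thm:qPoiss-mod}. Concretely, I would verify two things: (i) that the formula \eqref{eq:pi_hpair} holds for any disjoint union of marked polygons, and (ii) that both sides of \eqref{eq:pi_hpair} transform in the same way under the fusion operation at an ordered pair of vertices $(P,Q)$. Together with the uniqueness statement underlying \cref{thm:qPoiss-mod}, these two facts force the formula on every marked surface.

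For step (i), note that by \cref{prop:unPolygP} the moment map $\mu$ is an embedding on a disjoint union of polygons, so the morphism of Manin pairs, and therefore the bivector field $\pi_{\mf k}$, is uniquely determined; on a single $n$-gon any two classes $a,b\in\Pi_1(\Sigma,V)$ may be represented by boundary-hugging paths whose interior intersections can be made empty. One checks directly (as in \cref{ex:PoisP2} for the $2$-gon, where $\pi_{P_2}=0$) that for such boundary representatives the intersection class $(a,b)$ contributes nothing to the right-hand side of \eqref{eq:pi_hpair}, matching the vanishing of $\pi$ in the image of $\mu$.

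For step (ii), I would combine \cref{prop:FusQPois} with the combinatorial description of $(a,b)$. Fusion at $(P,Q)$ identifies the boundary points $P$ and $Q$, and \cref{prop:FusQPois} states that the bivector field changes by $\rho(\tau)$, where $\tau$ is obtained from $\psi=\tfrac12\sum_{i,j}s^{ij}\,(\xi_i,0)\wedge(0,\xi_j)$ inserted at the $P,Q$-components. Translating this through the action $\rho$ at the marked points, one gets exactly the $\tfrac12(\Ad\otimes 1)s$-term associated with a single new boundary intersection with $\lambda(A)=1$ at the fused vertex. On the combinatorial side, fusion at $(P,Q)$ changes the intersection pairing $(a,b)$ precisely by adding one boundary intersection point at the newly identified vertex, with a sign determined by whether $a$ ends/starts at $P$ versus $Q$; this is a local check in a neighbourhood of the identified point. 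Matching these two changes term-by-term yields the compatibility of \eqref{eq:pi_hpair} with fusion.

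Having established (i) and (ii), the corollary follows: since every marked surface $(\Sigma,V)$ may be obtained from a disjoint union of polygons by iterated fusion (as used in the proof of \cref{thm:qPoiss-mod}), and both sides of \eqref{eq:pi_hpair} agree on polygons and transform identically under fusion, the formula persists on $\mc{M}_{\Sigma,V}(G)$.

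The main obstacle I expect is the bookkeeping in step (ii): carefully tracking signs, orientation conventions (cf. \cref{rem:SignDifferences}), and the factor $\lambda(A)$ distinguishing boundary and interior intersections, so that the newly created intersection at the fused vertex contributes \emph{exactly} $\rho(\tau)$ and not some multiple of it. A clean way to organise this is to fix a local model in which both $a$ and $b$ are short transverse arcs meeting only at the fused vertex, and to compute both the contribution to $(a,b)$ and the action of $\psi$ through $\rho_{P},\rho_{Q}$ by direct inspection; the general case then follows by bilinearity in $a$ and $b$ and the fact that both sides are additive under concatenation of paths away from the fused vertex.
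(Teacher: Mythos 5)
Your overall strategy --- verify \cref{eq:pi_hpair} on a generating class of surfaces and check compatibility with fusion, then invoke the uniqueness underlying \cref{thm:qPoiss-mod} --- is exactly the skeleton of the argument the paper relies on (deferred to \cite[Theorems 2--4]{LiBland:2012vo}). However, step (i) as you state it contains a genuine error. For an $n$-gon with $n\ge 3$, two adjacent boundary edges $a=e_1$ and $b=e_2$ with $\on{in}(e_1)=\on{out}(e_2)=v$ necessarily meet at the boundary point $v$; this is an intersection point with $\lambda(v)=1$, so $(a,b)=\pm[\bar e_1]\neq 0$ and the right-hand side of \cref{eq:pi_hpair} is $\pm\tfrac12(\Ad_{g_{e_1}^{-1}}\otimes 1)s\neq 0$. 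Correspondingly, the unique bivector field on the $n$-gon is \emph{not} zero for $n\ge 3$: the $3$-gon is obtained by fusing two $2$-gons at a pair of vertices, so by \cref{prop:FusQPois} its bivector is $0+0+\rho(\tau)=\rho(\tau)\neq 0$. Your assertion that ``the intersection class $(a,b)$ contributes nothing \dots matching the vanishing of $\pi$ in the image of $\mu$'' is therefore false on both sides for general polygons; only the $1$- and $2$-gons have vanishing bivector (\cref{ex:PoisP2}).

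The fix is to shrink the base case: verify \cref{eq:pi_hpair} only for disjoint unions of disks with one or two marked points, where both the bivector and the intersection pairing genuinely vanish (for the $2$-gon the two endpoint intersections of representatives of $e_1$ and $e_2$ cancel in sign), and then obtain every polygon --- and every marked surface --- by iterated fusion. This is consistent with the uniqueness statement actually invoked in the proof of \cref{thm:qPoiss-mod}, and it pushes all of the nontrivial content into your step (ii), where the single new boundary intersection created at the fused vertex must be matched against the term $\rho(\tau)$ of \cref{prop:FusQPois}, with the sign and orientation conventions of \cref{rem:SignDifferences} tracked carefully. With that correction your argument goes through and coincides with the paper's intended proof.
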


\appendix
\section{Proofs of reduction theorems}\label{app:PrtRedPf}

Before proving \cref{thm:PartRed,thm:ExactPartRed,thm:PartRedSplEx}, we first establish some lemmas.

\begin{lemma}\label{lem:EstMMP}
Suppose that $(\eta;Z)\in R_{\mf{c},S}$, where $\eta\in\mf{d}_\mf{c}$,  $Z\in TM_{\mf{c},S}$ and $R_{\mf{c},S}$ is as in \cref{thm:PartRed}. 
 Then
\begin{enumerate}
\item $\eta\in\h_\mf{c}$, and
\item $\eta=0$ only if $Z=0$.
\end{enumerate}
\end{lemma}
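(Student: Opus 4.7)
The plan is to unwind the definition $R_{\mf{c},S} = R_2 \circ R \circ R_1^\top$ to extract concrete witnesses, and then exploit the Lagrangian property of $R$ together with axioms (m1) and (m2) for the original morphism to deduce (1) and (2). Spelling out the composition (and using $Z \in TM_{\mf{c},S}$ to kill the cotangent parts on the $M_{\mf{c},S}$ side), the hypothesis produces a point $m \in \mu^{-1}(S)$ whose image $q_M(m)$ is the basepoint of $Z$, an element $\xi \in \mf{c}$ representing $\eta = \xi + \mf{c}^\perp$, and data $X_0 \in T_m\mu^{-1}(S)$, $\alpha \in \ann(T_m\mu^{-1}(S)) \subseteq T_m^*M$ with $X_0 + \alpha \sim_R \xi$ and $Z = (q_M)_*X_0$.

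For (1) I would pair $(X_0+\alpha,\xi) \in R$ against the element $(\rho_R(\hat\eta),\hat\eta) \in R$ for arbitrary $\hat\eta \in \h \cap \mf{c}$. Since $\hat\eta \in \mf{c}$ preserves $S$, its induced vector field $\rho_R(\hat\eta)$ preserves $\mu^{-1}(S)$, whence $\rho_R(\hat\eta)|_m \in T_m\mu^{-1}(S)$ and $\alpha(\rho_R(\hat\eta)|_m)=0$. Isotropy of $R$ then forces $\la\xi,\hat\eta\ra=0$ for every such $\hat\eta$, i.e.\ $\xi \in (\h\cap\mf{c})^\perp = \h + \mf{c}^\perp$. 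Combined with $\xi \in \mf{c}$ this gives $\xi \in (\h\cap\mf{c}) + \mf{c}^\perp$, hence $\eta = \xi + \mf{c}^\perp \in (\h\cap\mf{c})/(\h\cap\mf{c}^\perp) = \h_\mf{c}$.

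For (2), assume $\eta = 0$, i.e.\ $\xi \in \mf{c}^\perp$. The clean-intersection hypothesis (r2) gives $T_m\mu^{-1}(S) = \mu_*^{-1}(T_{\mu(m)}S)$, so the induced map $T_mM/T_m\mu^{-1}(S) \hookrightarrow T_{\mu(m)}N/T_{\mu(m)}S$ is injective and dually $\mu^*\colon \ann(T_{\mu(m)}S) \to \ann(T_m\mu^{-1}(S))$ is surjective. Hence I can write $\alpha = \mu^*\beta$ for some $\beta \in \ann(T_{\mu(m)}S)$. The standard compatibility of Courant relations with the dual anchor (the image of $\mbf{a}^*$ on $\ann$ of the support lies in $R$) gives $\mu^*\beta \sim_R \mbf{a}^*_\mbb{E}(\beta)$, and because $\beta$ annihilates $T_{\mu(m)}S \supseteq \rho(\mf{c})|_{\mu(m)}$ a direct metric computation places $\mbf{a}^*_\mbb{E}(\beta) \in \mf{c}^\perp$. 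Subtracting yields $X_0 \sim_R \zeta$ with $\zeta := \xi - \mbf{a}^*_\mbb{E}(\beta) \in \mf{c}^\perp$. Applying axiom (m1) of $R$ to $X_0 \in TM$ forces $\zeta \in \h$, so $\zeta \in \h \cap \mf{c}^\perp$; then axiom (m2) of $R$ (uniqueness of the lift in $TM$ of a given element of $\h$) identifies $X_0 = \rho_R(\zeta)|_m$. Since by (r2) the fibers of $q_M$ are exactly the $\h\cap\mf{c}^\perp$-orbits in $\mu^{-1}(S)$, the vector $X_0$ lies in a fiber and $Z = (q_M)_*X_0 = 0$.

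The main obstacle is step (2): obtaining the decomposition $\alpha = \mu^*\beta$ through cleanness, and then invoking the Courant-relation identity $\mu^*\beta \sim_R \mbf{a}^*_\mbb{E}(\beta)$, are what allow us to translate the original witness $\xi \in \mf{c}^\perp$ into an element $\zeta \in \h\cap\mf{c}^\perp$ so that (m2) for $R$ can be applied. Without these two ingredients, there is no way to convert the off-diagonal cotangent part $\alpha$ into an $\h$-valued contribution on the $\mbb{E}$-side, and the reduction of axiom (m2) from $R_{\mf{c},S}$ to $R$ itself breaks down.
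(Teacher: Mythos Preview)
Your proof is correct. For part (2) it coincides with the paper's argument essentially verbatim: both lift $\alpha$ through clean intersection to some $\beta\in\ann(T_{\mu(m)}S)$ (the paper's $\tilde\alpha$), observe that $\mbf{a}^*_{\mbb E}(\beta)\in\mf c^\perp$, subtract to obtain $(X_0,\zeta)\in R$ with $\zeta\in\mf c^\perp$, and then invoke (m1) and (m2) to force $\zeta\in\h\cap\mf c^\perp$ and $X_0=\rho_R(\zeta)$.

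The genuine difference is in part (1). The paper performs the subtraction of $\mbf{a}^*_{\mbb E}(\tilde\alpha)$ already at this stage: once $(\xi-\zeta_\alpha;X)\in R$ with $X\in TM$, axiom (m1) gives $\xi-\zeta_\alpha\in\h$ directly, so $\eta\in\h_{\mf c}$. This is a little more economical, since the same subtraction then feeds into part (2). Your pairing argument against $(\hat\eta,\rho_R(\hat\eta))$ for $\hat\eta\in\h\cap\mf c$ is a valid alternative that exploits the isotropy of $R$ rather than (m1); it has the minor advantage that you do not need to lift $\alpha$ to $\ann(TS)$ at all for part (1), only the tangency $\rho_R(\hat\eta)|_m\in T_m\mu^{-1}(S)$.
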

\begin{proof}
Let $\xi\in\mf{d}$, $X\in TM$ and $\alpha\in T^*M$ be chosen so that $(\xi;X+\alpha)\in R$ and
$$(\xi;X+\alpha)\sim_{(R_2\times R_1)}(\eta;Z).$$
Since $R_1=R_{q_M}\circ R_{i_M}^\top$, it follows that $\alpha\in\ann(T\mu^{-1}S)$.
Consequently, there exists $\tilde\alpha\in \ann(TS)$ such that $\alpha=\mu^*\tilde\alpha$.   Since $S$ is $\mf{c}$ invariant, $$\zeta_\alpha:=\mbf{a}^*\tilde\alpha\in\mf{c}^\perp.$$
Moreover, since $R$ is supported on the graph of $\mu$, $(\zeta_\alpha,\alpha)=\mbf{a}^*(-\tilde\alpha,\mu^*\tilde\alpha)\in R$. Thus 
\begin{equation}\label{eq:xizetalXinR}(\xi-\zeta_\alpha;X)\in R\end{equation} and
 \begin{equation}\label{eq:zetAl}(\xi-\zeta_\alpha;X)\sim_{(R_2\times R_1)} (\eta;Z).\end{equation}
 Since \labelcref{eq:PartRedMMPin} is a morphism of Manin pairs, axiom (m1) of \cref{def:MorpManPair} implies that 
 \begin{equation}\label{eq:xizetalinh}\xi-\zeta_\alpha\in \h.\end{equation} Thus \cref{eq:zetAl} implies that $\eta\in\h_\mf{c}$, establishing the first claim.
 
 Next, suppose $\eta=0$, then \cref{eq:zetAl} implies that $\xi-\zeta_\alpha\in\mf{c}^\perp$ in addition to \cref{eq:xizetalinh}. That is $\xi-\zeta_\alpha\in\mf{c}^\perp\cap \h$, and hence \cref{eq:rhoR,eq:xizetalXinR} imply that $X$ is tangent to the $\mf{c}^\perp\cap \h$. Therefore $Z=q_M(X)=0$, establishing the second claim.
\end{proof}

\begin{lemma}\label{lem:cleanComp}
Under the assumptions of \cref{thm:PartRed}, the Courant relation $$R_2\times R_1:(\mf{d}\times N)\times \ol{\mbb{T}M}\dasharrow(\mf{d}_\mf{c}\times N_{\mf{c},S})\times \ol{\mbb{T}M_{\mf{c},S}}$$ composes cleanly with the Dirac structure
$R\subseteq (\mf{d}\times N)\times \ol{\mbb{T}M}$.

Moreover, the composition
$$R_{\mf{c},S}:=R_2\circ R\circ R_1^\top$$ 
is a well defined subbundle of $(\mf{d}_\mf{c}\times N_{\mf{c},S})\times \ol{\mbb{T}M_{\mf{c},S}}$.
\end{lemma}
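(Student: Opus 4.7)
The plan is to decompose $R_{\mf{c},S} = R_2\circ R\circ R_1^\top$ and verify clean composition stage by stage, using the reductive data hypotheses (r1) and (r2).

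First I would check that $R_1$ and $R_2$ are themselves well-defined Courant relations. For $R_1 = R_{q_M}\circ R_{i_M}^\top$: condition (r2) gives that $\mu^{-1}(S)\subseteq M$ is a smooth submanifold (via the clean intersection of $\gr(\mu)$ with $S\times M$) and that $q_M\colon \mu^{-1}(S)\to M_{\mf{c},S}$ is a surjective submersion (since the $\h\cap\mf{c}^\perp$-orbits form a regular foliation). Hence the standard lifts $R_{i_M}$ and $R_{q_M}$ are well defined, and standard lifts along a submersion composed with an inclusion always compose cleanly. Condition (r1) handles $R_2$ in the same manner.

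Next I would verify clean composition of $R$ with $R_1^\top$, and then of $R_2$ with $R\circ R_1^\top$. The first reduces to checking that the fiber product of $R$ with $\mu^{-1}(S)$ over $M$ is smooth -- which follows from the clean intersection of $\gr(\mu)$ with $S\times M$ in (r2) together with the fact that $R$ is a smooth subbundle over $\gr(\mu)$ -- and then passing to the quotient by the submersion $q_M$. The second stage is analogous, using (r1) to restrict from $N$ to $S$ and then pass to the quotient $q_N$.

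The hard part, and what I expect to be the main obstacle, is showing that $R_{\mf{c},S}$ has locally constant rank, so that it is a smooth \emph{sub\-bundle} of $(\mf{d}_\mf{c}\times N_{\mf{c},S})\times\ol{\mbb{T}M_{\mf{c},S}}$ rather than merely a smooth submanifold. Here I would exploit the $\h\cap\mf{c}^\perp$-equivariance of every piece of the construction, together with \cref{lem:EstMMP}(2): that lemma ensures the projection from the intersection $R\diamond(R_2\times R_1)$ onto $R_{\mf{c},S}$ has kernel of dimension exactly $\dim(\h\cap\mf{c}^\perp)+\dim(\mf{c}^\perp)$ (coming from reduction on both the $M$-side and the $N$-side), giving constant rank by a dimension count. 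Combining this with cleanness from the previous steps then identifies $R_{\mf{c},S}$ as a smooth Lagrangian subbundle of the expected dimension, completing the proof.
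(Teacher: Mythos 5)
There is a genuine gap in the constant-rank step, which is the heart of the lemma. The paper establishes constant rank of $\on{ker}(R_2\times R_1)\cap R$ via the exact sequence
$$0\to\h\cap\mf{c}^\perp\to \on{ker}(R_2\times R_1)\cap R\to\ann(T\mu^{-1}S)\to 0,$$
so the relevant dimension is $\dim(\h\cap\mf{c}^\perp)+\operatorname{codim}_M\mu^{-1}(S)$, \emph{not} $\dim(\h\cap\mf{c}^\perp)+\dim(\mf{c}^\perp)$ as you claim. The two differ in general: the $N$-side contribution to the kernel is not all of $\mf{c}^\perp$ but only the image of $\ann(TS)$ under $\mbf{a}^*$, and establishing surjectivity onto $\ann(T\mu^{-1}S)$ requires the key observation (absent from your proposal) that for $\tilde\alpha\in\ann(TS)$ one has $\mbf{a}^*\tilde\alpha\in\mf{c}^\perp$ because $S$ is $\mf{c}$-invariant, and $\mbf{a}^*(-\tilde\alpha,\mu^*\tilde\alpha)\in R$ because $R$ is supported on $\gr(\mu)$. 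Moreover, \cref{lem:EstMMP}(2) does not deliver what you want: it says $\eta=0$ forces $Z=0$ in the \emph{reduced} relation, which is the injectivity condition (m2) needed in the proof of \cref{thm:PartRed}; it gives no control on the rank of the fibers of $(R_2\times R_1)\diamond R\to R_{\mf{c},S}$, and in particular no dimension count.

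A secondary concern is the stage-by-stage decomposition. Composing $R$ with $R_1^\top$ first produces an intermediate relation $\mbb{T}M_{\mf{c},S}\dasharrow\mf{d}\times N$ that is not supported on a graph and whose cleanness you have not verified; cleanness of a triple composition does not follow formally from cleanness of the individual stages. The paper sidesteps this by composing $R$ with the single product relation $R_2\times R_1$ and reducing everything to (i) constant rank of $\on{ker}(R_2\times R_1)\cap R$ and of its orthogonal, and (ii) cleanness of the base-level composition $\bigl(\gr(q_N\times q_M)\circ\gr(i_N\times i_M)^\top\bigr)\circ\gr(\mu)$, which is where hypotheses (r1) and (r2) actually enter. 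Your treatment of the base-level composition and of the final passage to the quotient via $\h\cap\mf{c}^\perp$-invariance is in the right spirit, but without the exact sequence above the argument does not close.
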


\begin{proof}
We begin by proving that the composition $(R_2\times R_1)\circ R$ is clean. For this, it is sufficient to show that 
\begin{enumerate}
\item the rank of the intersections $\on{ker}(R_2\times R_1)^\perp \cap R$ and $\on{ker}(R_2\times R_1) \cap R$ are constant, and 
\item the composition of the underlying relation of vector bundle bases, $$\big(\gr(q_N\times q_M)\circ \gr(i_N\times i_M)^\top\big)\circ \gr(\mu)$$ is clean.
\end{enumerate}
We now show that the rank of $\on{ker}(R_2\times R_1) \cap R$ is constant.
 We claim that the sequence
\begin{equation}\label{eq:ConstRankExSeq}0\to\h\cap\mf{c}^\perp\xrightarrow{\xi\to (\xi,\rho_R(\xi))} \on{ker}(R_2\times R_1) \cap R\xrightarrow{(\xi,X+\alpha)\to \alpha}\ann(T\mu^{-1}S)\to 0\end{equation}
is exact, where $\rho_R$ is defined in \cref{eq:rhoR}. 

First, the second map is surjective: for any $\alpha\in\ann(T\mu^{-1}S)$, let $\tilde\alpha\in \ann(TS)$ be chosen so that $\mu^*\tilde\alpha=\alpha$. Since $S$ is $\mf{c}$ invariant, $$\zeta_\alpha:=\mbf{a}^*\tilde\alpha\in\mf{c}^\perp.$$
Moreover, since $R$ is supported on the graph of $\mu$, $$(\zeta_\alpha,\alpha)=\mbf{a}^*(-\tilde\alpha,\mu^*\tilde\alpha)\in  \on{ker}(R_2\times R_1)\cap R.$$

Next, we prove exactness at $\on{ker}(R_2\times R_1) \cap R$. Suppose $(\xi,X)\in\on{ker}(R_2\times R_1)\cap R$. Since \cref{eq:PartRedMMPin} is a morphism of Manin pairs, $\xi\in\h$ and $X=\rho_R(\xi-\zeta_\alpha)$. Since $\xi\in \on{ker}(R_2)=\mf{c}^\perp$, we conclude that $\xi\in\h\cap\mf{c}^\perp$.

 This shows that the sequence \cref{eq:ConstRankExSeq} is exact.
 Since $\h\cap\mf{c}^\perp$ and $\ann(T\mu^{-1}S)$ are both of constant rank, so is  $\on{ker}(R_2\times R_1) \cap R$. Consequently $\big(\on{ker}(R_2\times R_1) \cap R\big)^\perp=\on{ker}(R_2\times R_1)^\perp + R$ is also of constant rank, and thus so is $\on{ker}(R_2\times R_1)^\perp \cap R$.
 
 Next, we need to show that the following composition of relations is clean:
$$\begin{tikzpicture}
\mmat{m}{M&N\\\tilde S&S\\M_{\mf{c},S}&N_{\mf{c},S}\\};
\path[->] (m-1-1) edge node {$\mu$} (m-1-2);
\path[->] (m-2-1) 
			edge node[swap] {$q_M$} (m-3-1)
			edge node {$i_M$} (m-1-1);
\path[->] (m-2-2) edge node {$q_N$} (m-3-2)
		edge node[swap] {$i_N$} (m-1-2);
\path[dashed,->] (m-3-1) edge node {$\tilde \mu$} (m-3-2);
\end{tikzpicture}$$
where, for brevity, we have introduced the notation $\tilde S:=\mu^{-1}(S)$.
 Since $\gr(\mu)$ intersects $S\times M$ cleanly, the composition $\gr(\mu\rvert_{\tilde S})=\gr(i_N)^\top\circ \gr(\mu\circ i_M)$ is clean
$$\begin{tikzpicture}
\mmat{m}{M&N\\\tilde S&S\\M_{\mf{c},S}&N_{\mf{c},S}\\};
\path[->] (m-1-1) edge node {$\mu$} (m-1-2);
\path[->] (m-2-1) edge node {$\mu\rvert_{\tilde S}$} (m-2-2)
			edge node[swap] {$q_M$} (m-3-1)
			edge node {$i_M$} (m-1-1);
\path[->] (m-2-2) edge node {$q_N$} (m-3-2)
		edge node[swap] {$i_N$} (m-1-2);
\path[dashed,->] (m-3-1) edge node {$\tilde \mu$} (m-3-2);
\end{tikzpicture}$$
Next, $$\gr(q_N\circ \mu\rvert_{\tilde S})\times \gr(q_M)^\top\subseteq N_{\mf{c},S}\times \tilde S\times \tilde S\times M_{\mf{c},S}$$ intersects $N_{\mf{c},S}\times \tilde S_\Delta \times M_{\mf{c},S}$ transversely, since $q_M$ and $q_N\circ \mu\rvert_{\tilde S}$ are both maps. Moreover, since $M_{\mf{c},S}$ is the set of $\h\cap \mf{c}^\perp$ orbits in $\tilde S$, while $N_{\mf{c},S}$ is the set of $\mf{c}^\perp$ orbits in $S$, the projection $$\bigg(\gr(q_N\circ \mu\rvert_{\tilde S})\times \gr(q_M)^\top\bigg)\cap \bigg(N_{\mf{c},S}\times \tilde S_\Delta \times M_{\mf{c},S}\bigg)\to \gr(\tilde \mu)$$ is a surjective submersion. Thus, by definition, $\gr(q_N\circ \mu\rvert_{\tilde S})$ composes cleanly with $\gr(q_M)^\top$.

It follows that $R_2\times R_1$ composes cleanly with $R$.

Finally, since $\{(\xi;\rho_R(\xi))\mid \xi\in\h\cap\mf{c}^\perp\}\subseteq R$, it follows that $R$ is $\h\cap\mf{c}^\perp$ invariant. Hence $R_{\mf{c},S}:=R_2\circ R\circ R_1^\top$ is a well defined subbundle of $$\big(\mf{d}_{\mf{c}}\times D_\mf{c}/G_\mf{c}\big)\times \overline{\mbb{T}Q}.$$
\end{proof}

We are now ready to prove \cref{thm:PartRed,thm:ExactPartRed}.

\begin{proof}[Proof of \cref{thm:PartRed}]
Since $N_{\mf{c},S}$ is the space of $\mf{c}^\perp$-orbits of $S$, while $M_{\mf{c},S}$ is the space of $\mf{c}^\perp\cap \h$ orbits of $\mu^{-1}(S)$,  the $\h$-equivariant map 
$$\mu:\mu^{-1}(S)\to S$$
descends to a define a unique map
$$\tilde\mu:M_{\mf{c},S}\to N_{\mf{c},S}.$$

The composition $R_{\mf{c},S}:=R_2\circ R\circ R_1^\top$ is supported on the graph of $\tilde\mu$. Thus \cref{lem:cleanComp,prop:CompCourRel} show that 
$$R_{\mf{c},S}:\mbb{T} M_{\mf{c},S}\dasharrow \mf{d}_\mf{c}\times N_{\mf{c},S}$$
is a Courant morphism.

Finally, \cref{lem:EstMMP} proves that \labelcref{eq:PartRedMMPout} satisfies the defining conditions for a morphism of Manin pairs.
\end{proof}

\begin{proof}[Proof of \cref{thm:ExactPartRed}]
We need to show that \cref{eq:PartRedMMPout} is a exact morphism of Manin pairs. We do this by first showing that $\mf{d}_\mf{c}\times N_{\mf{c},S}$ is an exact Courant algebroid along the image of $\mu(M)\cap S$, and next by showing that the anchor maps $R_{\mf{c},S}$ surjectively onto $T\gr(\mu_{\mf{c},S}:M_{\mf{c},S}\to N_{\mf{c},S})$.

The fact that $\mf{d}_\mf{c}\times N_{\mf{c},S}$ is exact follows from \cite[Theorem 3.3]{Bursztyn:2007ko}, but we include a proof here anyways.
We must show that
\begin{equation}\label{eq:dcExtSeq}0\to T^*N_{\mf{c},S}\xrightarrow{\mbf{a}^*}\mf{d}_\mf{c}\times N_{\mf{c},S}\xrightarrow{\mbf{a}}TN_{\mf{c},S}\to 0\end{equation}
is an exact sequence. By assumption, $\mf{c}$ acts transitively on $S$. Thus $\mf{d}_\mf{c}:=\mf{c}/\mf{c}^\perp$ acts transitively on $N_{\mf{c},S}:=S/\mf{c}^\perp$. It follows that the sequence \labelcref{eq:dcExtSeq} is exact at $TN_{\mf{c},S}$, and hence, by duality, also at $T^*N_{\mf{c},S}$. 

Next, \cref{eq:dcExtSeq} is exact at $\mf{d}_\mf{c}\times N_{\mf{c},S}$ if and only if $\on{ker}(\mbf{a})$ is isotropic. This, in turn, holds if and only if $\mf{c}\cap(\on{ker}(\mbf{a}_\mf{d})+\mf{c}^\perp)$ is isotropic, where $\mbf{a}_\mf{d}$ denotes the anchor map for $\mf{d}\times N$. But
 $$\big(\mf{c}\cap(\on{ker}(\mbf{a}_\mf{d})+\mf{c}^\perp)\big)^\perp=\mf{c}^\perp+\on{Im}(\mbf{a}_\mf{d}^*)\cap\mf{c}^\perp=\mf{c}\cap(\on{Im}(\mbf{a}_\mf{d}^*)+\mf{c}^\perp).$$
 Therefore, \labelcref{eq:dcExtSeq} is exact at $TN_{\mf{c},S}$ if $\on{Im}(\mbf{a}_\mf{d}^*)=\on{ker}(\mbf{a}_\mf{d})$, which holds whenever $\mf{d}\times N$ is exact (as it is along $\mu(M)$).
 
 Next we need to show that the anchor maps $R_{\mf{c},S}$ surjectively onto $T\gr(\mu_{\mf{c},S}:M_{\mf{c},S}\to N_{\mf{c},S})$.
 More precisely, for any $Z\in TM_{\mf{c},S}$, we must show there exists $\eta\in\mf{d}_\mf{c}$ and $\gamma\in T^*M_{\mf{c},S}$ such that $$(\eta,Z+\gamma)\in R_{\mf{c},S}.$$
 Let $X\in T\mu^{-1}(S)$ be chosen so that it maps to $Z$ under the quotient map $q_M:\mu^{-1}(S)\to M_{\mf{c},S}$. Since \cref{eq:PartRedMMPin} is exact, there exists $\alpha\in T^*M$ and $\xi\in \mf{d}$ such that $(\xi,X+\alpha)\in R$. Now $R$ is supported on the graph of $\mu$, so $\mbf{a}_\mf{d}(\xi)=\d\mu(X)\in TS$. Since $\mf{c}$ acts transitively on $S$, we must have $\xi\in\mf{c}+\on{ker}(\mbf{a}_\mf{d})$. Since $\mf{d}\times N$ is exact, $\on{ker}(\mbf{a}_\mf{d})=\on{Im}(\mbf{a}_\mf{d}^*)$. Thus there exists $\xi'\in\mf{c}$ and $\beta\in T^*N$ such that $\xi=\xi'+\mbf{a}_\mf{d}^*\beta$. Since $R$ is supported on the graph of $\mu$, $(\mbf{a}_\mf{d}^*\beta,\mu^*\beta)\in R$, and thus
 $$(\xi',X+\alpha-\mu^*\beta)\in R.$$
 Since $R$ is Lagrangian, pairing this element with $(\zeta,\rho_R(\zeta))\in R$, where $\zeta\in\h\cap\mf{c}^\perp$, we see that
 $$0=\la\xi',\zeta\ra=\la\alpha-\mu^*\beta,\rho_R(\zeta)\ra.$$
 Since $\zeta\in\h\cap\mf{c}^\perp$ was arbitrary, this shows that $\alpha-\mu^*\beta=q_M^*\gamma$ for some $\gamma\in T^*M_{\mf{c},S}$.
 
 Thus, we have shown that $$(\xi'+\mf{c}^\perp,Z+\gamma)\in R_{\mf{c},S},$$ where $\xi'+\mf{c}^\perp$ is the image of $\xi'$ under the quotient map $\mf{c}\to\mf{c}/\mf{c}^\perp$. Since $Z\in TM_{\mf{c},S}$ was arbitrary, we may conclude that \cref{eq:PartRedMMPout} is exact.
 
\end{proof}

\begin{proof}[Proof of \cref{thm:PartRedSplEx}]
First we show that the 2-form $\omega_\theta:=i^*_M\omega-\mu^*\la\theta,\vartheta_s-\frac{1}{2}s\circ\mbf{a}(\theta)\ra\in\Omega^2\big(\mu^{-1}(S)\big)$ is $\h\cap\mf{c}^\perp$-invariant and basic. 
To show invariance, note that (by definition) $(\xi,\rho_R(\xi))\in R$ for any $\xi\in\h$, and thus $R$ is $\h$-invariant. Since $s:TN\to\mf{d}\times N$ is an $\mf{d}$-invariant splitting, it follows that $R_{\mu,\omega}$ and hence $\omega\in\Omega^2(M)$ is $\h$-invariant. Additionally, $s,\mbf{a}$ and $\vartheta_s$ are $\mf{d}$-equivariant, while $\theta$ is $\mf{c}^\perp$-equivariant. Hence $\la\theta,\vartheta_s-\frac{1}{2}s\circ\mbf{a}(\theta)\ra$ is $\mf{c}^\perp$-invariant. It follows that the sum, $\omega_\theta$ is $\h\cap\mf{c}^\perp$-invariant.

The isomorphism $\mf{d}\times N\to \mbb{T}_\gamma N$ is given by $\xi\to \mbf{a}(\xi)+s^*(\xi)$, for all $\xi\in\mf{d}$. Thus, for $\xi,\eta\in\mf{d}$, we have
$$\la\xi,\eta\ra=\la\mbf{a}(\xi)+s^*(\xi),\mbf{a}(\eta)+s^*(\eta)\ra=\la s\circ\mbf{a}(\xi),\eta\ra+\la\xi,s\circ\mbf{a}(\eta)\ra.$$
Thus, since $\mf{c}^\perp$ is coisotropic, the assignment $\xi,\eta\to\la\xi,s\circ\mbf{a}\eta\ra$ defines a skew-symmetric form on $\mf{c}^\perp$.
%

Now suppose $\xi\in\mf{c}^\perp$, then 
\begin{align}
\notag&\iota_{\mbf{a}(\xi)}\la\theta,\vartheta_s-\frac{1}{2}s\circ\mbf{a}(\theta)\ra\\
\notag=&\la\xi,\vartheta_s-\frac{1}{2}s\circ\mbf{a}(\theta)\ra-\la\theta,s\circ\mbf{a}(\xi)-\frac{1}{2}s\circ\mbf{a}(\xi)\ra\\
\notag=&s^*\xi-\frac{1}{2}\la\xi,s\circ\mbf{a}(\theta)\ra-\la\theta,s\circ\mbf{a}(\xi)-\frac{1}{2}s\circ\mbf{a}(\xi)\ra\\
\label{eq:iotaRhoX2}
=&s^*\xi,
\end{align}
where the last line follows from the skew-symmetry of the assignment $\xi,\eta\to\la\xi,s\circ\mbf{a}(\eta)\ra.$
Now, for $\xi\in\h$, we have $\mu_*\rho_R(\xi)=\mbf{a}(\xi)$, and thus \cref{eq:RphiomeDef} implies that
\begin{equation}\label{eq:2FormIsBasic}
\rho_R(\xi)-\iota_{\rho_R(\xi)}\omega+\iota_{\rho_R(\xi)}\mu^*\la\theta,\vartheta_s-\frac{1}{2}s\circ\mbf{a}(\theta)\ra\sim_{R_{\mu,\omega}}\mbf{a}(\xi)+s^*(\xi),\quad \xi\in\h\cap\mf{c}^\perp.\end{equation}
Since \cref{eq:SpExPartRedMMPin} is a morphism of Manin pairs, and since $\mbf{a}(\xi)+s^*(\xi)\in E$ for any $\xi\in\h$, it follows that the left hand side of \cref{eq:2FormIsBasic} lies in $TM$. That is, 
$\iota_{\rho_R(\xi)}\omega_\theta=0$
 for any $\xi\in\h\cap\mf{c}^\perp$.
We conclude that there is a unique 2-form $\tilde\omega_\theta\in\Omega^2(M_{\mf{c},S})$ such that $q_M^*\tilde\omega_\theta=\omega_\theta$.

We define the Courant relation 
$$R_{\mf{c},s,\theta}:=\gr(\mbf{a}\oplus\tilde s_\theta^*)\circ R_\mf{c}\circ \gr(\mbf{a}\oplus s^*)^\top:\mbb{T}_\gamma N\dasharrow \mbb{T}_{\gamma_\theta}N_{\mf{c},S},$$ 
so that 
$$\gr(\mbf{a}\oplus\tilde s_\theta^*)\circ R_{\mf{c},S}=R_{\mf{c},s,\theta}\circ R_{\mu,\omega}\circ\gr(i_M)\circ\gr(q_M)^\top.$$

Now the definition of $s_\theta$ shows that for any $X\in\mf{X}(N_{\mf{c},S})$, we have
$$X^h+\iota_{X^h}\la\theta,\vartheta_s\ra\sim_{R_{\mf{c},s,\theta}} X.$$
Since $X^h$ is horizontal with respect to $\theta$ and $X=(q_N)_*X^h$, we also have
$$X^h+\iota_{X^h}\la\theta,\vartheta_s-\frac{1}{2}s\circ\mbf{a}(\theta)\ra\sim_{R_{\mf{c},s,\theta}} (q_N)_*X^h.$$
On the other hand, for $\xi\in\mf{c}^\perp$, we have $\mbf{a}(\xi)+s^*(\xi)\sim_{R_{\mf{c},s,\theta}} 0$, so
\cref{eq:iotaRhoX2} shows that
$$\mbf{a}(\xi)+\iota_{\mbf{a}(\xi)}\la\theta,\vartheta_s-\frac{1}{2}s\circ\mbf{a}(\theta)\ra\sim_{R_{\mf{c},s,\theta}}(q_N)_*\mbf{a}(\xi).$$
Therefore, for any $X\in TS$,
$$X+\iota_X\la\theta,\vartheta_s-\frac{1}{2}s\circ\mbf{a}(\theta)\ra\sim_{R_{\mf{c},s,\theta}}(q_N)_*X,$$
since this relation holds for both horizontal and vertical vector fields on the bundle $q_N:S\to N_{\mf{c},S}$.
This implies $R_{\mf{c},s,\theta}\circ R_{\mu,\omega}\circ\gr(i_M)=R_{\mu,\omega_\theta}$. Finally, since $\omega_\theta=q_M^*\tilde\omega_\theta$, we conclude that
$$R_{\mf{c},s,\theta}\circ R_{\mu,\omega}\circ\gr(i_M)\circ\gr(q_M)^\top=R_{\tilde\mu,\tilde\omega_\theta},$$
which proves the first part of the proposition.

Now suppose that $s(TS)\subseteq\mf{c}$, then $s^*(\mf{c}^\perp)\subseteq \on{ann}(TS)$, so $\vartheta_s=0$ and $s^*\theta=0$. Thus $\la\theta,\vartheta_s-\frac{1}{2}s\circ\mbf{a}(\theta)\ra=0$, and $\tilde\omega_\theta$ is independent of $\theta$. This concludes the proof of the proposition.
\end{proof}

\subsubsection{Proof of the commutativity of partial reduction}\label{app:ComParRedPrf}

\begin{proof}[Proof of \cref{prop:ComPartRed}]
By symmetry, it is sufficient to prove the first equality in \cref{eq:ComRed}. The key fact is that when $(\mf{c}_1\cap\mf{c}_2,S_1\cap S_2)$ is reductive data then $\mf{c}_1\cap\mf{c}_2$ is coisotropic. As a result, \begin{equation}\label{eq:ComRedSpecRel}\mf{c}_1^\perp\subseteq(\mf{c}_1\cap\mf{c}_2)^\perp\subseteq\mf{c}_1\cap\mf{c}_2\subseteq\mf{c}_2.\end{equation}
Thus, $$\mf{c}_{2,1}=(\mf{c}_1\cap\mf{c}_2)/(\mf{c}_1^\perp\cap\mf{c}_2)=(\mf{c}_1\cap\mf{c}_2)/\mf{c}_1^\perp,$$
and $$\mf{c}_{2,1}^\perp=(\mf{c}_1^\perp+\mf{c}_2^\perp)/\mf{c}_1^\perp.$$
Hence $\xi\in\mf{d}$, $\xi'\in \mf{d}_{\mf{c}_1}$ and $\xi''\in (\mf{d}_{\mf{c}_1})_{\mf{c}_{2\!,\!1}}$ satisfy
$$\xi\sim_{R_{\mf{c}_1}}\xi'\sim_{R_{\mf{c}_{2,1}}}\xi''$$ if and only if  
$$\xi\in\mf{c}_1,\quad \xi'=\xi+\mf{c}_1^\perp,\quad\xi'\in (\mf{c}_1\cap\mf{c}_2)+\mf{c}_1^\perp,\text{ and } \xi''=\xi'+\mf{c}_1^\perp+\mf{c}_2^\perp.$$
Equivalently,
$$\xi\in\mf{c}_1\cap\mf{c}_2,\quad \xi''=\xi+(\mf{c}_1\cap\mf{c}_2)^\perp, \text{ and }\xi'=\xi+\mf{c}_1.$$
So \begin{equation}\label{eq:LieAlgCoisRedCom}R_{\mf{c}_{2,1}}\circ R_{\mf{c}_1}=R_{\mf{c}_1\cap\mf{c}_2}.\end{equation}

Before continuing, we introduce some notation. Let $\mu_{\mf{c}_1,S_1}:M_{\mf{c}_1,S_1}\to N_{\mf{c}_1,S_1}$ be the function whose graph is the support of $R_{\mf{c}_1,S_1}$, let 
\begin{align*}
i_{N_1}&:S_1\to N,&i_{M_1}&:\mu^{-1}(S_1)\to M,\\
i_N&:S_1\cap S_2\to N,& i_M&:\mu^{-1}(S_1\cap S_2)\to M,\\
i_{N_{2\!,\!1}}&:S_{2,1}\to N_{\mf{c}_1,S_1}, & i_{M_{2\!,\!1}}&:\mu_{\mf{c}_1,S_1}^{-1}(S_{2,1})\to M_{\mf{c}_1,S_1}
\end{align*} 
denote the inclusions, and 
\begin{align*}
q_{N_1}&:S_1\to N_{\mf{c}_1,S_1},&q_{M_1}&:\mu^{-1}(S_1)\to M_{\mf{c}_1,S_1},\\
q_N&:S_1\cap S_2\to N_{\mf{c}_1\cap\mf{c}_2,S_1\cap S_2},& q_M&:\mu^{-1}(S_1\cap S_2)\to M_{\mf{c}_1\cap\mf{c}_2,S_1\cap S_2},\\
q_{N_{2\!,\!1}}&:S_{2,1}\to (N_{\mf{c}_1,S_1})_{\mf{c}_{2\!,\!1},S_{2\!,\!1}}, & q_{M_{2\!,\!1}}&:\mu_{\mf{c}_1,S_1}^{-1}(S_{2,1})\to (M_{\mf{c}_1,S_1})_{\mf{c}_{2\!,\!1},S_{2\!,\!1}}
\end{align*} 
denote the quotient maps.

Now \begin{multline*}(R_{\mf{c}_1,S_1})_{\mf{c}_{2\!,\!1},S_{2\!,\!1}}=\bigg(\big(R_{\mf{c}_{2,1}}\circ R_{\mf{c}_1}\big)\times \gr(q_{N_{2\!,\!1}})\circ \gr(i_{N_{2\!,\!1}})^\top\circ  \gr(q_{N_1})\circ \gr(i_{N_1})^\top\bigg)\\\circ R\circ \big(R_{q_{N_{2\!,\!1}}}\circ R_{i_{N_{2\!,\!1}}}^\top\circ  R_{q_{N_1}}\circ R_{i_{N_1}}^\top\big)^\top\end{multline*}

Now  \begin{equation}\label{eq:ComRedComp}x\sim_{  \gr(q_{N_1})\circ \gr(i_{N_1})^\top}y\sim_{\gr(q_{N_{2\!,\!1}})\circ \gr(i_{N_{2\!,\!1}})^\top} z\end{equation} if and only if
$$x\in S_1,\quad y=q_{N_1}(x),\quad y\in S_{2,1},\text{ and } z=q_{N_{2\!,\!1}}(y).$$
\Cref{eq:ComRedSpecRel} implies that the $\mf{c}_1\cap\mf{c}_2$ invariant manifold $S_1\cap S_2$ is also $\mf{c}_1^\perp$ invariant. Since $S_{2,1}$ is the set of $\mf{c}_1^\perp$ orbits in $S_1\cap S_2$, it follows that $y\in S_{2,1}$ if and only if $x\in S_1\cap S_2$. Thus, \cref{eq:ComRedComp} holds if and only if
$$x\in S_1\cap S_2,\quad z=q_{N_{2\!,\!1}}\circ q_{N_1} (x),\text{ and } y=q_{N_1}(x).$$
But 
$$q_{N_{2\!,\!1}}\circ q_{N_1}\rvert_{S_1\cap S_2}:S_1\cap S_2\to N_{\mf{c_1}\cap\mf{c}_2,S_1\cap S_2}$$
 is the quotient map for the $\mf{c}_1^\perp+\mf{c}_2^\perp$ action, i.e. $q_{N_{2\!,\!1}}\circ q_{N_1}\rvert_{S_1\cap S_2}=q_N$. Therefore 
 \begin{equation}\label{eq:RedComIQ1}\gr(q_{N_{2\!,\!1}})\circ \gr(i_{N_{2\!,\!1}})^\top\circ  \gr(q_{N_1})\circ \gr(i_{N_1})^\top=\gr(q_N)\circ \gr(i_N)^\top.\end{equation}
 
  A similar calculation shows that 
  $$\gr(q_{M_{2\!,\!1}})\circ \gr(i_{M_{2\!,\!1}})^\top\circ  \gr(q_{M_1})\circ \gr(i_{M_1})^\top=\gr(q_M)\circ \gr(i_M)^\top,$$ 
  and since the composition is clean we also have
  \begin{equation}\label{eq:RedComIQ2}R_{q_{N_{2\!,\!1}}}\circ R_{i_{N_{2\!,\!1}}}^\top\circ  R_{q_{N_1}}\circ R_{i_{N_1}}^\top=R_{q_M}\circ R_{i_M}^\top\end{equation}
  
  Combining \cref{eq:LieAlgCoisRedCom,eq:RedComIQ1,eq:RedComIQ2} shows that
  $$(R_{\mf{c}_1,S_1})_{\mf{c}_{2\!,\!1},S_{2\!,\!1}}=\big(R_{\mf{c}_1\cap\mf{c}_2}\times (\gr(q_N)\circ \gr(i_N)^\top)\big)\circ R\circ (R_{q_M}\circ R_{i_M}^\top)^\top=R_{\mf{c}_1\cap\mf{c}_2,S_1\cap S_2}.$$
\end{proof}

\bibliography{basicbib}{}
\bibliographystyle{amsplain.bst}

\end{document}